\DeclareMathOperator{\Bin}{\mathrm{Bin}}
\DeclarePairedDelimiter{\abs}{\lvert}{\rvert}
\newtheorem{theorem}{Theorem}
\newtheorem{lemma}[theorem]{Lemma}
\newtheorem{remark}[theorem]{Remark}
\newtheorem{corollary}[theorem]{Corollary}
\newtheoremstyle{TheoremNum}
{\topsep}{\topsep}              
{\itshape}                      
{}                              
{\bfseries}                     
{.}                             
{ }                             
{\thmname{#1}\thmnote{ \bfseries #3}}
\theoremstyle{TheoremNum}
\newcommand{\Exp}{\ensuremath{\textrm{Exp}}} 
\newcommand{\EE}{\ensuremath{\mathbb{E}}}     
\newcommand{\N}{\ensuremath{\mathbf{N}}}
\newcommand{\threshold}{\ensuremath{\mathfrak{a}_{\textsf{th}}}} 
\newcommand{\thebox}{\ensuremath{\Lambda}} 
\newcommand{\start}{\ensuremath{\mathfrak{a}}} 
\let\eps=\varepsilon
\title{Bootstrap percolation with inhibition}
\author[1]{Hafsteinn Einarsson\thanks{hafsteinn.einarsson@inf.ethz.ch; author
was supported by grant no.\ 200021 143337 of the Swiss National Science Foundation.}}
\author[1]{Johannes Lengler\thanks{johannes.lengler@inf.ethz.ch}}
\author[1]{Frank Mousset\thanks{frank.moussetf@inf.ethz.ch; author was
supported by grant no.\ 6910960 of the Fonds National de la Recherche,
Luxembourg.}}
\author[2]{Konstantinos Panagiotou\thanks{kpanagio@math.lmu.de}}
\author[1]{Angelika Steger\thanks{asteger@inf.ethz.ch}}
\affil[1]{Department of  Computer Science, ETH Zurich, Switzerland}
\affil[2]{University of Munich, Mathematics Institute, M\"{u}nchen, Germany}
\begin{document}
\maketitle

 

\begin{abstract}
  Bootstrap percolation is a prominent framework for studying the spreading of
  activity on a graph. We begin with an initial set of active vertices. The
  process then proceeds in rounds, and further vertices become active as soon as
  they have a certain number of active neighbors. A recurring feature in
  bootstrap percolation theory is an `all-or-nothing' phenomenon: either the size
  of the starting set is so small that the process stops very soon, or it
  percolates (almost) completely.

  Motivated by several important phenomena observed in various types of
  real-world networks we propose in this work a variant of bootstrap
  percolation that exhibits a vastly different behavior. Our graphs have two
  types of vertices: some of them obstruct the diffusion, while the others
  facilitate it. We study the effect of this setting by analyzing the process
  on Erd\H{o}s-R\'enyi random graphs. Our main findings are two-fold. First we
  show that the presence of vertices hindering the diffusion does not result in
  a stable behavior: tiny changes in the size of the starting set can
  dramatically influence the size of the final active set. In particular, the
  process is non-monotone: a larger starting set can result in a smaller final
  set. In the second part of the paper we show that this phenomenom arises from
  the round-based approach: if we move to a continuous time model in which
  every edge draws its transmission time randomly, then we gain stability, and
  the process stops with an active set that contains a non-trivial constant
  fraction of all vertices. Moreover, we show that in the continuous time model
  percolation occurs significantly faster compared to the classical round-based
  model. Our findings are in line with empirical observations and demonstrate
  the importance of introducing various types of vertex behaviors in the
  mathematical model.
\end{abstract}

\section{Introduction}

\emph{Bootstrap percolation} is a classical and well-studied mathematical
framework for the spreading of activity on a given graph. One starts with an
initial set of {\em active} vertices; this set may be chosen randomly or
deterministically. The process then proceeds in rounds, and further vertices
become active as soon as they have at least $k$ active neighbors, where $k \in
\mathbb{N}$ is a parameter of the process. The process is said to \emph{percolate}
if all vertices eventually turn active.

This process was first studied in 1979 on Bethe lattices by Chalupa, Leath and
Reich~\cite{chalupa1979bootstrap} to model
demagnetisation in magnetic crystals. If we choose the starting set randomly,
then one would expect that for many graphs there is a \emph{percolation threshold}
such that if the number of starting
vertices is above this threshold, then the process percolates, whereas
if it is below the threshold, it does not.
For example, such a threshold has been determined for finite grids by
Balogh, Bollob{\'a}s, Duminil-Copin and Morris~\cite{balogh2012sharp} and
for the Erd\H{o}s-R\'enyi random graph
by Janson, {\L}uczak, Turova, and Vallier~\cite{Janson2010}.
The problem has also been studied on various other
graphs~\cite{aizenman1988metastability,balogh2007bootstrap,balogh2006bootstrap}
and models, as for example in cellular automata~\cite{schonmann1992behavior,
Holroyd2007}. In all of these examples we observe an ``all-or-nothing''
phenomenon: if the size of the starting set is too small, then the process stops
rather quickly, and otherwise, it spreads to a level that includes (almost) all
vertices of the underlying graph.
In a way, this is not too surprising, as the likelihood that a vertex has $k$ active
neighbors increases with the total number of active vertices.

The aim of this paper is to study percolation processes with inhibition that
can restrict further dissemination of activity. As an example, consider the
diffusion of an innovation in a society. When a new product is introduced to
the market people may like and promote it or they may dislike and denigrate it.
If we now assume that people buy the product as soon as they get, say, $k$ more
positive feedbacks than negative ones from their neighbors, we have a bootstrap
percolation process with inhibition. Another example is a phenomenon called
\emph{input normalization} in neuroscience, cf.\ \cite{Carandini2012} for a
review. This refers to the following well-studied observation: when a signal
activates a small part of a local ensemble of neurons, the activity spreads
through to recurrent connections. But only up to a certain point. Then
inhibitory neurons are strong enough to stop a further spread of activitation.
In this way, very different input strengths can lead to similar levels of
activity that never surpass a certain upper bound. Such an effect has been
observed experimentally in many species~\cite{Heeger2009, Olsen2010,
Louie2011}.

In this paper, we consider the Erd\H{o}s-R\'enyi random graph $G_{n,p}$ with two types of
vertices: \textit{inhibitory} vertices (those obstructing the diffusion) and
\textit{excitatory} vertices (those facilitating the diffusion). First we show
that in the standard, round-based percolation model the introduction of
inhibitory vertices does not result in a stable behavior: either inhibition has
essentially no effect, or tiny changes in the size of the starting set can
dramatically influence the size of the final active set. In particular, the
process is non-monotone: a larger starting set can result in a smaller final
set. In the second part of the paper we show that such a phenomenon is actually
the result of the round-based approach: if we move to a continuous time model
in which every edge draws its transmission time randomly according to an
exponential distribution, then normalization is an automatic and intrinsic
property of the process. Moreover, we find that random edge delays accelerate
percolation dramatically: for transmission delays that are distributed
according to independent exponential distributions with mean one the time to
activate all vertices reduces from $\Theta(\log\log n)$ in the round based
model to $O(1)$ time in the asynchronous model.

\subsection{Model and results}   
The classical bootstrap percolation process on Erd\H{o}s-R\'enyi random graphs
was studied by Janson, {\L}uczak, Turova, and Vallier~\cite{Janson2010}.  This
process starts with a random active subset of size $\start$ of the vertices.
The process then proceeds in rounds, where in each round all non-active
vertices that have at least $k$ active neighbors also become active, and remain
so forever. A percolation process  \emph{percolates completely} if there is
some round in which every vertex is active. It \emph{almost percolates} if
there is some round in which $n-o(n)$ vertices are active. Janson et al.~showed
that for $1/n \ll p \ll n^{-1/k}$
there exists a threshold
\[
\threshold(n,p,k) = (1-1/k)\left(\frac{(k-1)!}{np^k}\right)^{1/(k-1)}
\]
such that for every $\eps >0$, a.a.s.\footnote{Asymptotically almost surely,
that is, with probability tending to one as $n \to \infty$.}~the process almost
percolates for $\start > (1+\eps)\threshold$, and a.a.s.~it stays forever at
$O(\threshold)$ active vertices if $\start < (1-\eps)\threshold$. They also
showed that for starting sets of size $\start = (1+\eps)\threshold$ the process
almost percolates in $\log_k \log (pn) +O(1)$ rounds, where the hidden constant
depends only on $\eps$. Observe that this result immediately carries over to
{\em directed} random graphs (in which activation requires $k$ active
in-neighbours) if we insert each directed edge with probability $p$.

We extend the standard model by allowing inhibitory vertices: we assume that
each of the $n$ vertices is  \emph{inhibitory} with probability $\tau$ and
\emph{excitatory} with probability $1-\tau$, independently. To be slightly more general,
we also introduce an additional parameter $0<\gamma \le 1/p$ and insert each directed edge with
excitatory (inhibitory) origin independently with probability $p$ (with
probability $\gamma p$). The process is similar to the classical bootstrap
percolation with one crucial difference: a previously inactive vertex $v$ turns
active in some round $i$ if after round $i-1$ the number of active excitatory
neighbors of $v$ \emph{exceeds} the number of its active inhibitory neighbors
by at least~$k$.
We generalize the threshold function $\threshold$ so that it now also depends on $\tau$:
\begin{equation}\label{eq:a0} \threshold = \threshold(n,p,k,\tau) = (1-1/k)\left(\frac{(k-1)!}{(1-\tau)^knp^k}\right)^{1/(k-1)}.\end{equation}
Note that the threshold does not depend on the inhibition excess $\gamma$. Note further that, compared to the threshold $\threshold(n,p,k,0)$ for the case without inhibition, there is an additional factor of $(1-\tau)^k$ in the denominator. This factor can be interpreted in the following way: clearly, a necessary condition for percolation is that the process percolates in the subgraph induced by the excitatory vertices, which has $(1+o(1))(1-\tau)n$ vertices a.a.s.. If we choose a random starting set of size $\start$, then this starting set will contain $(1+o(1))(1-\tau)\start$ excitatory vertices a.a.s.. Then, by the result for the process without inhibition, the process will not percolate if
\[ (1-\tau)\start \leq  (1-\eps)\threshold((1-\tau)n,p,k,0),\]
or, equivalently, if $\start \leq (1-\eps)\threshold(n,p,k,\tau)$. In particular, we can restrict our
analysis to the case $\start \geq \threshold$, since, by the results of Janson,
\L{}uczak, Turova and Vallier~\cite{Janson2010}, the process with $\start \leq
(1-\eps)\threshold$ will stop with $O(\start)$ active vertices.

Our results for this process are collected in the following theorem.

\begin{theorem}\label{thm:synchron}
  Let $\eps, \tau, \gamma >0$, $k\ge 2$ and assume $1/n \ll p \ll n^{-1/k}$ and
  $\start \geq (1+\eps)\threshold(n,p,k,\tau)$. Then the  bootstrap percolation
  process with inhibition a.a.s.\ satisfies the following.
  \begin{enumerate}[(i)]
    \item For $\tau < 1/(1+\gamma)$ the process almost percolates
      in $\log_k\log_{(\start/\threshold)}(np)+O(1)$ rounds (as it does in the
      case without inhibition).
      If, additionally, $p=\omega(\log n / n)$, then the process percolates completely
      in the same number of rounds.
    \item For $\tau > 1/(1+\gamma)$  and $\start \ge (\log n)^{2+\eps}$ and $p
      = \omega(\log n/n)$ the process is \emph{chaotic} in the following sense:
      for every constant $C_1 >0$ there exists a constant $C_2> C_1$ such that
      for every target function $f$ with $(\log n)/p \ll f(n) \ll n$, there
      exists a function $c\colon \mathbb{N}\to [C_1,C_2]$ such that if one starts the
      process with $\start = \lfloor c(n) \threshold\rfloor$ vertices, then it
      stops with $a^* =(1+o(1))f(n)$ active vertices a.a.s..
  \end{enumerate}
\end{theorem}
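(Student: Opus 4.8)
The plan is to track $a_i:=|A_i|$ through a deterministic ``fluid'' recursion that is valid while $a_ip=o(1)$, and then to analyse an ``endgame'' once $a_ip$ reaches order $1$, where the sign of the drift $1-\tau(1+\gamma)$ decides everything. Condition on $A_i$: a vertex $v\notin A_i$ joins $A_{i+1}$ iff $X-Y\ge k$, where $X\sim\Bin(a_i,(1-\tau)p)$ and $Y\sim\Bin(a_i,\gamma\tau p)$ are (essentially) independent. While $a_ip=o(1)$ the event $\{X-Y\ge k\}$ is dominated by $\{X=k,\ Y=0\}$, so $\Pr[X-Y\ge k]=(1+o(1))\frac{((1-\tau)a_ip)^k}{k!}$; writing $c_i:=a_i/\threshold$ with $\threshold=\threshold(n,p,k,\tau)$, the normalisation is designed precisely so that Chernoff bounds yield $c_{i+1}=(1+o(1))\phi(c_i)$ with $\phi(c):=c+\beta c^k$ for the explicit constant $\beta=\beta(k)=k^{-1}(1-1/k)^{k-1}<1$ (the factors $(1-\tau)^k$ and $np^k$ cancel against $\threshold$). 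Since $\start\ge(1+\eps)\threshold$ gives $c_0\ge1+\eps$ and $\phi(c)>c$, the sequence grows, and once $c_i$ is large $c_{i+1}\approx\beta c_i^k$ forces $\log c_i\approx\mathrm{const}\cdot k^i$; hence $a_ip$ reaches order $1$ after $\log_k\log_{(\start/\threshold)}(np)+O(1)$ rounds. (For $\start<(1-\eps)\threshold$, $\phi$ has an attracting fixed point of order $1$ and the process halts at $O(\threshold)$ active vertices, as in~\cite{Janson2010}; this is why we restrict to $\start\ge(1+\eps)\threshold$.)

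For~(i), $\tau<1/(1+\gamma)$ makes the drift positive, and the key fact is that $q(\lambda):=\Pr[\Pois((1-\tau)\lambda)-\Pois(\gamma\tau\lambda)\ge k]$ is bounded away from $0$ for $\lambda$ bounded away from $0$ and tends to $1$ as $\lambda\to\infty$. So once the fluid phase has brought $a_ip$ to order $1$, one further round gives $a_{i+1}=\Omega(n)$, hence $a_{i+1}p=\omega(\log n)$ when $p=\omega(\log n/n)$; then $X-Y$ has mean $(1+o(1))(1-\tau(1+\gamma))a_{i+1}p$ and standard deviation $O(\sqrt{a_{i+1}p})$, so $\Pr[X-Y<k]\le e^{-\Omega(a_{i+1}p)}=o(1/n)$ and a union bound over all vertices gives complete percolation in one more round; without the assumption on $p$ a first-moment bound still leaves only $o(n)$ inactive vertices. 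Altogether the process almost (resp.\ completely) percolates in $\log_k\log_{(\start/\threshold)}(np)+O(1)$ rounds.

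For~(ii), $\tau>1/(1+\gamma)$: the drift is negative and $q(\lambda)$ is small \emph{both} for $\lambda\to0$ (of order $\lambda^k$) and for $\lambda\to\infty$, where a large-deviation estimate for Poisson differences gives $q(\lambda)\le e^{-\kappa\lambda}$ with $\kappa=(\sqrt{\gamma\tau}-\sqrt{1-\tau})^2>0$ (using $\gamma\tau>1-\tau$). This yields a \emph{stability lemma}: if some round has $a_i=o(n)$ and $a_ip\ge C\log n$ with $C=C(\tau,\gamma)$ large enough that $\kappa C>1$, then the expected number of activations in the next round is at most $ne^{-\kappa a_ip}=o(1)$, so a.a.s.\ the process halts with $a^\ast=a_i$. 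Given a target $f$ with $(\log n)/p\ll f\ll n$, put $s_\ast:=\bigl(k!\,f/((1-\tau)^k n)\bigr)^{1/k}$, so $s_\ast=o(1)$ and $s_\ast\gg((\log n)/(np))^{1/k}$: if any round has $a_ip=(1+o(1))s_\ast$, then the next round has exactly $a_{i+1}=(1+o(1))f$ active vertices (again the $X=k,Y=0$ term, and $a_i=o(1/p)\ll f$) with $a_{i+1}p=(1+o(1))fp\ge C\log n$, after which the stability lemma freezes the process at $a^\ast=(1+o(1))f$. It therefore suffices to choose $c(n)\in[C_1,C_2]$ so that the fluid trajectory $\hat c_0=c(n)$, $\hat c_{i+1}=\phi(\hat c_i)$, meets $t_\ast:=s_\ast(np)^{1/(k-1)}$ within a factor $1+o(1)$ at some round. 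Since $t_\ast\to\infty$ but $t_\ast=o((np)^{1/(k-1)})$, this target lies inside the fluid window; each $\hat c_j$ is continuous and increasing in its starting value, and provided $C_2$ is chosen large enough in terms of $C_1$ and $k$ (say $C_2\ge\beta(\max(C_1,1+\eps))^{k}$), the union $\bigcup_j[\hat c_j(\max(C_1,1+\eps)),\hat c_j(C_2)]$ is an interval containing $t_\ast$; the intermediate value theorem then supplies a suitable real starting value, and rounding it to $\lfloor c(n)\threshold\rfloor$ perturbs $\hat c_j$ by only $o(t_\ast)$ since $\threshold\ge(\log n)^{2+\eps}/C_2$ makes the grid fine relative to the (only polynomially large) sensitivity $\partial\hat c_j/\partial\hat c_0$.

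The crux — and where the hypothesis $\start\ge(\log n)^{2+\eps}$ is used, rather than just $\start\ge(1+\eps)\threshold$ — is controlling error propagation through the strongly expanding map $\phi$: a relative error at one round (from integrality, or from that round's stochastic fluctuation, which is $O((\log n)^{-(1+\eps/2)})$ because the relevant expectations are at least $\threshold\ge(\log n)^{2+\eps}/C_2$) is amplified by roughly a factor $k$ per subsequent round, hence by $(\log n)^{O(1)}$ over the $O(\log\log(np))$ rounds of the fluid phase. One must check these stay $o(1)$ at the end, so that the random process really does track the chosen fluid trajectory to within $1+o(1)$; only then do $a_jp=(1+o(1))s_\ast$ and hence $a^\ast=(1+o(1))f$ follow. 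Carrying out this accounting — together with the standard but delicate bookkeeping of which edges are exposed when, so that the round-to-round activation counts may be treated as sums of independent indicators — is the main technical work.
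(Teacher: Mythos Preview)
Your approach is essentially the paper's: track a deterministic recursion while $a_ip=o(1)$, then analyse the endgame via the sign of the drift, and for~(ii) combine a large-deviation stability lemma (once $a_ip\ge C\log n$ with $\tau>1/(1+\gamma)$ the process halts) with an intermediate-value argument over the starting size, together with an error-propagation bound exploiting $\threshold\ge(\log n)^{2+\eps}$. The structure, including the Skellam large-deviation rate $\kappa=(\sqrt{\gamma\tau}-\sqrt{1-\tau})^2$, matches.

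There is one slip worth flagging. Your fluid map $\phi(c)=c+\beta c^k$ is not the right one; the paper's recursion is $\hat a_{t+1}=\hat a_0+(1-\tau)^knp^k\hat a_t^k/k!$, i.e.\ $c_{i+1}=c_0+\beta c_i^k$. The discrepancy comes precisely from the dependence you defer to ``bookkeeping'': after conditioning on $v\notin A_i$, the pair $(X,Y)$ is \emph{not} distributed as you claim, because (while inhibition is negligible) every vertex of $A_i\setminus A_0$ already has $N^+_{a_i}\ge k$, so the naive unconditional count overcounts the newly activated vertices by exactly $a_i-\start$. The paper avoids conditioning on $A_i$ altogether by sandwiching $a_{i+1}$ between $\start+L_{a_i}$ and $\start+U_{a_i}$, where $L_s$ and $U_s$ count \emph{all} $v>\start$ satisfying the monotone events $\{N^+_s(v)=k,\,N^-_s(v)=0\}$ and $\{N^+_s(v)\ge k\}$; these are honest sums of independent indicators in the paper's probability-space construction, so Chernoff applies cleanly and yields the constant term $c_0$ rather than $c_i$. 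This correction does not change the doubly-exponential growth rate or the IVT argument, so your conclusions survive, but it shows that the ``bookkeeping'' is not merely technical: carried out correctly it changes the recursion you iterate, and your concentration claim $c_{i+1}=(1+o(1))\phi(c_i)$ would be false as stated.
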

In other words, if $\tau < 1/(1+\gamma)$ 
then inhibition has basically no effect on
the outcome of the process: the process behaves similar as in the classical
case with $(1-\tau)n$ vertices and no inhibition. On the other hand,  if $\tau
> 1/(1+\gamma)$, then the outcome of the process depends in a rather unstable
way on the size of the initially active set: by changing the size of the
starting set by a constant factor, we can change the number of active vertices
at the end of the process drastically; in particular, the number of active vertices
at the end of the process is non-monotonic in the size of the starting set. 
We remark that the condition $\threshold\geq (\log n)^{2+\eps}$
is essentially best possible: for $\threshold\leq (\log n)^{2-\eps}$ the
statement of the theorem is not true, cf.\ the argument following
Theorem~\ref{thm:concentration} on page~\pageref{thm:concentration}. If one
weakens the conditions of (ii) by just requiring
$\start \ge (1+\eps)\threshold$ or $p\gg 1/n$, then it is still true that
the process 
is unstable, but one cannot predict where it ends.

%

A main feature of the classical bootstrap percolation processes is that
activation takes place in rounds. This phenomenon can be interpreted in the following way: when
a vertex turns active, then this information needs exactly one time unit to reach
its neighbors. In the second part of our paper we drop this assumption
and replace this synchronous model with an asynchronous one: each edge
independently draws a random transmission delay $\delta$  from an exponential
distribution with expectation one, and the information that the neighbor is
active requires time $\delta$ to travel from one vertex to another. The
activation rule itself remains unchanged: a vertex turns active as soon as it
is aware that $k$ of its neighbors are active (in the process without
inhibition), or as soon as it has notice of $k$ more active excitatory than
inhibitory neighbors (in the general case). Although the expected transmission
delay is one -- as it is deterministically in the synchronous model -- it turns
out that quantitatively and qualitatively the percolation process changes
rather dramatically.

\begin{theorem}\label{thm:async-percolation}
  Let $\eps, \tau, \gamma >0$, $k\ge 2$ and assume $1/n \ll p \ll n^{-1/k}$ and
  $\start \geq (1+\eps)\threshold(n,p,k,\tau)$.  Then there exists a constant
  $T=T(\eps,k)\geq 0$ such that the asynchronous bootstrap percolation process
  with  $\start \geq (1+\eps)\threshold$ and $n^{-1}\ll p\ll n^{-1/k}$
  a.a.s.\ satisfies the following.
  \begin{enumerate}[(i)]
    \item If $\tau <1/(1+\gamma)$, then the process almost percolates
      in time $T$. If, additionally, $p=\omega(\log n / n)$, then the process percolates
      completely within time $T$.
    \item If $\tau \geq 1/(1+\gamma)$ and if $\start = o(n)$, then a.a.s.~there are
      $(1-\tau)^kn/(\gamma\tau)^k + o(n)$ active vertices at time $T$. Also, the process stops
      with $(1-\tau)^k n/(\gamma\tau)^k + o(n)$ active vertices.
  \end{enumerate}
\end{theorem}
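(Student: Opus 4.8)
\emph{A single-vertex ballot estimate.}
Fix a vertex $v$. The key point is that the transmission delays on the in-edges of $v$ are i.i.d.\ $\Exp(1)$ and independent of everything else, since the process only ever uses the in-edges of the \emph{other} vertices. Condition on the active set at the end of the process and on all activation times. Then the signals reaching $v$ form a point process whose order is, up to lower-order terms, a uniform random interleaving of the $m_E$ excitatory and the $m_I$ inhibitory signals coming from $v$'s active in-neighbours (the delays are i.i.d.\ regardless of type, and a vertex's type is, asymptotically, independent of its activation time), and $v$ becomes active precisely when the running difference ``excitatory received $-$ inhibitory received'' first reaches $k$. Since a vertex is excitatory with probability $1-\tau$ independently of whether it is active, and an excitatory (resp.\ inhibitory) vertex sends $v$ an edge with probability $p$ (resp.\ $\gamma p$), we have $m_E/m_I\to(1-\tau)/(\gamma\tau)$. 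Hence, by the gambler's-ruin estimate, whenever $m_E+m_I\to\infty$,
\[
  \Pr[\,v\text{ is ever active}\,]=(1+o(1))\min\Bigl\{1,\bigl(\tfrac{1-\tau}{\gamma\tau}\bigr)^k\Bigr\},
\]
\emph{independently of the size of the active set}, because the ratio $(1-\tau):\gamma\tau$ is the same at every density. This is the origin of the threshold $\tau=1/(1+\gamma)$ (zero drift of the walk) and of the constant $((1-\tau)/(\gamma\tau))^k$.

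\emph{Take-off in constant time.}
The quantitative core is: whenever $\start\ge(1+\eps)\threshold$, the active set reaches size $\gg 1/p$ within a time $T_0=T_0(\eps,k)=O(1)$. As long as $|A(t)|=o(1/p)$, a vertex has $o(1)$ active in-neighbours in expectation, so (up to lower-order terms) it can only become active via $\ge k$ active excitatory in-neighbours with no interfering inhibitory signal, and the process agrees with the inhibition-free continuous process. For the latter, conditioning on the activation times and using that the excitatory signals received by a vertex by time $t$ form a sum of independent indicators, one obtains for $a(t):=|A(t)|$ a self-consistent differential inequality of roughly the form $a'(t)\gtrsim c_k\,n\bigl(p\cdot\tfrac1n\int_0^t a\bigr)^{k-1}a(t)$, whose solution started at $a(0)=(1+\eps)\threshold$ blows up in finite time; using $\threshold^{k-1}\asymp 1/(np^k)$ an elementary computation shows the blow-up time is $\Theta_{\eps,k}(1)$. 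This is exactly where the speed-up from $\Theta(\log\log n)$ to $O(1)$ comes from: in continuous time the number of signals a vertex has received depends on the \emph{integrated} active mass $\int_0^t|A|$ rather than on $|A|$ at integer times, so the slow round-by-round iteration $c_{i+1}\approx c_i^k$ of the synchronous analysis is replaced by genuine finite-time blow-up of a differential inequality.

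\emph{Conclusion of (i).}
Let $\tau<1/(1+\gamma)$. After the take-off every vertex has $\gg 1$ active excitatory in-neighbours, the ballot walk has positive drift, and the single-vertex estimate says that $n-o(n)$ vertices will ever be active; a relaxation argument (the continuous-time analogue of the fact that classical bootstrap percolation needs only $O(1)$ further rounds once moderately dense, each such round now occupying $O(1)$ time) shows that $n-o(n)$ of them are active already by a constant time $T$, and this persists since the active set only grows --- so the process almost percolates by time $T$. If additionally $p=\omega(\log n/n)$, then a.a.s.\ every vertex has $(1\pm o(1))(1-\tau)|A|p$ active excitatory and $(1\pm o(1))\gamma\tau|A|p$ active inhibitory in-neighbours once $|A|$ is large, so its net excess $(1-(1+\gamma)\tau)|A|p\to\infty$ and it is activated within $O(1)$ further time; hence complete percolation by time $T$. (For $p\gg 1/n$ outside this range the exceptional $o(n)$ vertices are essentially those with fewer than $k$ excitatory in-neighbours.)

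\emph{Conclusion of (ii), and the main obstacle.}
Let $\tau\ge 1/(1+\gamma)$ and $\start=o(n)$. The first two steps give the take-off, and once $|A|\gg 1/p$ the single-vertex estimate says each vertex lies in the final active set $A^*$ with probability $(1+o(1))\alpha^*$, $\alpha^*:=((1-\tau)/(\gamma\tau))^k$, independently of $|A^*|$. Concentration then forces $|A^*|=(1+o(1))\alpha^* n$ --- the only self-consistent value --- and since $A(t)\subseteq A^*$ at all times the active set never exceeds $(1+o(1))\alpha^* n$ either, hence is monotone, bounded by, and converging to $(1+o(1))\alpha^* n$; the take-off and relaxation arguments give that $(1-o(1))\alpha^* n$ vertices are active by a constant $T$. (The hypothesis $\start=o(n)$ keeps the starting set below the equilibrium $\alpha^* n$.) For the statement that the process \emph{stops} with $(1-\tau)^k n/(\gamma\tau)^k+o(n)$ active vertices one uses that, for a non-positive-drift walk, the conditional first-passage time to $+k$ has exponential tails and that the first half of the i.i.d.\ $\Exp(1)$ arrivals at any vertex occur within time $\ln 2+o(1)$, so that only $o(n)$ activations happen after a suitable constant $T$. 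The main obstacle throughout is making the take-off step rigorous: $|A(t)|$ is a complicated function of all edge and delay variables and the activation events of different vertices are correlated. The standard cure is a two-round exposure --- first reveal enough randomness to pin down a large active core, then reveal the remaining edges and delays to analyse the crossing events of the other vertices essentially independently --- which is also what makes the single-vertex estimate of the first step legitimate.
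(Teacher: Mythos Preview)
Your high-level picture is right --- constant-time take-off, then a biased random walk per vertex with bias $\beta=(1-\tau)/(1-\tau+\gamma\tau)$, then gambler's ruin --- but two steps are not proofs as written.

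The more serious gap is the upper bound in (ii). You write ``Concentration then forces $|A^*|=(1+o(1))\alpha^* n$'', but the events $\{v\in A^*\}$ are not independent: whether $v$'s walk ever hits $k$ may depend on signals arriving long after the first few, and those come from vertices that were activated (or not) by \emph{their} walks. The paper separates this into two pieces. First (Lemma~\ref{lemma:walks}), it shows that each of the first $C_0$ incoming signals at a typical vertex is excitatory with probability in $\beta\pm\eta$, \emph{uniformly in the state of the rest of the process}, so Chernoff applies and these $C_0$ signals already activate $\beta^k n/(1-\beta)^k\pm\zeta n$ vertices while leaving most surviving walks near $(2\beta-1)C_0<0$. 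Second (Lemma~\ref{lemma:upperbound}), one must show that only $o(n)$ vertices are activated by a signal after their $C_0$-th; this is the delicate step, because once $Z^{(v)}_i$ has drifted above its mean line, $S^+(v,t)$ is atypically large and the conditional probability that the next signal is excitatory is no longer close to $\beta$. The paper controls this by tracking excursions of $Z^{(v)}$ above the line $(1-\zeta)(2\beta-1)i$ and summing a geometric tail over the excursion starting points $i\ge C_0$. Your ``exponential tails for first passage in a non-positive-drift walk'' is the right ingredient, but after step $C_0$ the process is not an i.i.d.\ walk and the argument needs this excursion decomposition.

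The lesser issue is the conditioning in your single-vertex ballot estimate. Conditioning on $A^*$ and on all activation times biases the types $\Psi_i$ (the time $t_i$ depends on $\Psi_1,\dots,\Psi_{i-1}$), so the interleaving is not a priori uniform under that conditioning. The paper bypasses this with a structural fact you do not exploit: the explosion from $\omega(\threshold)$ to $C/p$ active vertices takes time $o(1)$ (Lemma~\ref{lemma:asyncexplosion}(iii)), hence at time $t_{C/p}$ a typical $v$ has $\Theta(C)$ active in-neighbours but has received \emph{zero} signals (Lemma~\ref{lemma:endgame}(ii)). From that instant, memorylessness makes each pending neighbour equally likely to fire next, and since $N^+_s(v)/N_s(v)\approx\beta$ by straightforward concentration, each of the first $C_0\ll C$ signals is excitatory with probability $\beta\pm\eta$ without conditioning on anything in the future.

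Your take-off via a differential inequality is a genuinely different route from the paper's, which never writes an ODE but uses the random variables $L_s(r)$ of Lemma~\ref{lemma:lower} on the geometric sequence of intervals $1/2,1/4,1/8,\dots$ (Lemma~\ref{lemma:asyncexplosion}). Your inequality captures the right phenomenon (finite-time blow-up replacing the $\log\log$ iteration), but as written it is neither derived nor dimensionally consistent --- the relevant received-signal count is $p\int_0^t a(s)(1-e^{-(t-s)})\,ds$, not $p\cdot\tfrac1n\int a$ --- and making it rigorous would need exactly the concentration inputs the paper packages into $L_s(r)$.
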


Note that this theorem implies two interesting phenomena. On the one hand, we see that
the asynchronous version accelerates the process dramatically: it essentially
stops after \emph{constant time}, as opposed to the roughly $\log_k\log (pn)$
rounds it takes in the synchronous model. 
On the other hand, we see that the final size of the process only depends on
the parameters $\tau$ and $\gamma$ but not on the size of the initial set, in
sharp contrast to the synchronous model. Therefore, by choosing the parameters $\tau$
and $\gamma$ appropriately, we can realize a normalization for an arbitrary
(linear) target size of the finally activated set.

We note that recently it was shown in~\cite{icalp} that  Theorem~\ref{thm:async-percolation} also implies similar results for other random graph models.

\subsection{Outline}

In~\cite{Janson2010} it is shown among other things that the classical
bootstrap percolation process on $G_{n,p}$ (without inhibition) consists of three phases.
While the active set is still very small (close to the threshold size
$\threshold$), the active set grows only by a small factor in each round. Once
the size of the active set is asymptotically larger than $\threshold$, the
growth of the active set picks up momentum, and we call this the \emph{explosion phase}.
Finally, once the active set has size at least $1/p$, the process
terminates in at most two more rounds (provided $p \gg \log n/n$):
one round to activate a linear subset of the vertices, and a possible
second round to to activate all remaining vertices. A similar situation occurs
also in the process with inhibition. In order to prove
Theorem~\ref{thm:synchron} we need to track the size of the active set very
precisely during the first two phases (as a function of the size of the starting set). In principle, this
seems like an easy task: given an active set of size $x$, we expect that in the
next round we activate $(n-x)\cdot \Pr[\Bin(x,p)\ge k]$ additional vertices. 
The problem is, of course, that such a simple approach ignores the dependencies between rounds.
We overcome this issue by defining a different probability space
(Section~\ref{sec:model}) that describes the same process but is more amenable
to a formal analysis. In this section we also prove some general properties of
the percolation process that apply both to the synchronous and the asynchronous
case. In Section~\ref{sec:introsync} we then use these prelimiaries to first
describe the evolution of the size of the active set as a function of the
number rounds very precisely (Theorem~\ref{thm:concentration}) and subsequently
use this result to prove Theorem~\ref{thm:synchron}. In Section~\ref{sec:async}
we then consider the asynchronous version of the process. As we will see, this
process behaves similarly as the synchronous version in the very early stage of
the process (while still close to the threshold) but then speeds up
considerably. Theorem~\ref{thm:async-percolation} then follows from the fact
that the sum of the incoming signals (positive minus negative ones) essentially
performs a random walk where the bias is a function of $\tau$ and $\gamma$.

\section{Preliminaries and Definitions}
\label{sec:model}

The aim of this section is to define a  general \emph{bootstrap percolation
process} which subsumes both the synchronous and the asynchronous case, and to
prove some basic properties of this process.

\subsection{Formal definition of the percolation process}\label{sec:probspace}


In this section we describe a version of the bootstrap percolation process that is particularly amenable to its analysis. We first activate the vertices in the starting set, assuming without loss of generality that this set consists of the vertices $1,\ldots,\start$. Then for each $s\ge 1$ we provide just enough information with the $s$-th active vertex to determine whether a $(s+1)$-st vertex is activated. Crucially, this information does \emph{not} require knowledge of the labels of the active vertices. In this way we can determine properties of active sets of a certain size, without actually knowing which vertices belong to this set. We now turn to the details.

Let $n\in \mathbb{N}$, let $p,\tau \in [0,1]$, and let $\gamma \in [0,1/p]$. Let $\Phi$
be a random variable taking values in the positive reals. Define the
product probability space
\[(\Omega, {\cal A}, \text{Pr}) = \prod_{1\le i,v\le n} \left( X_{iv}^+\times X_{iv}^- \times \Phi_{iv}\right)\;\times\; \prod_{i=1}^n \Psi_i\text, \]
where
\begin{itemize}
  \item $X_{iv}^+$ is the probability space of a Bernoulli random variable with parameter
    $p$,
  \item $X_{iv}^-$ is the probability space of a Bernoulli random variable with parameter
    $\gamma p$,
  \item $\Psi_i$ is the probability space with $\Pr[-1] = \tau$ and $\Pr[+1]=1-\tau$, and
  \item $\Phi_{iv}$ is the probability space on $\mathbb R_{>0}$ given by the distribution of $\Phi$.
\end{itemize}
By abuse of notation, we also denote the random variables corresponding to the
spaces $X_{iv}^+$, $X_{iv}^-$, $\Phi_{iv}$, and $\Psi_{i}$ again by $X_{iv}^+$,
$X_{iv}^-$, $\Phi_{iv}$, and $\Psi_{i}$, respectively. It will always be clear
from the context which interpretation we have in mind.

Before precisely defining the percolation process we give the intended
interpretations of these random variables. By symmetry, we assume that the
initially active set is $[\start]=\{1,\ldots,\start\}$. Define
\[ x_i := i\qquad\text{for all $1\le i\le \start$}\text. \]
In general, $x_i$ will be the label of the $i$-th vertex that becomes active in the percolation process, where, if several vertices should become active simultaneously 
ties are broken arbitrarily, for example by the natural ordering of the vertices. Then
\begin{itemize}
  \item $\Psi_{i}$ determines the sign of vertex $x_i$. That is, vertex $x_i$ is inhibitory if and only if $\Psi_{i}=-1$, which happens with probability $\tau$, and excitatory otherwise;
  \item $X_{iv}^-$ and $X_{iv}^+$ describe whether there is a directed edge from vertex $x_i$ to $v$:
    there is a directed edge from $x_i$ to $v$ exactly if
    either $\Psi_iX_{iv}^+ = 1$ or $\Psi_iX_{iv}^-=-1$. Note that the roles of
    $i$ and $v$ are not interchangeable: while $v$ represents a vertex of the underlying graph, $i$ represents the \emph{index} of the $i$-th vertex that becomes active.  
  \item $\Phi_{iv}$ describes the delay of the edge $(x_i,v)$. In the synchronous model, the delay is a constant of value $1$, while in the asynchronous model, it is an exponentially distributed random variable with parameter $1$. Note that for ease of analysis we define these random variables regardless of whether $X_{iv}=1$ or not.
\end{itemize}
For every $s\in [n]$, we define random variables
$E_s, I_s \colon \Omega \to \mathcal{P}([s])$ by 
\[ E_s := \{ i\in[s] \mid \Psi_i = +1\} \quad \text{and} \quad I_s := \{i\in[s] \mid \Psi_i = -1\}\text,\] respectively.
These are the sets containing the indices of the active excitatory resp.\ inhibitory vertices at the time at which exactly $s$ vertices are active.

We can now describe formally how elements $\omega \in \Omega$ define a \emph{percolation process}
\[((x_1,t_1),\dotsc,(x_n,t_n))\] with starting set $[\start]$ on $G_{n,p}$.
First, activate all the vertices in $[\start]$ at time $t=0$ by letting $x_s = s$ and $t_s=0$ for all $1\leq s \leq \start$. Assume now that active vertices $x_1,\ldots,x_s$ are given, where $s\ge \start$.  Also assume that for each such vertex $x_i$ we know the time $t_i$ when it turned active.

Then $x_{s+1}$ is defined as follows. First, for each vertex $v\in[n]\setminus\{x_1,\ldots,x_s\}$ we determine the earliest time $t_v^{(s)}$ at which $v$ has received $k$ more excitatory than inhibitory signals from the set $\{x_1,\ldots,x_s\}$:
\begin{multline*}
  t_{v}^{(s)} := \min\Big\{ t\in \mathbb R_{\ge 0}\mid |{\{i\in E_s\mid X_{iv}=1\text{ and } t_i + \Phi_{iv}\le t \}}| \ge \\
  k+|{\{i\in I_s \mid X_{iv}=1\text{ and } t_i + \Phi_{iv}\le t\}}|\Big\}\text,
\end{multline*}
where $\min{\emptyset}=\infty$. If there is some vertex $v$ for which $t^{(s)}_v<\infty$, then we define
\[ t_{s+1} := \min{\big\{ t_v^{(s)} \mid v \in[n]\setminus\{x_1,\dotsc,x_s\} \big\}}\text. \]
In this case, let $J:=\{ v\in [n]\setminus \{x_{1},\dotsc,x_s\} \mid t_v^{(s)}=t_{s+1}\}$ and $j:= |J|$. We set $t_{s+i} := t_{s+1}$ for $2\leq i\leq j$, and we let $x_{s+1} < \ldots < x_{s+j}$ be the (uniquely determined) vertices such that
\[ \{x_{s+1},\ldots,x_{s+j}\} = J\text.\]
If, on the other hand, we have
$t_v^{(s)}=\infty$ for all $v\in [n]\setminus\{x_1,\ldots,x_s\} $, then the process stops and we
set $t_{s'} := \infty$ and
$x_{s'} := \min{\{v\in [n]\setminus \{x_1,\ldots x_{s'-1}\}\}}$
for all $s' \ge s+1$,
i.e., we enumerate all remaining vertices by increasing label.

Finally, we introduce some more useful notation. For every $s\in [n]$ and $v\in [n]$, we define the random variables
\[ N_s^+(v) := \sum_{i\in E_s} X_{iv}^+\quad \text{and}\quad N_s^-(v):=\sum_{i\in I_s}X_{iv}^-\text.\]
Note that  $N^+_s(v)$ and $N^-_s(v)$ are the number of excitatory and inhibitory active neighbors of $v$ at the time at which exactly $s$ vertices are active. For brevity, we also use
$N_s(v):= N^+_s(v)+N^-_s(v)$.
\begin{remark}\label{remark:cond}
  From the definition of the probability space it follows immediately that
  for all positive integers $e$, the conditional distributions of $N^+_s(v)$ and $N^-_s(v)$
  given $|E_s|=e$ are binomial. More specifically, for every $0\leq x\leq s$, we have
  \[\Pr[N^+_s(v)=x\mid |E_s| = e] = \binom{e}{x}p^x (1-p)^{e-x}\]
  and
  \[\Pr[N^-_s(v)=x\mid |E_s| = e] = \binom{s-e}{x}(\gamma p)^x (1-\gamma p)^{s-e-x}\text.\]
  Also, for distinct vertices $v$ and $w$, the random variables
  $N^+_s(v)$, $N^-_s(v)$, $N^+_s(w)$ and $N^-_s(w)$ are mutually conditionally independent, given the value
  of $|E_s|$. In addition, note that $N_s(v)\sim\Bin(s,\hat p)$, for every $s,v\in [v]$,
  where $\hat p = (1-\tau)p+\tau \gamma p$.
\end{remark}


We will make frequent use of the following concentration bounds on the binomial distribution~\cite{RandomGraphsBook}.
\begin{lemma}[Chernoff]
  \label{lemma:Chernoff}
  Let $X_1,\ldots,X_n$ be independent Bernoulli variables with $\Pr[X_i = 1] = p$ and $\Pr[X_i = 0] = 1-p$ for all $1\leq i \leq n$, and let $X = \sum_{i=1}^n X_i$. Then for every $0 \leq \delta \leq 1$,
  \[
  \Pr[X \geq (1+\delta)np] \leq e^{-{\delta^2}np/3} \qquad \text{and}\qquad \Pr[X \leq (1-\delta)np] \leq e^{-{\delta^2}np/3}.
  \]
\end{lemma}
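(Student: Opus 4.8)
The plan is to use the standard exponential-moment (Bernstein--Chernoff) argument. I would bound $\Pr[X \ge a]$ by $e^{-ta}\,\EE[e^{tX}]$ for a well-chosen $t > 0$, use independence to write $\EE[e^{tX}] = \prod_{i=1}^n \EE[e^{tX_i}]$, and then optimize over $t$; the lower tail is treated symmetrically with $e^{-tX}$ in place of $e^{tX}$.

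For the upper tail, for any $t > 0$ Markov's inequality applied to $e^{tX}$ gives
\[ \Pr[X \ge (1+\delta)np] \le e^{-t(1+\delta)np}\prod_{i=1}^n \EE[e^{tX_i}] = e^{-t(1+\delta)np}\bigl(1 + p(e^t - 1)\bigr)^n \le \exp\bigl(np(e^t-1) - t(1+\delta)np\bigr), \]
where the last step uses $1 + x \le e^x$. Taking $t = \ln(1+\delta) \ge 0$, which minimizes the exponent, this becomes $\exp\bigl(np(\delta - (1+\delta)\ln(1+\delta))\bigr)$. It then remains to verify the scalar inequality $(1+\delta)\ln(1+\delta) - \delta \ge \delta^2/3$ for $\delta \in [0,1]$. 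I would do this by setting $g(\delta) := (1+\delta)\ln(1+\delta) - \delta - \delta^2/3$ and checking that $g(0) = 0$, $g'(\delta) = \ln(1+\delta) - 2\delta/3$ with $g'(0) = 0$, and $g''(\delta) = \tfrac{1}{1+\delta} - \tfrac{2}{3}$, which is nonnegative on $[0,1/2]$ and negative on $[1/2,1]$; hence $g'$ first increases from $0$ and then decreases, but since $g'(1) = \ln 2 - 2/3 > 0$ it remains nonnegative throughout $[0,1]$, so $g$ is nondecreasing there and $g \ge g(0) = 0$.

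For the lower tail, symmetrically, for $t > 0$,
\[ \Pr[X \le (1-\delta)np] = \Pr[e^{-tX} \ge e^{-t(1-\delta)np}] \le e^{t(1-\delta)np}\prod_{i=1}^n \EE[e^{-tX_i}] \le \exp\bigl(np(e^{-t}-1) + t(1-\delta)np\bigr), \]
and choosing $t = -\ln(1-\delta) \ge 0$ gives $\exp\bigl(np(-\delta - (1-\delta)\ln(1-\delta))\bigr)$; the boundary case $\delta = 1$ is handled separately via $\Pr[X \le 0] = (1-p)^n \le e^{-np} \le e^{-np/3}$. Finally I would check $(1-\delta)\ln(1-\delta) + \delta \ge \delta^2/3$ on $[0,1)$, which is even easier: with $h(\delta) := (1-\delta)\ln(1-\delta) + \delta - \delta^2/3$ one has $h(0) = 0$, $h'(\delta) = -\ln(1-\delta) - 2\delta/3$ with $h'(0) = 0$, and $h''(\delta) = \tfrac{1}{1-\delta} - \tfrac{2}{3} > 0$ throughout, so $h' \ge 0$ and $h$ is nondecreasing and nonnegative. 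Since the statement is entirely classical there is no conceptual obstacle; the only real work is the two one-variable inequality verifications above, and the constant $3$ appearing in the statement (in place of a sharper but less clean expression) is chosen precisely so that these checks go through transparently. Alternatively one may simply invoke the bounds as stated in \cite{RandomGraphsBook}.
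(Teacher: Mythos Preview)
Your argument is correct and is the standard exponential-moment proof of the Chernoff bounds; the two scalar inequalities you verify are exactly what is needed to pass from the optimized moment bound to the clean form with the constant $3$. The paper itself does not prove this lemma at all: it simply states it and cites \cite{RandomGraphsBook}, so there is nothing to compare beyond noting that you have supplied a full proof where the paper only gives a reference (and indeed you mention this alternative yourself at the end).
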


\subsection{General properties of the percolation process}\label{ssec:basic}

In this subsection we prove some properties of the probability space that are
independent of the distribution of the transmission delays $\Phi_{iv}$. These results thus apply
equally in the synchronous and the asynchronous case.

Let us start with the following simple fact, which states that at every point in time, the numbers of active
excitatory and inhibitory vertices are close to their expectations.

\begin{lemma}\label{lemma:basic1}
  Let $\delta_0=\delta_0(n)\in (0,1/2)$ be such that $\delta_0^2\start =\omega(
  -\log\delta_0)$.
  Then a.a.s.\ the percolation process satisfies
  \[\abs{E_s}\in (1\pm \delta_0)(1-\tau)s\quad\text{and}\quad\abs{I_s}\in (1\pm\delta_0)\tau s\]
  for all $s\ge \start$.
\end{lemma}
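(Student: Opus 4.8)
The plan is to notice that $|E_s|$ and $|I_s|$ depend only on the coordinates $\Psi_1,\dots,\Psi_s$ of $\omega\in\Omega$, and not at all on the edge variables, the delays, or the dynamics of the process: directly from the definition, $|E_s|$ is the number of indices $i\le s$ with $\Psi_i=+1$, hence a sum of $s$ i.i.d.\ Bernoulli variables with parameter $1-\tau$, so that $|E_s|\sim\Bin(s,1-\tau)$; likewise $|I_s|=s-|E_s|\sim\Bin(s,\tau)$, both unconditionally. So the lemma reduces to a uniform deviation bound for the partial sums of an i.i.d.\ sequence, and nothing about the percolation process is actually needed.

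Next I would fix $s$ and apply Lemma~\ref{lemma:Chernoff} with $\delta=\delta_0\in(0,1/2)\subseteq[0,1]$ to get $\Pr[|E_s|\notin(1\pm\delta_0)(1-\tau)s]\le 2e^{-\delta_0^2(1-\tau)s/3}$ and, symmetrically, $\Pr[|I_s|\notin(1\pm\delta_0)\tau s]\le 2e^{-\delta_0^2\tau s/3}$. Writing $c:=\tfrac13\min\{\tau,1-\tau\}>0$ and taking a union bound over all $\start\le s\le n$, the probability that one of these bounds fails is at most
\[ 4\sum_{s\ge\start}e^{-c\delta_0^2 s}=\frac{4e^{-c\delta_0^2\start}}{1-e^{-c\delta_0^2}}\le \frac{8e^{-c\delta_0^2\start}}{c\,\delta_0^2}, \]
using $1-e^{-x}\ge x/2$ for $x\in[0,1]$ (valid here since $c\delta_0^2<1$).

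The final step is to check that this bound is $o(1)$ under the hypothesis $\delta_0^2\start=\omega(-\log\delta_0)$. Taking logarithms, $\log\bigl(8e^{-c\delta_0^2\start}/(c\delta_0^2)\bigr)=-c\,\delta_0^2\start-2\log\delta_0+O(1)$; since $-\log\delta_0\ge\log 2>0$ and $\delta_0^2\start$ grows faster than $-\log\delta_0$, the term $c\,\delta_0^2\start$ eventually exceeds $2(-\log\delta_0)$, so the whole expression tends to $-\infty$ and the failure probability tends to $0$. On the complementary event all the stated bounds hold simultaneously for every $s\ge\start$, which is the claim.

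I expect the only subtle point to be this last step: the crude union bound $2n\,e^{-c\delta_0^2\start}$ is too weak (it would force the much stronger assumption $\delta_0^2\start=\omega(\log n)$), so one has to sum the geometric tail and observe that the factor $1/\delta_0^2$ coming from $1-e^{-c\delta_0^2}\approx c\delta_0^2$ is exactly offset by the $\omega(-\log\delta_0)$ margin built into the hypothesis. Everything else is routine.
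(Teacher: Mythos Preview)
Your proof is correct and follows essentially the same route as the paper: observe that $|E_s|\sim\Bin(s,1-\tau)$ directly from the definition of the $\Psi_i$, apply the Chernoff bound for each $s$, and sum the geometric tail over $s\ge\start$. The paper simply writes ``routine calculations'' for the last step, whereas you spell out the estimate $\sum_{s\ge\start}e^{-c\delta_0^2 s}\le 8e^{-c\delta_0^2\start}/(c\delta_0^2)$ and explain why the hypothesis $\delta_0^2\start=\omega(-\log\delta_0)$ is exactly what is needed to kill the $1/\delta_0^2$ factor; your remark that the cruder bound $2n\,e^{-c\delta_0^2\start}$ would require the stronger assumption $\delta_0^2\start=\omega(\log n)$ is a useful observation. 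The one small omission is the degenerate case $\tau\in\{0,1\}$: your constant $c=\tfrac13\min\{\tau,1-\tau\}$ vanishes there, so you should note (as the paper does) that in that case $|E_s|$ and $|I_s|$ equal their expectations identically and there is nothing to prove.
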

\begin{remark}
  We will apply this lemma in two settings: first, when $\delta_0$ is constant and $\start = \omega(1)$,
  second, when $\delta_0 = (\log n)^{-1-\eps/3}/(10k)$ and $\start \geq (\log n)^{2+\eps}$, for some constant $\eps>0$.
  Note that in both cases, the condition $\delta_0^2\start = \omega(-\log\delta_0)$ is satisfied.
\end{remark}
\begin{proof}[Proof of Lemma~\ref{lemma:basic1}]
  If $\tau =0$ or $\tau =1$ there is nothing to show. So assume $0<\tau<1$.
  It follows directly from the definitions that for every $s\in [n]$, we have $|E_s|\sim\Bin(s,1-\tau)$.
  Then Lemma~\ref{lemma:Chernoff} and the union bound imply that 
  \[\Pr[\exists s\geq \start : \abs{E_s} \not\in(1\pm\delta_0)(1-\tau)s] \le \sum_{s\ge \start}2e^{-\frac{\delta_0^2}3(1-\tau)s} = o(1),
  \]
  where we  used that
  $\delta_0^2 \start = \omega(-\log\delta_0)$ and routine calculations to obtain the last equality.
  The statement for $|I_s|$ is proved similarly.
\end{proof}

Our primary goal in this subsection is to introduce a general method to prove
that the process reaches a certain number of active vertices in a certain period of time.
To do this, for every $s\in [n]$ and $r\in \mathbb R_{>0}$, we define
\[ L_s(r) := |\{\start < v\leq n\mid N^+_s(v) = k \text{ and }N^-_{10s}(v)=0\text{ and }\max_{i\in E_s}X^+_{iv}\Phi_{iv}\leq r \}|\text. \]
The random variable $L_s(r)$ has the following very useful property: assume that exactly $s$ vertices are active at
some time $t$, and let $r$ be any positive real number; then at time $t+r$, there will be at least $\min{\{\start+L_s(r),10s\}}$ active vertices -- indeed, unless $10s$ vertices are activated before time $t+r$, every vertex counted in $L_s(r)$ will be active by time $t+r$. (Here the value $10s$ has no deeper meaning: we just need some value sufficiently larger than $s$.) Therefore, if we want to show that many vertices turn active quickly, then 
we need to prove lower bounds for the variables $L_s(r)$. This is what we will do in the next lemma.

For the analysis, it turns out to be very useful to parametrize the number of active vertices at a given
time as $s=x\cdot\threshold/(1-1/k)$, for some $x >0$. For this reason, we introduce the notation
\begin{equation}\label{eq:def:box} \Lambda =\Lambda(n,p,k,\tau):= \frac{\threshold}{1-1/k} = \left(\frac{(k-1)!}{(1-\tau)^knp^k}\right)^{1/(k-1)}\text,\end{equation}
and note that  $\Lambda$ satisfies
\begin{equation}\label{eq:prop:box} \frac{(1-\tau)^knp^k\Lambda^k}{(k-1)!}=\Lambda\text.\end{equation}

The following lemma shows essentially that, conditioned on the event that the values
$|E_s|$ are very close the their expectations, it is unlikely 
that there is some $\start \leq s\ll 1/p$ for which $L_s(r)$ is very small.
\begin{lemma}\label{lemma:lower}
  There exists a positive constant $c=c(\tau,k)$ such that if $\start\geq \threshold$ and $p\geq n^{-1}$, then
  the following holds for every $2k^2/((1-\tau)\start)\leq \delta_0\leq 1/(30k)$.
  Let $\eta\in [10k\delta_0,1/2)$ and $\delta = \eta/(10k)$.
  Write
  $\mathcal{E}$ for the event that $|E_s|\in (1\pm\delta_0)(1-\tau)s$ holds for all $s\geq \start$.
  Then for every $\start \leq s=x\thebox \leq \min\{\delta/p,\delta/(\gamma p)\}$ and $r\in \mathbb R_{>0}$, we have
  \[ \Pr[L_s(r) \geq (1- \eta)\Pr[\Phi\leq r]^kx^{k}\Lambda/k\mid \mathcal E] \geq 1-e^{-c\eta^2\Pr[\Phi\leq r]^kx^k\Lambda}\text.\]
\end{lemma}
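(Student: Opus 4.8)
The plan is to fix the signs of all vertices, i.e.\ to condition on the entire $\Psi$-configuration, and to observe that $L_s(r)$ then becomes an honest binomial random variable, so that the bound follows from a single application of the Chernoff lower tail (Lemma~\ref{lemma:Chernoff}). The key point is that $L_s(r)$ is an explicit function of the variables $\Psi_i,X^\pm_{iv},\Phi_{iv}$ and makes no reference to the dynamics of the process, and that for a fixed vertex $v$ the event that $v$ is counted in $L_s(r)$ depends only on $(X^+_{iv})_{i\in E_s}$, $(X^-_{iv})_{i\in I_{10s}}$ and $(\Phi_{iv})_{i\in E_s}$; since these sets of variables are disjoint for distinct $v$, conditionally on $\Psi$ the events that $v$ is counted, $\start<v\le n$, are independent.

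So fix a sign configuration $\psi$ in the event $\mathcal E$, so that $|E_s|\in(1\pm\delta_0)(1-\tau)s$ (and note $|I_{10s}|\le10s$ trivially). By Remark~\ref{remark:cond}, and because the $\Phi_{iv}$ are independent of the $X$-variables, the conditional probability that a given $v$ with $\start<v\le n$ is counted equals
\[ q_\psi \;=\; \binom{|E_s|}{k}p^k(1-p)^{|E_s|-k}\cdot(1-\gamma p)^{|I_{10s}|}\cdot\Pr[\Phi\le r]^k, \]
where the three factors are $\Pr[N^+_s(v)=k]$, $\Pr[N^-_{10s}(v)=0]$, and the probability that the $k$ relevant delays are all at most $r$; in particular this value does not depend on $v$. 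Hence $L_s(r)\mid\{\Psi=\psi\}\sim\Bin(n-\start,q_\psi)$.

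Next I would show that $(n-\start)q_\psi\ge(1-\tfrac45\eta)M$ for all $\psi\in\mathcal E$, where $M:=\Pr[\Phi\le r]^kx^k\Lambda/k$ is the target value up to the factor $1-\eta$. This is where all the hypotheses are spent: $s\le\min\{\delta/p,\delta/(\gamma p)\}$ gives $sp,\,s\gamma p\le\delta$, hence $(1-p)^{|E_s|-k}\ge1-2\delta$ and $(1-\gamma p)^{|I_{10s}|}\ge1-10\delta$; the constraint $\delta_0\ge2k^2/((1-\tau)\start)$ together with $|E_s|\ge(1-\delta_0)(1-\tau)s$ gives $\binom{|E_s|}{k}\ge(1-\delta_0)|E_s|^k/k!$ by Bernoulli's inequality; and $|E_s|^k\ge(1-k\delta_0)(1-\tau)^ks^k$. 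Multiplying these out, using $p\ge1/n$ (so $\start\le s\le\delta/p\le\delta n$, whence $n-\start\ge(1-\delta)n$), the identity $(1-\tau)^knp^k\Lambda^k=(k-1)!\,\Lambda$ from \eqref{eq:prop:box}, and $s=x\Lambda$, the quantity $(1-\tau)^knp^ks^k$ collapses to $(k-1)!\,x^k\Lambda$, and one is left with $M$ times a product of error factors $(1-\delta)(1-\delta_0)(1-k\delta_0)(1-2\delta)(1-10\delta)$; since $\delta_0\le\delta=\eta/(10k)$ and $k\ge2$, this product is at least $1-\tfrac{k+14}{10k}\eta\ge1-\tfrac45\eta$.

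Finally, fix $\psi\in\mathcal E$ and write $\mu_\psi:=(n-\start)q_\psi\ge(1-\tfrac45\eta)M$. Then $(1-\eta)M\le(1-\delta')\mu_\psi$ with $\delta':=\tfrac{\eta/5}{1-4\eta/5}\in[\eta/5,1)$ (using $\eta<1/2$), so Lemma~\ref{lemma:Chernoff} gives that $\Pr[L_s(r)<(1-\eta)M\mid\Psi=\psi]$ is at most $\Pr[\Bin(n-\start,q_\psi)\le(1-\delta')\mu_\psi]\le e^{-\delta'^2\mu_\psi/3}$, which is at most $e^{-c\eta^2\Pr[\Phi\le r]^kx^k\Lambda}$ for a suitable constant $c=c(k)>0$ (here using $\mu_\psi\ge M/2$ and $M=\Pr[\Phi\le r]^kx^k\Lambda/k$, so that the factor $1/k$ turns into the $k$-dependence of $c$). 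Since this bound is uniform in $\psi$, averaging it against the conditional law $\Pr[\,\cdot\mid\mathcal E]$ yields $\Pr[L_s(r)<(1-\eta)M\mid\mathcal E]\le e^{-c\eta^2\Pr[\Phi\le r]^kx^k\Lambda}$, which is exactly the claim. The only real obstacle is the bookkeeping in the third step: one must verify that the precise numerical hypotheses ($\delta_0\ge2k^2/((1-\tau)\start)$, $\delta_0\le1/(30k)$, $\eta\ge10k\delta_0$, $s\le\min\{\delta/p,\delta/(\gamma p)\}$, $\start\ge\threshold$, $p\ge1/n$) are exactly what is needed to absorb every correction term into a single fixed fraction of $\eta$, so that the Chernoff exponent stays of order $\eta^2\Pr[\Phi\le r]^kx^k\Lambda$.
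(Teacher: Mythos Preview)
Your proof is correct and follows essentially the same approach as the paper: condition on the sign configuration, observe that $L_s(r)$ becomes a binomial random variable (the paper conditions on the pair $(|E_s|,|I_{10s}|)$ rather than the full $\Psi$-configuration, but this is cosmetic since only these sizes enter), lower-bound the success probability using the hypotheses on $s$, $\delta_0$, and $\start$, and apply Chernoff. Your additive bookkeeping of the error factors (summing to $(k+14)\delta\le\tfrac45\eta$) is a minor variant of the paper's multiplicative tracking via $(1-\delta)^{14+k}\ge1-\eta$, and your final constant even avoids the $\tau$-dependence that the paper allows for.
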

\begin{proof}
  Write $\mathcal{E}_s(a)$ for the event that $|E_s|=a$, and $\mathcal I_{s}(b)$ for the event that
  $|I_{s}|=b$. Fix some $\start\leq s\leq \min\{\delta/p,\delta/(\gamma p)\}$ and $r\in \mathbb R_{>0}$.

  We  first prove that for all integers $(1-\tau)s/2\leq a \leq s$ and $0\leq b\leq 10s$ we have
  \begin{equation}\label{eq:lower}
    \EE[L_s(r)\mid\mathcal{E}_s(a)\cap \mathcal I_{10s}(b)]\geq (1-\delta)^{13}\cdot \frac{n a^kp^k}{k!}\cdot \Pr[\Phi\leq r]^k\text.
  \end{equation}
  To see this, fix a vertex $\start<v\leq n$ arbitrarily and write $\mathcal L(v)$ for the event
  that $N^+_s(v)= k$ and $N^-_{10s}(v)= 0$ and $\max_{i\in E_s}X_{iv}\Phi_{iv}\leq r$. 
  Using the conditional independence of $N^+_s(v)$ and $N^-_s(v)$ (see Remark~\ref{remark:cond}), and the independence
  of the variables $\Phi_{iv}$, we get
  \begin{align*}
    \Pr[\mathcal L(v)\mid \mathcal{E}_s(a)\cap \mathcal I_{10s}(b)]
    &= \Pr[N^+_s(v)=k \mid \mathcal{E}_s(a)]\cdot \Pr[N^-_{10s}(v)=0\mid \mathcal{I}_{10s}(b)]\cdot \Pr[\Phi\leq r]^k\\
    &= \binom{a}k p^k(1-p)^{a-k}\cdot(1-\gamma p)^{b}\cdot\Pr[\Phi\leq r]^k\\
    &\ge \left(1-\frac{k^2}{a}\right)\frac{a^kp^k}{k!}(1-p)^s\cdot (1- \gamma p)^{10s}\cdot \Pr[\Phi\leq r]^k\\
    &\ge  (1-\delta)^{12}\cdot\frac{a^kp^k}{k!}\cdot \Pr[\Phi\leq r]^k,
  \end{align*}
  using $s\leq \min{\{\delta/p,\delta/(\gamma p)\}}$,
  and the fact that  $k^2/a\leq 2k^2/((1-\tau)\start)\leq \delta_0\leq \delta$.

  By definition, we have $L_s(r) = |\{\start<v\leq n\mid \mathcal L(v)\}|$, and thus
  \begin{align*}\EE[L_s(r) \mid \mathcal{E}_s(a)\cap \mathcal I_{10s}(b)] & \geq (n-\start)(1-\delta)^{12}\cdot \frac{a^kp^k}{k!}\cdot \Pr[\Phi\leq r]^k\\
    & \geq (1-\delta)^{13}\cdot \frac{n a^kp^k}{k!}\cdot \Pr[\Phi\leq r]^k\text,
  \end{align*}
  for all $a$ and $b$ as above, proving \eqref{eq:lower}. Here we used that $\start\leq \delta/p\leq \delta n$.

  Now, observe that, by definition of the underlying probability space, the events $\{\mathcal L(v)\mid v\in [n]\}$ are conditionally independent given $\mathcal E_s(a) \cap \mathcal I_{10s}(b)$, for all choices of $a$ and $b$.
  Then, by Lemma~\ref{lemma:Chernoff}, writing $\mu_{a,b} := \EE[L_s(r) \mid \mathcal{E}_s(a)\cap \mathcal I_{10s}(b)]$, we have 
  \[  \Pr[L_s(r) < (1-\delta) \mu_{a,b}\mid \mathcal{E}_s(a)\cap \mathcal{I}_{10s}(b)]
  <e^{-\delta^2\mu_{a,b}/3}
  <e^{-\delta^2\Pr[\Phi\leq r]^k(1-\delta)^{13}na^kp^k/(3k!)}. \]
  for all $(1-\tau)s/2\leq a\leq s$ and $0\leq b\leq 10s$.
  If we condition on the event $\mathcal E$, then we may assume $a\in (1\pm \delta)(1-\tau)s$, and we get
  \begin{align*}
    \Pr[L_s(r) &< (1-\delta)^{14+k}\frac{\Pr[\Phi\leq r]^k(1-\tau)^kns^kp^k}{k!} \mid \mathcal E]\\
    &< e^{-\delta^2\Pr[\Phi\leq r]^k(1-\delta)^{14+k}(1-\tau)^kns^kp^k/(3k!)}. \end{align*}
  The lemma now follows  using $s=x\Lambda$ with \eqref{eq:prop:box}
  and from $\delta=\eta/(10k)$, which implies that $1/2\le1-\eta\leq(1-\delta)^{14+k}$ holds for $k\geq 2$. 
\end{proof}

\begin{remark}\label{rem:lsrexp}
  For later reference, we just note here that \eqref{eq:lower} in the proof above, together with
  \eqref{eq:prop:box}, implies that for every
  $\start\leq s=x\thebox\leq \min\{1/(10kp),1/(10k\gamma p)\}$ and $(1-\tau)s/2\leq a \leq s$, we have
  \[\EE[L_s(r)\mid |E_s|=a] = \Omega(x^k\Lambda\Pr[\Phi\leq r]^k)\text.\]
\end{remark}

Recall that, by the definition of $L_s(r)$, if there are exactly $s$ active vertices at time $t$, then at time $t+r$ there are at least $\min\{\start + L_s(r), 10s\}$ active vertices. We now use this observation to obtain a lower bound on the growth of the process.

\begin{corollary}\label{cor:growth_lower}
  For every $\eps>0$, there exist positive constants $c_0=c_0(\eps,k,\Phi)$ and $\delta=\delta(\eps,\gamma,k)$ such that
  for every function $f\colon \mathbb N\to \mathbb N$, the percolation process with
  $\start \geq (1+\eps)\threshold$ and $n^{-1}\leq p\ll n^{-1/k}$ is a.a.s.~such that
  at least $\min\{f(n)\start,\delta/p\}$ vertices are active at time $c_0\log(f(n))$.
\end{corollary}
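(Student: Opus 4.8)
The plan is to apply the bootstrap estimate of Lemma~\ref{lemma:lower} repeatedly over time windows of a fixed constant length $r$. Write $\sigma(t)$ for the number of vertices active at time $t$ (a non‑decreasing function of $t$). We may assume $f(n)\ge 2$ (for $f(n)=1$ the claimed bound is just $\sigma(0)=\start$) and also $\start<\delta/p$ for the constant $\delta$ fixed below (otherwise $\sigma(0)=\start\ge\delta/p\ge\min\{f(n)\start,\delta/p\}$ already). Pick a small constant $\eta=\eta(\eps,k)\in(0,1/3)$ and a window length $r=r(\eps,k,\Phi)>0$ (with $r=1$ in the synchronous case), both to be calibrated in the profile analysis so as to guarantee the ``escape'' below; put $q:=\Pr[\Phi\le r]$ and $\delta_0:=\eta/(10k)$, and apply Lemma~\ref{lemma:lower} with these $\eta,\delta_0$. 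Since $p\ll n^{-1/k}$ forces $np^k=o(1)$, we have $\Lambda\to\infty$ and hence $\start\ge(1+\eps)\threshold\to\infty$, so for $n$ large the lemma's hypotheses on $\start$ and $\delta_0$ are met; its conclusion is then available for all $\start\le s\le\min\{\eta/(10kp),\eta/(10k\gamma p)\}$, and we set the corollary's constant to be $\delta:=\eta/(10k\max\{1,\gamma\})$, so that this range is exactly $[\start,\delta/p]$. Finally, by Lemma~\ref{lemma:basic1} the event $\mathcal E=\{\,|E_s|\in(1\pm\delta_0)(1-\tau)s\text{ for all }s\ge\start\,\}$ holds a.a.s.

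Next I set up a deterministic ``profile'' for the process. I will use a minor variant of the observation recorded after the definition of $L_s(r)$: for every $t$ and every $s$, if $\sigma(t)\ge s$ then $\sigma(t+r)\ge\min\{\start+L_s(r),10s\}$ — indeed ``$\sigma(t)\ge s$'' says precisely that the first $s$ active vertices have turned active by time $t$, and no vertex counted by $L_s(r)$ can receive an inhibitory signal before $10s$ vertices are active. Set $\beta:=(1-\eta)q^k/k$ and define $s_0:=\start$, $s_{j+1}:=\min\{\lfloor\start+\beta(s_j/\Lambda)^k\Lambda\rfloor,\lfloor10s_j\rfloor\}$, and let $T$ be least with $s_T\ge\min\{f(n)\start,\delta/p\}$. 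Writing $y_j:=s_j/\Lambda$, so $y_0=\start/\Lambda\ge(1+\eps)(1-1/k)=:x_0$, the recursion is driven by the convex map $y\mapsto x_0+\beta y^k$. Choosing $\eta$ small and $r$ large (so $q$ close to $1$), both depending only on $\eps,k$, one forces $g(y):=x_0+\beta y^k-y\ge c_1$ for all $y\ge x_0$, with $c_1=c_1(\eps,k)>0$: the interior minimum of $g$ equals $x_0-(1-1/k)\bigl((1-\eta)q^k\bigr)^{-1/(k-1)}\to x_0-(1-1/k)\ge\eps(1-1/k)>0$ (or, if the minimiser lies below $x_0$, one uses $g(x_0)=\beta x_0^k>0$); \emph{this is the only place $\eps>0$ is used}. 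Hence $y_{j+1}\ge\min\{y_j+c_1/2,\,9y_j\}$ (the floors cost $\le1/\Lambda\le c_1/2$ for $n$ large), so $y_j$ passes the constant $Y:=(10/\beta)^{1/(k-1)}$ within $O_{\eps,k}(1)$ steps, after which $x_0+\beta y_j^k\ge10y_j$ and thus $y_{j+1}\ge 9y_j$. Since the target value satisfies $\min\{f(n)\start,\delta/p\}/\Lambda\le f(n)x_0$, at most $\log_9 f(n)+O(1)$ further (factor‑$9$) steps are needed, so $T=O_{\eps,k}(\log f(n))$ (using $f(n)\ge 2$), and in particular $Tr\le c_0\log f(n)$ for a suitable $c_0=c_0(\eps,k,\Phi)$.

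To conclude: for each $j<T$ we have $\start\le s_j<\delta/p=\min\{\eta/(10kp),\eta/(10k\gamma p)\}$, so Lemma~\ref{lemma:lower} (with $x=s_j/\Lambda$) gives, conditionally on $\mathcal E$, that $L_{s_j}(r)\ge(1-\eta)q^k(s_j/\Lambda)^k\Lambda/k=\beta(s_j/\Lambda)^k\Lambda$ with probability at least $1-\exp(-c\eta^2q^k(s_j/\Lambda)^k\Lambda)$. Let $\mathcal G$ be the intersection of these $T$ events. On $\mathcal E\cap\mathcal G$ an immediate induction gives $\sigma(jr)\ge s_j$ for all $j\le T$ (base $\sigma(0)=s_0$; step $\sigma((j+1)r)\ge\min\{\start+L_{s_j}(r),10s_j\}\ge\min\{\start+\beta(s_j/\Lambda)^k\Lambda,10s_j\}\ge s_{j+1}$), whence $\sigma(c_0\log f(n))\ge\sigma(Tr)\ge s_T\ge\min\{f(n)\start,\delta/p\}$. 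It remains to see that $\mathcal G$ is very likely: $\Pr[\overline{\mathcal G}\mid\mathcal E]\le\sum_{j<T}\exp(-c\eta^2q^k(s_j/\Lambda)^k\Lambda)$, and since $s_j/\Lambda\ge x_0$ throughout and grows at least geometrically with ratio $9$ beyond the first $O(1)$ steps, the exponents $(s_j/\Lambda)^k\Lambda$ grow at least like $9^{kj}$, so this sum is, up to a constant factor, dominated by its first term and is thus $\exp(-\Omega(\Lambda))=o(1)$ (as $\Lambda\to\infty$). Together with $\Pr[\overline{\mathcal E}]=o(1)$ this proves the corollary.

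The heart of the argument — and the main obstacle — is the escape estimate in the second step: because we only assume $\start\ge(1+\eps)\threshold$, the starting value $x_0=\start/\Lambda$ may be only a hair above the critical value $(1+\eps)(1-1/k)$, and one must calibrate the slack $\eta$ of Lemma~\ref{lemma:lower} and the window length $r$ (equivalently $q=\Pr[\Phi\le r]$), as explicit functions of $\eps$ and $k$, so that the one‑step map $y\mapsto x_0+\beta y^k$ is never trapped below a constant but climbs by a fixed additive amount per step until it takes off geometrically. Everything else is routine: checking the hypotheses of Lemmas~\ref{lemma:basic1} and~\ref{lemma:lower} (automatic for $n$ large, as $\Lambda,\start\to\infty$), the floor bookkeeping, and the convergence of the union bound (which works precisely because the $s_j$ grow geometrically, so the failure probabilities sum to $\exp(-\Omega(\Lambda))$).
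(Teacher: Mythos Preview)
Your proof is correct and follows the same strategy as the paper's: iterate Lemma~\ref{lemma:lower} over time windows of fixed length $r$, choosing $\eta$ small and $r$ large so that each window forces multiplicative growth bounded away from~$1$. The paper packages this more compactly: instead of tracking an explicit profile sequence $(s_j)$ and doing a union bound over the $T$ profile values, it takes a union bound over \emph{all} integers $s\in[\start,\delta/p]$ at once (the failure probabilities $e^{-cs}$ sum to $o(1)$), and then, rather than splitting into an additive escape phase and a geometric phase, it directly minimises the one-step growth ratio
\[
\frac{\start+L_s(r)}{s}\ \ge\ (1-\eta)^2\,\frac{\start/\Lambda+x^k/k}{x}
\]
over $x\ge\start/\Lambda$, obtaining the uniform lower bound $(1-\eta)^2(\start/\threshold)^{(k-1)/k}\ge(1-\eta)^2(1+\eps)^{(k-1)/k}>1$; this is exactly your escape computation in disguise. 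One small slip in your write-up: the inequality $\min\{f(n)\start,\delta/p\}/\Lambda\le f(n)x_0$ fails when $y_0=\start/\Lambda>x_0$ (nothing forces $\start$ to sit right at $(1+\eps)\threshold$); the correct target is $f(n)y_0$, but since the geometric phase starts from $\max\{Y,y_0\}\ge y_0$ you still reach $f(n)y_0$ within $\log_9 f(n)$ further steps, so $T=O(\log f(n))$ as claimed.
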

\begin{proof}
  Fix a sufficiently small constant $\eta=\eta(\eps,k)\in (0,1)$ and let $\delta:=\eta/(10k)$.
  Note that $p\ll n^{-1/k}$ implies that $\start \geq \threshold=\omega(1)$, so that for large enough $n$,
  we have $\delta \geq 2k^2/((1-\tau)\start)$. Also, since $\start=\omega(1)$,
  by Lemma~\ref{lemma:basic1}, we may assume
  that $|E_s|\in (1\pm\delta)(1-\tau)s$ holds for all $s\geq \start$.

  Choose $r=r(k,\Phi)$ to be so large that $\Pr[\Phi\leq r]^k\geq 1-\eta$.
  Applying Lemma~\ref{lemma:lower}, we get
  that for every $\start \leq s=x\Lambda\leq \min\{\delta/p,\delta/(\gamma p)\}$, we have
  \[ \Pr[L_s(r) \geq (1-\eta)^2x^{k}\Lambda/k] \geq 1-e^{-\Omega(x^k\Lambda)}\geq 1-e^{-cs}\text, \]
  for some positive constant $c=c(\eta,k,\tau)$. 
  Since $\sum_{s\geq \start}e^{-c s} =
  e^{-c\start}/(1-e^{-c}) = o(1)$
  the process is such that a.a.s.
  \[ L_s(r) \geq (1-\eta)^2x^{k-1}s/k\]
  holds for all $\start\leq s\leq \min{\{\delta/p,\delta/(\gamma p)\}}$. Thus
  \[ \frac{\start + L_s(r)}{s}\geq (1-\eta)^2\frac{\start/\Lambda + x^k/k}{x}\geq (1-\eta)^2(\start/\threshold)^{\frac{k-1}{k}}\text,\]
  where the last inequality follows by minimizing over $x\ge \start/\Lambda = (1-1/k)(\start/\threshold)$ and the minimum is obtained at $x^k=\start/\threshold$.
  As $\start\geq (1+\eps)\threshold$, the definition of $L_s(r)$ implies that
  after every time period of constant length  $r$, 
  the number of active vertices is multiplied by a constant factor of size at least
  $\min{\{(1-\eta)^2(1+\eps)^{k-1},10\}}$.
  If $\eta$ is small enough, then $(1-\eta)^2(1+\eps)^{k-1}>1$, and the corollary follows.
\end{proof}

Corollary~\ref{cor:growth_lower} implies that the process grows at least at an exponential rate. 
In fact, it will turn out that in both the
synchronous and the asynchronous case, the growth is actually much faster. However, the corollary already
implies two useful facts. Firstly,
regardless of the fraction $\tau n$ of inhibitory vertices, the process will always reach $\Theta(1/p)$ active vertices.
Secondly, in order to activate 
a constant multiple of the starting set we only need $O(1)$ time. 

\begin{corollary}\label{cor:basic}
  For every  $\eps >0$ there exists a $\delta=\delta(\eps,\gamma,k)>0$ such that for $\start \geq (1+\eps)\threshold$ and $n^{-1}\leq p\ll n^{-1/k}$, the process a.a.s.~activates at least $\delta/p$ vertices.
\end{corollary}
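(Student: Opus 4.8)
The plan is to obtain Corollary~\ref{cor:basic} as an immediate consequence of Corollary~\ref{cor:growth_lower}. First I would fix $\eps>0$ and let $\delta=\delta(\eps,\gamma,k)>0$ and $c_0=c_0(\eps,k,\Phi)>0$ be the constants supplied by Corollary~\ref{cor:growth_lower}; shrinking $\delta$ if necessary, we may assume $\delta\le 1$. Then I would invoke Corollary~\ref{cor:growth_lower} with the (admissible) choice $f(n):=n$, which asserts that a.a.s.\ at least $\min\{n\start,\delta/p\}$ vertices are active at time $c_0\log n$.

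Next I would check that $\min\{n\start,\delta/p\}=\delta/p$. Since the process starts with $\start\ge(1+\eps)\threshold\ge 1$ active vertices and since $p\ge n^{-1}$, we have $n\start\ge n\ge 1/p\ge\delta/p$, which gives the equality. (Incidentally, $\delta\le 1$ together with $1/p\le n$ also ensures $\delta/p\le n$, so $\delta/p$ is a meaningful target in a graph on $n$ vertices.) Hence a.a.s.\ at least $\delta/p$ vertices are active at the finite time $c_0\log n$.

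Finally, I would use that the set of active vertices is non-decreasing in time: in the construction of Section~\ref{sec:probspace}, once a vertex $x_s$ is assigned a finite activation time $t_s$ it stays active forever. Consequently, if at least $\delta/p$ vertices are active at time $c_0\log n$, then the process activates at least $\delta/p$ vertices overall, which is exactly the assertion of the corollary, with the very same $\delta$. There is no genuine obstacle in this argument; the only points that need to be verified are that $f(n)=n$ is large enough that $f(n)\start$ dominates $\delta/p$, and that the constant $\delta$ produced by Corollary~\ref{cor:growth_lower} can simply be reused here, both of which are immediate.
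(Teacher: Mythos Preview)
Your proposal is correct and follows exactly the approach intended by the paper: the paper states Corollary~\ref{cor:basic} without proof, presenting it as one of ``two useful facts'' implied directly by Corollary~\ref{cor:growth_lower}. Your derivation---applying Corollary~\ref{cor:growth_lower} with $f(n)=n$ and then checking that $n\start\ge \delta/p$ via $p\ge n^{-1}$---is precisely the kind of one-line verification the paper leaves implicit.
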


\begin{corollary}\label{cor:babysteps}
  For every $\eps>0$ and $c>0$ there exist constants $T=T(\eps,k,c,\Phi)>0$ and $\delta=\delta(\eps,\gamma,k)>0$
  such that if $\start \geq (1+\eps)\threshold$
  and $n^{-1}\leq p\ll n^{-1/k}$, then the percolation
  process a.a.s.~activates at least $\min\{c\start,\delta/p\}$ vertices in time $T$.
\end{corollary}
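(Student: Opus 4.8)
This corollary is an immediate consequence of Corollary~\ref{cor:growth_lower}, applied with a suitable \emph{constant} target function. Concretely, set $m:=\max\{\lceil c\rceil,2\}$, a fixed positive integer, and apply Corollary~\ref{cor:growth_lower} to the constant function $f\equiv m$ (this is permitted, as that corollary is stated for an arbitrary $f\colon\mathbb N\to\mathbb N$). It yields constants $c_0=c_0(\eps,k,\Phi)>0$ and $\delta=\delta(\eps,\gamma,k)>0$ such that a.a.s.\ at least $\min\{m\start,\delta/p\}$ vertices are active at time $c_0\log m$. Now put $T:=c_0\log m$; since $m\ge 2$ this is a genuine positive constant depending only on $\eps$, $k$, $c$ and $\Phi$, as required. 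Finally $m\ge c$, and since $t\mapsto\min\{t,\delta/p\}$ is nondecreasing we have $\min\{m\start,\delta/p\}\ge\min\{c\start,\delta/p\}$; because the number of active vertices never decreases over time, this gives the claim.

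\textbf{Main obstacle.} There is essentially none beyond bookkeeping: once one has the multiplicative growth statement of Corollary~\ref{cor:growth_lower}, activating a \emph{fixed constant} multiple of the starting set costs only a fixed constant number of growth steps, hence constant time. The only point worth a moment's care is the choice $m\ge 2$, which ensures $T=c_0\log m>0$ rather than the degenerate value one would obtain from $f\equiv 1$ (for $c\le 1$ the statement is in any case essentially trivial, since the starting set already contains $\start\ge c\start$ active vertices at time $0$, but routing everything through Corollary~\ref{cor:growth_lower} avoids a case distinction). If a self-contained argument were preferred, one could instead re-run the proof of Corollary~\ref{cor:growth_lower} directly: fix $\eta=\eta(\eps,k)$ small, choose $r=r(k,\Phi)$ with $\Pr[\Phi\le r]^k\ge 1-\eta$, and use Lemma~\ref{lemma:lower} together with a union bound over $s$ to see that a.a.s., as long as fewer than $\Theta(1/p)$ vertices are active, each time window of length $r$ multiplies the number of active vertices by a fixed factor $\lambda=\lambda(\eps,k)>1$; after $\lceil\log_\lambda m\rceil$ such windows the active set has either reached the $\Theta(1/p)$ threshold or grown by a factor at least $m\ge c$, giving the claim with $T=r\lceil\log_\lambda m\rceil$. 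The reduction to Corollary~\ref{cor:growth_lower} is the cleaner route and is the one I would write up.
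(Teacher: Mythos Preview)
Your proposal is correct and matches the paper's approach: the paper states Corollary~\ref{cor:babysteps} as an immediate consequence of Corollary~\ref{cor:growth_lower} without a separate proof, and your argument (applying Corollary~\ref{cor:growth_lower} with the constant function $f\equiv m=\max\{\lceil c\rceil,2\}$ and setting $T=c_0\log m$) is precisely the intended derivation. The care you take with $m\ge 2$ to ensure $T>0$ is a nice touch that the paper leaves implicit.
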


\subsection{Phases of percolation}

It is interesting to note that in the statements of Corollaries~\ref{cor:basic} and \ref{cor:babysteps},
the inhibition parameter $\tau$ is not mentioned at all. The reason for this is that, as long as there are $o(1/p)$
active vertices,
the number of vertices that have even one active inhibitory neighbor is $o(n)$; in this sense, the behavior of the
process is almost completely unaffected by the presence of inhibitory vertices until there are $\Omega(1/p)$
active vertices.

Thus, the evolution of the percolation process divides naturally into two separate phases: the \emph{initial phase} $\start \leq s \ll 1/p$,
during which the growth is largely unaffected by inhibition, and the \emph{end phase} $s=\Omega(1/p)$, where many vertices start to have inhibitory neighbors.

If $\start =\Theta(\threshold)$, then one can further subdivide the initial phase into two phases with
$s= \Theta(\threshold)$
and $\threshold \ll s\ll 1/p$, respectively. The former is called the
\emph{startup phase}, and Corollary~\ref{cor:babysteps} shows that if $\start
\geq (1+\eps)\threshold$, then the time spent in the startup phase is bounded
from above by some constant. However, one can show that if
$\start=\Theta(\threshold)$, then this upper bound is close to the truth,
i.e., the size of the active set really increases only by some (small) constant factor
in each round. In contrast, once we have $s\gg \threshold$, the rate of growth
speeds up considerably. Thus we call this second phase the \emph{explosion
phase}. As we will see, the time that is spent in the explosion phase depends
significantly on the distribution $\Phi$ of the signal delays: for the
synchronous process (where $\Phi$ is identically one), the time spent in the
explosion phase is $\log_k\log_{(\start/\threshold)} (pn)+O(1)$, while for the
asynchronous process (where $\Phi$ is exponentially distributed with mean one),
it is $o(1)$.

\section{Synchronous Bootstrap Percolation}\label{sec:introsync}

In this section we study the synchronous bootstrap percolation process with inhibition. Recall that the synchronous
process is defined by taking all edge delays to be constants $\Phi_{iv}=1$. Then it is clear that for every vertex $x_i$, the time $t_i$ at which $x_i$ becomes active is either a non-negative integer or $\infty$. For this reason, we can view the percolation process as happening in discrete rounds $t=0,1,2,\dotsc$.
We  write
\[ a_t := |\{ x_i \mid i\in [n] \text{ and } t_i \le t \}| \]
for the number of vertices that are active after round $t$, and
\[a^* := \max{\{a_t\mid t\geq 0\}}\]
for the number of vertices at termination.

For $\tau=0$ (the $G_{n,p}$ case without inhibition),
the process was analyzed in great detail in~\cite{Janson2010}. Among other results, it was shown that
$\threshold(n,p,k,0)$ is the threshold for percolation in $G_{n,p}$, and moreover
that the process with $\start \leq (1-\eps)\threshold$ will a.a.s.~not even activate more than $k\start/(k-1)$
vertices. Moreover, the authors of~\cite{Janson2010} determined the typical number of rounds until percolation up to an additive constant.

In the case with inhibition, it is not clear that we percolate to a point where all (or at least most of) the excitatory vertices are active. Corollary~\ref{cor:basic} guarantees that inhibition essentially plays no role while we have at most $\delta/p$ active vertices, but from then on things may change. Our plan for the rest of this section is as follows. First we  show that we can describe the dynamics of the percolation process very precisely up to $\delta /p$ active vertices. Then we  show that this implies that the process with inhibition actually follows a complicated pattern, where the number of finally active vertices depends on the size on the starting set in a {\em non-monotone} way. We start by proving a concentration theorem.

\begin{theorem}\label{thm:concentration}
  For every $\eps > 0$ there exists $\delta=\delta(\eps,\gamma,k,\tau) > 0$ such that,
  for the sequence $(\hat a_t)_{t\geq 0}$ defined by  
  \begin{equation}\label{eq:defahat}
    \hat a_0 := \start \qquad\text{and}\qquad\hat{a}_{t+1} := \hat{a}_0 + (1-\tau)^knp^k\frac{\hat{a}_t^k}{k!}\text,
  \end{equation}
  the synchronous process
  with $\start\geq \max{\{(1+\eps)\threshold,(\log n)^{2+\eps}\}}$ and $p \gg n^{-1}$ a.a.s.~satisfies
  \[ (1-\eps)\hat{a}_t\leq a_t \leq (1+\eps)\hat{a}_t \]
  for all $t\ge 0$ such that $\hat{a}_t \leq \delta n$.
\end{theorem}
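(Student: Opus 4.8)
The goal is a two-sided concentration bound: the actual size $a_t$ of the active set after round $t$ is within a $(1\pm\eps)$ factor of the deterministic recursion $\hat a_t$, uniformly over all $t$ with $\hat a_t\le\delta n$. I would proceed by induction on $t$. The base case $t=0$ is trivial since $a_0=\hat a_0=\start$. For the inductive step, suppose $(1-\eps')\hat a_t\le a_t\le (1+\eps')\hat a_t$ for a slightly smaller $\eps'$ (one needs room to absorb the per-round error, so I would actually track a growing error parameter $\eps_t$, or equivalently set things up with a geometric series of slacks that sums to at most $\eps$). Conditioning on the history, i.e.\ on the value $s:=a_t$ and on the event $\mathcal E$ from Lemma~\ref{lemma:lower} that $|E_s|\in(1\pm\delta_0)(1-\tau)s$ for all $s$ (which holds a.a.s.\ by Lemma~\ref{lemma:basic1}, with $\delta_0=(\log n)^{-1-\eps/3}/(10k)$, using $\start\ge(\log n)^{2+\eps}$), the number of vertices activated in round $t+1$ is the number of currently-inactive $v$ with $N^+_s(v)\ge k + N^-_s(v)$. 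Since we are in the initial phase $s\ll 1/p$, the term $N^-_s(v)$ vanishes for all but $o(n)$ vertices, so to leading order the newly active set has the conditional distribution of $|\{v : N^+_s(v)\ge k\}|$, which by Remark~\ref{remark:cond} is a sum of independent indicators with success probability $\Pr[\Bin(|E_s|,p)\ge k] = (1+o(1))\binom{|E_s|}{k}p^k = (1+o(1))(1-\tau)^k s^k p^k/k!$ (the higher-order terms of the binomial tail being negligible since $sp\ll 1$). Hence $\EE[a_{t+1}\mid a_t=s,\mathcal E] = \start + (1+o(1))(1-\tau)^k n p^k s^k/k!$, which matches the recursion $\hat a_{t+1} = \hat a_0 + (1-\tau)^k np^k\hat a_t^k/k!$ after substituting $s=a_t\in(1\pm\eps_t)\hat a_t$ (the $s^k$ turns the $(1\pm\eps_t)$ into $(1\pm k\eps_t)$ roughly).

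**Concentration within a round.** Given $a_t=s$ and $\mathcal E$, the newly-activated count is a sum of independent Bernoulli variables (conditional independence of the $N^+_s(v)$ across $v$, Remark~\ref{remark:cond}), so Chernoff (Lemma~\ref{lemma:Chernoff}) gives that it deviates from its conditional mean by more than a $\delta$-fraction with probability at most $2\exp(-\delta^2\mu/3)$ where $\mu=\Theta(np^k s^k) = \Theta(s\cdot(s/\Lambda)^{k-1})$. Since $s\ge\start\ge(1+\eps)\threshold$, we have $s/\Lambda\ge(1-1/k)(1+\eps)$, so $\mu=\Omega(s)=\Omega(\start)=\Omega((\log n)^{2+\eps})$, and the failure probability is $\exp(-\Omega((\log n)^{1+2\eps/3}))=n^{-\omega(1)}$ — comfortably summable over the at most $n$ possible values of $s$ and the at most $O(\log\log n)$ rounds. (This is exactly the place where the hypothesis $\start\ge(\log n)^{2+\eps}$ is needed, and one can see that for $\start\le(\log n)^{2-\eps}$ the round-to-round fluctuations are of the same order as $\hat a_t$ itself, explaining the remark after the theorem that the condition is essentially best possible.) I would also need a matching upper bound: show that the vertices with a nonzero $N^-_s(v)$ contribute at most $o(\hat a_{t+1})$, which follows because $\EE[\#\{v:N^-_{10s}(v)\ge 1\}]\le n\cdot 10s\gamma p = o(n)$ for $s\ll 1/p$, again concentrated by Chernoff/Markov; this justifies that the $N^-$ correction is a genuine lower-order perturbation in both directions.

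**Stitching the rounds together and the main obstacle.** The delicate point is controlling the accumulation of multiplicative errors across rounds: each round the relative error can grow by a factor of roughly $k$ (from the $s^k$ in the recursion), and the number of rounds we must survive is $\Theta(\log\log n)$, so naively $k^{\Theta(\log\log n)}=(\log n)^{\Theta(1)}$ — a polylog blowup that would destroy the claim. The resolution is that the recursion $\hat a_t$ is \emph{super-exponentially expanding} once $s\gg\threshold$: writing $y_t:=\hat a_t/\threshold$ one has $y_{t+1}\approx c\, y_t^k$ with $c$ a constant bounded below, so after $O(1)$ rounds $y_t$ is already polynomially large and thereafter doubles in \emph{number of digits} each round, meaning only $O(\log\log n)$ rounds occur but the \emph{additive} contribution $\hat a_0$ to $\hat a_{t+1}$ becomes utterly negligible and the dynamics is essentially $\hat a_{t+1}\approx c\,\hat a_t^k$. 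For such a pure power recursion, a relative error $\eps_t$ satisfies $\eps_{t+1}\lesssim k\eps_t + (\text{fresh Chernoff error})$; but the fresh error at step $t$ is $\delta_t=\Theta(\mu_t^{-1/2+o(1)})$ with $\mu_t=\Omega(\hat a_t)$ growing super-exponentially, so $\sum_t k^{t'-t}\delta_t$ over the remaining rounds $t'$ is dominated by its first term and stays $o(1)$, while the very first $O(1)$ rounds (where $\hat a_t=\Theta(\threshold)$ and growth is only by a constant factor) are handled by Corollary~\ref{cor:babysteps}-type reasoning with constant $\eps$-room. So the right bookkeeping is: use a \emph{constant} slack $\eps/2$ for the $O(1)$ startup rounds, then show the explosion-phase errors form a convergent geometric-type series summing to $<\eps/2$; the main work is making this telescoping rigorous, in particular verifying that $\mu_t$ grows fast enough that $\sum k^{-(\text{remaining rounds})}\delta_t$ converges, and that the dropped $N^-$ and binomial-tail terms are uniformly $o(\eps\hat a_t)$ throughout. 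I would also need to take a union bound over the (finitely many, $\le n$) values of $s$ at which we apply the per-round concentration, so that the statement holds a.a.s.\ simultaneously for all rounds $t$ with $\hat a_t\le\delta n$; since each failure probability is $n^{-\omega(1)}$ this union bound is harmless.
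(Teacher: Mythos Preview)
Your overall architecture matches the paper's: induct on $t$, sandwich $a_{t+1}$ between a lower count $L_s=|\{v:N^+_s(v)=k,\,N^-_s(v)=0\}|$ and an upper count $U_s=|\{v:N^+_s(v)\ge k\}|$, apply Chernoff conditionally on $|E_s|$, and track how the multiplicative errors compound through the recursion $1+\eps_{t+1}=(1+\eta_t)(1+\eps_t)^k$. The identification of the $k^t$ amplification as the main obstacle, and of the doubly-exponential growth of $\hat a_t$ as the cure, is exactly right.

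There is, however, a genuine gap in your bookkeeping for the startup rounds. You propose to ``use a constant slack $\eps/2$ for the $O(1)$ startup rounds'' and then bound the explosion-phase contribution separately. This does not work: the error incurred in round $t$ is amplified by a factor $k^{\ell-t}$ by round $\ell$, and for $t=O(1)$ this factor is $\Theta(k^{\ell})=\Theta(\log(pn))$. So a constant fresh error in any early round blows up to $\Theta(\log n)$ by the end. You cannot decouple startup from explosion additively; the fresh slack $\eta_t$ must already be $o(1/\log n)$ in \emph{every} round, including the first few. The paper resolves this by taking a uniform Chernoff slack $\eta_0=(\log n)^{-1-\eps/3}$ and then verifying $\eta_0\sum_{i<t}k^{t-1-i}=\eta_0\cdot O(k^{\ell})=o(1)$.

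This forces a correction to your failure-probability claim. With $\eta_0=(\log n)^{-1-\eps/3}$ and $\mu\ge\start\ge(\log n)^{2+\eps}$, one gets $\eta_0^2\mu\ge(\log n)^{\eps/3}$, hence per-round failure $\exp(-\Omega((\log n)^{\eps/3}))$ --- which is $o(1)$ but certainly \emph{not} $n^{-\omega(1)}$. Consequently you cannot union-bound over all $n$ values of $s$; the paper instead applies the one-step bound only at the $2\ell=O(\log\log n)$ deterministic values $s_i^{(1)}=(1-\eps_i)\hat a_i$ and $s_i^{(2)}=(1+\eps_i)\hat a_i$, and the union bound over those is harmless.

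Finally, your treatment of the deterministic approximation errors (the $(1-p)^{s}$, $(1-\gamma p)^{s}$, and binomial-tail corrections) is too loose. These contribute a fresh relative error of order $\hat a_t p\cdot\max\{1,\gamma\}$ per round, not $o(1)$ uniformly: in the last rounds $\hat a_t p$ is $\Theta(\delta^{1/k})$, a constant. They must go through the \emph{same} $\sum_i k^{\ell-1-i}\cdot\hat a_i p$ telescoping as the Chernoff errors; the paper shows this sum is $O(\delta^{1/k})+o(1)$ by using that $\hat a_{i+1}/\hat a_i\ge 2k$ once $\hat a_i\ge 4k^3\Lambda$, so that the late terms form a geometric series dominated by $\hat a_{\ell-1}p=O(\delta^{1/k})$. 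This is precisely where the constant $\delta=\delta(\eps,\gamma,k,\tau)$ in the theorem statement comes from --- a point your sketch leaves implicit. Also, your crude bound $|\{v:N^-_s(v)\ge1\}|\le ns\gamma p$ is the wrong quantity for relative error; what matters is the fraction of the $U_s$-vertices that also have $N^-_s(v)\ge1$, which is $O(s\gamma p)$, and this feeds into the same telescoping sum.
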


One can show that the requirement $\start \geq (\log n)^{2+\eps}$ is tight in the following sense: if we have $\start < (\log n)^{2-\eps}$ for some constant $\eps>0$, then with non-negligible probability the number of active vertices after the first round will deviate from its expectation by a factor that, accumulated over many rounds, makes it impossible for such a statement to hold.
More precisely, assume that $\start = (1+\eps)\threshold \leq (\log n)^{2-\eps}$, then 
the expectation of $a_1$ is
\[\mathbb E[a_1] \approx \start+(1-\tau)^k np^k
\start^k/k!\stackrel{\eqref{eq:prop:box}}=  \Theta(\start).\] Let $\delta=
(\log n)^{\eps/3-1}\ll  \start^{-1/2}$. By the tightness of the Chernoff bound
(or by normal approximation), the probability that $a_1 > (1+\delta)\mathbb
E[a_1]$ is at least some constant.
By the definition of the sequence $\hat a_i$,
the factor $(1+\delta)$ will blow up at a doubly exponential rate,
and after $i$ rounds, the uncertainty on $a_{i}$ will
be $(1+\delta)^{\Theta(k^{i})}$. We will see
(cf.~Lemma~\ref{lemma:time-to-1/p}) that the number of rounds with $\hat
a_t\leq \delta n$ is $\ell=\log_k\log (n)-O(1)$. So the uncertainty after
$\ell$ rounds would be $(1+(\log n)^{\eps/2-1})^{\Theta(\log n)}\gg 1$, which
shows that it is impossible for $a_{\ell}$ to be concentrated around $\hat
a_\ell$.

%

\subsection{The speed of round-based percolation}

In Subsection~\ref{ssec:basic}, we introduced a general approach for proving that the percolation
progresses grows at least with a certain speed: if, at some point, there are $s$ active vertices, then
after waiting for a time period of length $r$, there will be at least $\min{\{\start+L_s(r),10s\}}$ active vertices.
In the case of synchronous percolation, we can strengthen (and simplify) this statement a bit. Define, for
every $s\in [n]$,
\[ L_s := |\{\start < v\leq n\mid N^+_s(v) = k \text{ and }N^-_{s}(v)=0|\text.\]
Note that in comparison to the definition of $L_s(r)$, we replaced the condition $N^-_{10s}(v)=0$ by $N^-_{s}(v)=0$ and omitted the condition on the random variables $\Phi_{iv}$. Nevertheless, due to the round-based nature of the synchronous process, we still can conclude:
if there are $s=a_t$ active vertices at time $t$, then at time $t+1$, there
will be at least $\start+L_s$ active vertices.

To prove concentration of the sequence $(a_t)_{t\geq 0}$,
we need to show that this lower bound for $a_{t+1}$ is more or less tight. To do this, we introduce
a second set of random variables. For every $s\in [n]$, define
\[ U_s := |\{\start < v\leq n\mid N^+_s(v) \geq k\}|\text.\]
With this definition, it is clear that if at some time $t$, there are $s=a_t$ active vertices,
then at time $t+1$, there will not be more than $\start+U_s$ active vertices. The next lemma
says that for all $s\ll 1/p$, the upper and lower bounds $U_s$ and $L_s$ are not likely to differ by much.

\begin{lemma}\label{lemma:basic2}
  There exists a positive constant $c=c(\tau,k)$ such that if $\start\geq \threshold$ and $p\geq n^{-1}$, then
  the following holds for every $2k^2/((1-\tau)\start)\leq \delta_0\leq 1/(30k)$.
  Let $\eta\in [10k\delta_0,1/2)$ and $\delta = \eta/(10k)$.
  Write
  $\mathcal{E}$ for the event that $|E_s|\in (1\pm\delta_0)(1-\tau)s$ holds for all $s\geq \start$.
  Then, for every $\start \leq s=x\Lambda\leq \min{\{\delta/p,\delta/(\gamma p)\}}$, we have
  \[ \Pr[L_s \geq (1- \eta)x^k\Lambda/k \mid\mathcal E] \geq 1-e^{-c\eta^2x^k\Lambda}\]
  and
  \[ \Pr[U_s \leq (1+ \eta)x^k\Lambda/k\mid \mathcal E] \geq 1-e^{-c\eta^2x^k\Lambda}\text.\]
\end{lemma}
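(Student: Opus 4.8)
The plan is to mirror the proof of Lemma~\ref{lemma:lower} almost verbatim, since $L_s$ is just $L_s(r)$ with the weakened conditions ``$N^-_s(v)=0$'' and no constraint on the delays. First I would fix $\start \le s = x\Lambda \le \min\{\delta/p,\delta/(\gamma p)\}$ and, for integers $(1-\tau)s/2 \le a \le s$ and $0 \le b \le s$, condition on the events $\mathcal E_s(a)=\{|E_s|=a\}$ and $\mathcal I_s(b)=\{|I_s|=b\}$. For a fixed vertex $\start < v \le n$, using the conditional independence in Remark~\ref{remark:cond}, I would write
\[ \Pr[N^+_s(v)=k \text{ and } N^-_s(v)=0 \mid \mathcal E_s(a)\cap \mathcal I_s(b)] = \binom{a}{k}p^k(1-p)^{a-k}(1-\gamma p)^b\text. \]
Since $b \le s \le \delta/(\gamma p)$ and $a \le s \le \delta/p$, the factors $(1-p)^{a-k}$ and $(1-\gamma p)^b$ are each $\ge (1-\delta)^{O(1)}$, and $\binom ak p^k \ge (1-k^2/a)a^kp^k/k! \ge (1-\delta)a^kp^k/k!$ because $k^2/a \le 2k^2/((1-\tau)\start) \le \delta_0 \le \delta$. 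This gives the analogue of \eqref{eq:lower}, namely $\EE[L_s \mid \mathcal E_s(a)\cap \mathcal I_s(b)] \ge (1-\delta)^{O(1)} n a^k p^k/k!$ (using $\start \le \delta n$ to replace $n-\start$ by $n$).

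Next, the events $\{N^+_s(v)=k \text{ and } N^-_s(v)=0\}$ for distinct $v$ are conditionally independent given $\mathcal E_s(a)\cap\mathcal I_s(b)$, so Lemma~\ref{lemma:Chernoff} (the lower tail) applied to $L_s$ gives, with $\mu_{a,b}$ the conditional expectation,
\[ \Pr[L_s < (1-\delta)\mu_{a,b} \mid \mathcal E_s(a)\cap\mathcal I_s(b)] < e^{-\delta^2\mu_{a,b}/3} \le e^{-\delta^2(1-\delta)^{O(1)}na^kp^k/(3k!)}\text. \]
Conditioning on $\mathcal E$ forces $a \in (1\pm\delta_0)(1-\tau)s \subseteq (1\pm\delta)(1-\tau)s$, so I can replace $a^k$ by $(1-\delta)^k(1-\tau)^ks^k$ in both the expectation and the exponent; then using $s = x\Lambda$ together with the identity \eqref{eq:prop:box}, $(1-\tau)^knp^k\Lambda^k/(k-1)! = \Lambda$, turns $n(1-\tau)^ks^kp^k/k! = (1-\tau)^knp^k\Lambda^kx^k/k! = x^k\Lambda/k$. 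Absorbing all the constant powers of $(1-\delta)$ into the bound $(1-\delta)^{14+k} \ge 1-\eta$ (valid for $k\ge 2$ by $\delta=\eta/(10k)$), and noting $(1-\delta)^{14+k}\ge 1/2$, yields $\Pr[L_s \ge (1-\eta)x^k\Lambda/k \mid \mathcal E] \ge 1-e^{-c\eta^2x^k\Lambda}$ for a suitable $c=c(\tau,k)$, after summing/averaging the conditional bounds over the admissible $a,b$ (the event $\mathcal E$ already restricts $a$ to the good range, and $b\le s\le 10s$ is automatic).

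For $U_s$ the argument is the symmetric upper-tail version and is in fact slightly simpler, since $U_s$ involves only $N^+_s(v)$, so no conditioning on $|I_s|$ is needed. For fixed $v$, $\Pr[N^+_s(v)\ge k \mid \mathcal E_s(a)] = \sum_{j\ge k}\binom aj p^j(1-p)^{a-j}$; the dominant term is $j=k$, and since $ap \le sp \le \delta$ the tail $j>k$ contributes a factor $(1+O(ap)) = (1+O(\delta))$, so $\Pr[N^+_s(v)\ge k\mid \mathcal E_s(a)] \le (1+\delta)^{O(1)}a^kp^k/k!$. Hence $\EE[U_s\mid\mathcal E_s(a)] \le (1+\delta)^{O(1)}na^kp^k/k!$, and the upper tail of Lemma~\ref{lemma:Chernoff} plus $a\le(1+\delta)(1-\tau)s$ on $\mathcal E$ and \eqref{eq:prop:box} gives $\Pr[U_s \le (1+\eta)x^k\Lambda/k\mid\mathcal E]\ge 1-e^{-c\eta^2x^k\Lambda}$, choosing $c$ small enough that $(1+\delta)^{14+k}\le 1+\eta$. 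The only mild subtlety — the main obstacle, such as it is — is bookkeeping the constant powers of $(1\pm\delta)$ so that they collapse cleanly into $1\pm\eta$ via the relation $\delta=\eta/(10k)$, and making sure the geometric-series tail bound on $j>k$ in the $U_s$ estimate really is controlled by $sp\le\delta$; both are routine given the hypotheses $s\le\min\{\delta/p,\delta/(\gamma p)\}$ and $k^2/a\le\delta$.
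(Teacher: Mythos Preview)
Your argument is correct and follows the same line as the paper. Two minor remarks. First, for $L_s$ you reprove Lemma~\ref{lemma:lower} from scratch; the paper simply observes that $L_s \ge L_s(1)$ (since in the synchronous model $\Pr[\Phi\le 1]=1$ and the condition $N^-_s(v)=0$ is weaker than $N^-_{10s}(v)=0$), so the $L_s$ bound is a one-line consequence of Lemma~\ref{lemma:lower}. Second, in your $U_s$ argument the Chernoff upper-tail bound gives an exponent $-\delta^2\mu_a/3$, so to land on $e^{-c\eta^2 x^k\Lambda}$ you also need a \emph{lower} bound $\mu_a=\Omega(x^k\Lambda)$; you do not state this explicitly, but it follows either from $L_s\le U_s$ and your $L_s$ computation, or directly from $\Pr[\Bin(a,p)\ge k]\ge\Pr[\Bin(a,p)=k]$. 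The paper makes this step explicit via Remark~\ref{rem:lsrexp}. Your geometric-series control of the tail $j>k$ is equivalent to the paper's citation of the bound $\Pr[W\ge b]\le(1+2np)\Pr[W=b]$ for $W\sim\Bin(n,p)$ with $np\le 1/2$.
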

\begin{proof}
  Fix some $\start\leq s=x\Lambda\leq \delta/p$.
  Since $L_s\geq L_s(1)$, the statement for $L_s$ follows directly from Lemma~\ref{lemma:lower}. 
  For the statement for $U_s$, given $0\leq a\leq s$,
  write $\mathcal{E}_s(a)$ for the event that $|E_s|=a$. By Remark~\ref{remark:cond} we know that, conditioned on $\mathcal E_s(a)$, the variable $U_s$
  follows a binomial distribution.
  In order to obtain an upper bound on $\mu_a := \EE[U_s \mid \mathcal E_s(a)]$ we use 
  the following property of the binomial distribution: if $W\sim \Bin(n,p)$
  with $np \le 1/2$, then we have (see for example~\cite{Bahadur1960})
  \begin{equation}\label{eq:binombound}
    \Pr[W \geq b] \leq (1+ 2np)\Pr[W =b]\qquad\forall b\geq 0\text.
  \end{equation}
  As  $a\leq s\leq \delta/p$ this bound implies that
  \begin{align*}
    \mu_a = (n-\start)\cdot \Pr[\Bin(a,p)\geq k]  \leq n\cdot (1+2pa)\cdot \frac{a^kp^k}{k!}\;\le \;
    (1+\delta)^{2}\cdot \frac{na^kp^k}{k!}\text.
  \end{align*}
  From here an application of the Chernoff bound (Lemma~\ref{lemma:Chernoff}) gives that
  for every $(1-\tau)s/2 \leq a \leq s$, we have
  \[\Pr[U_s >(1+\delta)^3 na^kp^k/k! \mid \mathcal E_s(a)]<e^{-\delta^2\mu_a/3}=
  e^{-\delta^2\Omega(x^k\Lambda)}\text,\]
  since $L_s\leq U_s$ implies that $\mu_a\geq \EE[L_s\mid \mathcal E_s(a)]=\Omega(x^k\Lambda)$, by Remark~\ref{rem:lsrexp}.
  Recall that the statement we want to prove conditions on the event $\mathcal E$, meaning that we can assume $a\in(1\pm\delta)(1-\tau)s$. The above bound thus implies
  \[\Pr[U_s >(1+\delta)^{3+k} n(1-\tau)^ks^kp^k/k! \mid \mathcal E]<
  e^{-\delta^2\Omega(x^k\Lambda)}\text,\]
  We have
  $(1+\delta)^{k+3}=(1+\eta/(10k))^{k+3}\leq 1+\eta$, for all $\eta\in (0,1)$ and $k\geq 1$.
  Then the lemma follows with an application of \eqref{eq:prop:box}.
\end{proof}

\subsection{The expected trajectory $(\hat{a}_t)_{t\geq 0}$}

Lemma~\ref{lemma:basic2} tells us that if there are $a_t=x\Lambda$ active vertices in round $t$, then in round $t+1$,
there will be
\[a_{t+1}\approx\start+\frac{x^k\thebox}{k}
=\start + (1-\tau)^knp^k\frac{a_t^k}{k!} \] active vertices, using \eqref{eq:prop:box}.
This motivates the definition of a sequence $(\hat a_t)_{t\geq 0}$ in equation \eqref{eq:defahat} in
Theorem~\ref{thm:concentration}.
Note that if we parametrize $\hat{a}_t = x \thebox$, we get
\begin{equation}\label{eq:growth}
  \hat{a}_{t+1} = \start+\frac{x^k}{k}\thebox\text.
\end{equation}

In the next lemma we establish a simple fact on the minimal growth of the sequence $(\hat{a}_t)_{t\geq 0}$. 
\begin{lemma}
  \label{lemma:ahatgrowth}
  For all $t\geq 0$, we have
  $\hat{a}_{t+1}/\hat{a}_t\geq (\start/\threshold)^{\frac{k-1}{k}}$.
\end{lemma}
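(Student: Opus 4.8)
The plan is to reduce the claim to a one-variable minimization. Parametrize $\hat a_t = x\thebox$ with $x = \hat a_t/\thebox > 0$; note that $\hat a_t \ge \hat a_0 = \start$ always, since the recursion \eqref{eq:defahat} only adds nonnegative terms, so in particular $x>0$ (the inequality will in fact hold for all $x>0$). Plugging \eqref{eq:prop:box} into \eqref{eq:defahat} — equivalently, using \eqref{eq:growth} — we have $\hat a_{t+1} = \start + x^k\thebox/k$, and hence
\[ \frac{\hat a_{t+1}}{\hat a_t} = \frac{\start}{x\thebox} + \frac{x^{k-1}}{k} \;=:\; g(x). \]
It therefore suffices to show $g(x) \ge (\start/\threshold)^{(k-1)/k}$ for every $x>0$.

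I would then minimize $g$. Since $k\ge 2$, both summands are convex on $(0,\infty)$, so $g$ is convex and attains its minimum at its unique critical point. Solving $g'(x)=0$, i.e.\ $\start/(x^2\thebox) = (k-1)x^{k-2}/k$, yields $x^k = \tfrac{k}{k-1}\cdot\tfrac{\start}{\thebox}$; using $\thebox = \tfrac{k}{k-1}\threshold$ from \eqref{eq:def:box}, this simplifies to $x^k = \start/\threshold$. Substituting back, and using $\thebox = \tfrac{k}{k-1}\threshold$ once more, a short computation gives $g(x) = \start/(x\threshold) = (\start/\threshold)^{1-1/k}$, which is exactly the asserted lower bound. (Alternatively, one can avoid calculus altogether: write $g(x) = (k-1)\cdot\tfrac{\start}{(k-1)x\thebox} + \tfrac{x^{k-1}}{k}$ and apply the AM--GM inequality to these $k$ summands; the powers of $x$ cancel inside the geometric mean, and one lands on $\big(\tfrac{k\start}{(k-1)\thebox}\big)^{(k-1)/k} = (\start/\threshold)^{(k-1)/k}$.)

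There is essentially no obstacle here. The only points requiring a little care are the bookkeeping between $\thebox$ and $\threshold$ through \eqref{eq:def:box} and \eqref{eq:prop:box}, and the observation that convexity of $x\mapsto x^{k-1}$ (equivalently, the admissibility of the weighting in the AM--GM step) genuinely uses $k\ge 2$; everything else is routine algebra.
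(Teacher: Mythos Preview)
Your proof is correct and follows essentially the same approach as the paper: parametrize $\hat a_t = x\thebox$, express $\hat a_{t+1}/\hat a_t$ via \eqref{eq:growth}, and minimize over $x>0$, finding the minimum at $x=(\start/\threshold)^{1/k}$ with value $(\start/\threshold)^{(k-1)/k}$. The paper merely states where the minimum lies; you additionally justify it by convexity and offer an AM--GM variant, which is fine but not a genuinely different route.
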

\begin{proof}
  Write $\hat{a}_t = x \thebox$.
  Then we obtain from \eqref{eq:growth} that
  \[ \hat{a}_{t+1}/\hat{a}_{t} = \frac{\start/\thebox + x^k/k}{x}\text. \]
  The minimum of this expression is achieved for $x = (\start/\threshold)^{1/k}$, where its value is
  $(\start/\threshold)^{\frac{k-1}k}$, completing the proof.
\end{proof}

The bounds from the previous lemma are weak, but nevertheless best possible:
the sequence $(\hat a_t)$ grows very slowly at the beginning. 
Once, however $\hat a_t$ is above, say, $2\thebox$, a doubly exponential growth kicks in, and implies
that the total number of rounds of the process is just doubly logarithmic, as our next lemma shows.

\begin{lemma}\label{lemma:time-to-1/p}
  For every $\eps>0$, there exists a constant $K=K(k,\eps)$ such that for all large enough $n\in \mathbb N$,
  the following holds, provided $\start\geq (1+\eps)\threshold$ and $p=\omega(n^{-1})$:
  \begin{enumerate}[(i)]
    \item $\hat{a}_t\geq n$ for all $t\geq \log_k\log_{(\start/\threshold)}(pn)+K$, and
    \item $\hat{a}_t\leq 1/p$ for all $t\leq \log_k\log_{(\start/\threshold)}(pn)-K$.
  \end{enumerate}
\end{lemma}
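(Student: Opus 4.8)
The plan is to analyze the recursion $\hat a_{t+1} = \start + (1-\tau)^k np^k \hat a_t^k/k!$ by tracking the parametrization $\hat a_t = x_t \thebox$, for which \eqref{eq:growth} gives $x_{t+1}\thebox = \start + x_t^k\thebox/k$, i.e.\ $x_{t+1} = \start/\thebox + x_t^k/k$. Since $\start/\thebox = (1-1/k)(\start/\threshold)\ge (1-1/k)(1+\eps)$ and, by Lemma~\ref{lemma:ahatgrowth}, the sequence is non-decreasing with ratio at least $(\start/\threshold)^{(k-1)/k}>1$, the key observation is that $(\hat a_t)$ splits into two regimes: a ``slow'' regime where $x_t = O(1)$ and the sequence merely multiplies by a constant factor $\beta:=(\start/\threshold)^{(k-1)/k}$ each round, and a ``fast'' regime where $x_t$ is large and $x_{t+1}\approx x_t^k/k$, giving genuinely doubly-exponential growth.

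For part (i), I would first show that after at most $O(1)$ rounds (a constant depending only on $\eps,k$, via Lemma~\ref{lemma:ahatgrowth} applied $K_0$ times) we reach $\hat a_t \ge 2\thebox$, i.e.\ $x_t\ge 2$. Once $x_t\ge 2$, from $x_{t+1}\ge x_t^k/k \ge x_t^k/2^{k-1}\cdot (2^{k-1}/k)$ one gets $x_{t+1}\ge x_t^{k}/k \ge (x_t/ c)^{k}$ for a suitable constant, so that $\log x_{t}$ grows like $k^t$ up to lower-order corrections; more cleanly, setting $y_t = \log_{(\start/\threshold)} x_t$ one shows $y_{t+1}\ge k y_t - C$ for a constant $C=C(k)$, hence $y_{t+1}+C/(k-1)\ge k(y_t + C/(k-1))$ and thus $y_t$ exceeds $\log_{(\start/\threshold)}(n/\thebox)$ — equivalently $\hat a_t\ge n$ — once $k^{t}\gtrsim \log_{(\start/\threshold)}(n p \cdot (\text{poly}))$, which after absorbing the $\thebox = (np^k\cdots)^{1/(k-1)}$ factors and constants into the additive $K$ gives $t\ge \log_k\log_{(\start/\threshold)}(pn) + K$. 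For part (ii) one argues symmetrically from above: as long as $\hat a_t\le 1/p$ we have $x_t\le \thebox^{-1}/p$, and the \emph{upper} bound $x_{t+1}\le \start/\thebox + x_t^k/k \le x_t^k$ (valid once $x_t$ is at least a constant, using $\start/\thebox\le \start$ and $p\ll n^{-1/k}$ so $\thebox\to\infty$) yields $y_{t+1}\le k y_t + C'$, hence $\hat a_t \le 1/p$ as long as $k^t \le \log_{(\start/\threshold)}(pn) - K'$, i.e.\ $t\le \log_k\log_{(\start/\threshold)}(pn)-K$; one must also handle the initial $O(1)$ rounds where $x_t=O(1)$ separately, but there $\hat a_t$ is still only a constant times $\thebox\le 1/p$ (for large $n$), so this is harmless after enlarging $K$.

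The main technical nuisance — not really an obstacle, but the step requiring care — is bookkeeping the additive constants and the base of the iterated logarithm: one needs the recursion $y_{t+1} = k y_t + O(1)$ (with matching upper and lower versions) to telescope to $y_t = k^{t-t_0} y_{t_0} + O(k^{t-t_0})$, and then to verify that the thresholds ``$\hat a_t\ge n$'' and ``$\hat a_t \le 1/p$'' both translate, after dividing by $\thebox = (\,(k-1)!/((1-\tau)^k np^k)\,)^{1/(k-1)}$ and taking $\log_{(\start/\threshold)}$, into ``$k^t$ is at least / at most $\log_{(\start/\threshold)}(pn)$ up to an additive $O(1)$''. The identity that makes this work is precisely \eqref{eq:prop:box}, $(1-\tau)^k np^k\thebox^k/(k-1)! = \thebox$, together with $n/\thebox = (np)\cdot \thebox^{k-2}\cdot(\cdots)$ — more transparently, $\log(n/\thebox) = \tfrac{1}{k-1}\log(np) + O(1)$ and $\log(1/(p\thebox)) = \tfrac{1}{k-1}\log(1/(pn)) \cdot(-1)+O(1) = \tfrac{1}{k-1}\log(np/\mathrm{const})\cdot$, so that both bounds involve the \emph{same} quantity $\log(np)$ up to the constant factor $1/(k-1)$ and an $O(1)$ error, which is exactly why the same expression $\log_k\log_{(\start/\threshold)}(pn)$ (with different additive constants $\pm K$) appears in (i) and (ii). Finally, I would note that the hypothesis $p = \omega(n^{-1})$ is used only to guarantee $pn\to\infty$ so that the iterated logarithm is well-defined and positive, and $\start\ge(1+\eps)\threshold$ is used to guarantee $\beta>1$ so the slow regime lasts only $O(1)$ rounds.
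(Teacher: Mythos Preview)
Your approach is essentially the same as the paper's: parametrize by $x_t=\hat a_t/\thebox$, use Lemma~\ref{lemma:ahatgrowth} to exit the slow regime in $O(1)$ rounds, and then exploit the doubly-exponential bounds $x_{t+1}\ge x_t^k/k$ (for (i)) and $x_{t+1}\le x_t^k$ (for (ii)) to pin down the number of rounds up to an additive constant, with the bookkeeping reducing to the observation that both $\log(1/(p\thebox))$ and $\log(n/\thebox)$ are $\Theta(\log(np))$. Two small fixes: the justification for $x_{t+1}\le x_t^k$ in (ii) should use $\start/\thebox = x_0\le x_t$ (so $\start/\thebox + x_t^k/k\le x_t + x_t^k/k\le x_t^k$ once $x_t\ge 2$) rather than ``$\thebox\to\infty$'', since $\start/\thebox$ need not be bounded; and $\log(n/\thebox)=\tfrac{k}{k-1}\log(np)+O(1)$, not $\tfrac{1}{k-1}$ --- though neither slip affects the additive-constant conclusion after taking $\log_k$.
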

\begin{proof}
  First observe that by Lemma~\ref{lemma:ahatgrowth}, there exists a constant $t_0=t_0(\eps,k)$ such that
  $\hat a_{t_0} \geq (\start k/\threshold)\thebox$. By \eqref{eq:growth}, we see in particular that
  $\hat a_t= x\thebox$ implies $\hat a_{t+1}\geq (x^k/k)\thebox$.
  Using induction we get that for all $t\ge 0$, we have
  \[ \hat{a}_{t_0+t} \geq
  \frac{(\start k/\threshold)^{k^t} \thebox}{k^{1+k+k^2+\dotsb+k^{t-1}}}
  \geq \left(\frac{\start k}{\threshold k}\right)^{k^t}\thebox
  =(\start/\threshold)^{k^t}\thebox\text.\]
  It follows that for all $t\geq \log_k\log_{(\start/\threshold)}(1/(p \thebox))$, we have
  $\hat{a}_{t_0+t} \geq 1/p$, and so 
  \[ \hat a_{t_0+t+1}\ge\start + (1-\tau)^kn/k! = \Omega(n)\text,\]
  whence $\hat a_{t_0+t+2} = \omega(n)$, using $p=\omega(n^{-1})$.
  Since $\log_k\log_{(\start/\threshold)}(1/(p \thebox))$
  is within a constant difference of
  $\log_k\log_{(\start/\threshold)}(pn)$,
  this proves (i).

  For (ii), we may assume, again by Lemma~\ref{lemma:ahatgrowth}, that there is some smallest constant
  $t_0\geq 0$ such that $2\Lambda \leq \hat a_{t_0}\leq 1/p$. Now if $\hat a_{t}=x\Lambda\geq \start$
  for some $x\geq 2$, then, using \eqref{eq:growth} and $k\geq 2$, we have
  \[ \hat a_{t+1}= \start + \frac{x^k\thebox}{k}\leq (x+x^k/k)\thebox\leq x^k\thebox\text. \]
  By induction, we thus have
  \[ \hat a_t\leq\hat a_{t_0+t} \leq (\hat a_{t_0}/\thebox)^{k^t}\thebox
  \leq (\hat a_{t_0}/\threshold)^{k^t}\thebox\]
  for all $t\geq 0$, and it follows that for all
  $t\leq \log_k\log_{(\hat a_{t_0}/\threshold)}(1/(p \thebox))$, we have
  $\hat{a}_{t} \leq 1/p$.
  If $t_0=0$, then $\hat a_{t_0}=\start$. If $t_0>0$, then
  $\hat a_{t_0}/\threshold = O(1)$.
  In both cases, (ii) follows easily.
\end{proof}

\subsection{Initial phases -- proof of Theorem~\ref{thm:concentration}}

Assume that $\start \geq (1+\eps)\threshold$ holds for some constant $\eps>0$.
We want to show that a.a.s.,
\[ (1-\eps)\hat a_t \leq a_t\le (1+\eps)\hat a_t\]
holds for all $t\geq 0$ such that $\hat a_t \le \delta n$, where
$\delta=\delta(\eps,\gamma,k,\tau)$ is some positive constant.
The idea is to proceed by induction over $t$. Recall that for $t=0$ we have $\hat a_0 = a_0=\start$ by definition, so the base case is settled. The difficulty in the induction step is that
from one round to the next the error bounds that we can prove will worsen. Therefore, instead of showing $a_t \in (1\pm \eps) \hat a_t$, we need to show $a_t \in (1\pm \eps_t) \hat a_t$ for an appropriate sequence $(\eps_t)_{t\geq 0}$. Here is how we choose this sequence:
set $\eta_0 = (\log n)^{-1-\eps/3}$ and define $\eta_t$ for $t\geq 1$ by
\[1+\eta_t := (1+\eta_0)(1+ 20k\hat a_tp\cdot \max{\{1,\gamma\}}) \le 1+ \eta_0 + 40k\hat a_tp\cdot \max{\{1,\gamma\}}\text.\]
Finally, define the sequence $(\eps_t)_{t\geq 0}$ recursively by 
$$\eps_0 := 0\qquad\text{and}\qquad
1+\eps_t:= (1+\eta_{t-1})\cdot (1+\eps_{t-1})^k = \prod_{i=0}^{t-1}(1+\eta_i)^{k^{t-1-i}}\quad\text{for }t\ge 1,$$
where the last equality follows from a straightforward induction. 
Recall that we assume that $\start \geq (\log n)^{2+\eps}$, so that we have in particular that
$\eta_0^2\start \geq (\log n)^{\eps/3} = \omega(-\log{\eta_0})$.

\begin{lemma}\label{l:thm1:defs}
  For every $\eps>0$, there exists $\delta=\delta(\eps,\gamma,k,\tau)>0$ such that the following holds, assuming
  that $\start\geq (1+\eps)\threshold$ and $p=\omega(n^{-1})$.
  Define $(\eps_t)_{t\geq 0}$ and $(\eta_t)_{t\geq 0}$  as above and
  write $\ell$ for the largest positive integer such that $\hat a_\ell\leq \delta n$.
  Then 
  \begin{enumerate}[(i)]
    \item $\eps_t\leq \eps$ for all $0\leq t \leq \ell$,
    \item  $\ell= O(\log \log n)$, and
    \item $\eta_t\cdot \min{\{1,\gamma^{-1}\}}\geq (1+\eps_t)10k\hat a_tp$ holds for all $0\leq t \leq \ell$.
  \end{enumerate}
\end{lemma}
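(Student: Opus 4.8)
The plan is to handle the three claims in increasing order of difficulty. Claim~(ii) is immediate: by Lemma~\ref{lemma:time-to-1/p}(i) we have $\hat a_t\ge n$ as soon as $t\ge\log_k\log_{(\start/\threshold)}(pn)+K$, so $\hat a_\ell\le\delta n<n$ forces $\ell<\log_k\log_{(\start/\threshold)}(pn)+K$; since $\start/\threshold\ge 1+\eps$ and $pn\ll n^{1-1/k}$ this is $O(\log_k\log n)=O(\log\log n)$. Along the way I would record the quantitative version $k^\ell\le k^{K}\ln(pn)/\ln(1+\eps)$, which is the key input for~(i) (it gives both $k^\ell=O(\log n)$ and $k^\ell=O(\ln(np))$).

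For~(i), the idea is to take logarithms in the closed form $1+\eps_t=\prod_{i=0}^{t-1}(1+\eta_i)^{k^{t-1-i}}$ and use $\log(1+\eta_i)\le\eta_i\le\eta_0+40k\max\{1,\gamma\}\,\hat a_i p$, reducing~(i) to showing that $\eta_0\sum_{i=0}^{t-1}k^{t-1-i}+40k\max\{1,\gamma\}\,p\sum_{i=0}^{t-1}k^{t-1-i}\hat a_i\le\log(1+\eps)$ for all $0\le t\le\ell$. The first sum is at most $\eta_0 k^\ell/(k-1)=O\big((\log n)^{-\eps/3}\big)=o(1)$, using $\eta_0=(\log n)^{-1-\eps/3}$ and $k^\ell=O(\log n)$. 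For the second sum I would split at the (constant, by Lemma~\ref{lemma:ahatgrowth}) first index $t_1$ with $\hat a_{t_1}\ge x_0\thebox$, where $x_0=(2k^2)^{1/(k-1)}$: for $i\ge t_1$, equation~\eqref{eq:prop:box} gives $\hat a_{i+1}/\hat a_i\ge(\hat a_i/\thebox)^{k-1}/k\ge 2k$, so the geometric weight $k^{t-1-i}$ is beaten and $\sum_{i\ge t_1}k^{t-1-i}\hat a_i\le 2\hat a_{t-1}$, while $p\hat a_{t-1}\le p\hat a_{\ell-1}\le(\delta k!)^{1/k}/(1-\tau)$ (because $\hat a_{\ell-1}^k\le\hat a_\ell\,k!/((1-\tau)^knp^k)\le\delta n\,k!/((1-\tau)^knp^k)$); for $i<t_1$ one has $\hat a_i<x_0\thebox$, so $p\sum_{i<t_1}k^{t-1-i}\hat a_i\le t_1 x_0\thebox p\,k^\ell=O\big(\ln(np)/(np)^{1/(k-1)}\big)=o(1)$, since $\thebox p=((k-1)!/((1-\tau)^knp))^{1/(k-1)}$ and a power of $np$ beats a logarithm. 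So the second sum is $o(1)+O(\delta^{1/k})$, and choosing $\delta=\delta(\eps,\gamma,k,\tau)$ small enough makes the total $<\log(1+\eps)$ (indeed $<\log 2$) for all large $n$, which yields $\eps_t<\eps$ and even $\eps_t<1$.

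For~(iii), when $1\le t\le\ell$ the definition $1+\eta_t=(1+\eta_0)(1+20k\max\{1,\gamma\}\hat a_tp)$ already gives $\eta_t\ge 20k\max\{1,\gamma\}\hat a_tp$, hence $\eta_t\min\{1,\gamma^{-1}\}\ge 20k\hat a_tp\ge(1+\eps_t)10k\hat a_tp$ because $\eps_t<1$ by~(i) (here $\max\{1,\gamma\}\min\{1,\gamma^{-1}\}=1$). The case $t=0$ reduces to $\start p\max\{1,\gamma\}\le\eta_0/(10k)$, i.e.\ $\hat a_0 p$ being much smaller than $\eta_0$; this is the one spot where the interplay of the standing hypotheses ($\start\ge(1+\eps)\threshold$, $\start\ge(\log n)^{2+\eps}$, and the allowed range of $p$) is needed to pin $\start p$, which is of order $\thebox p$, below $\eta_0$.

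The step I expect to be the main obstacle is the estimate of $p\sum_{i=0}^{t-1}k^{t-1-i}\hat a_i$ in~(i). A uniform bound on $\hat a_ip$ over the whole range, times $\sum k^{t-1-i}\asymp k^\ell\asymp\log n$, is hopelessly lossy, and telescoping alone does not apply because $\hat a_{i+1}/\hat a_i$ need not exceed $k$ at the start; the point is the dichotomy between the constant-length slow-growth prefix (where $\hat a_i=\Theta(\thebox)$, but $\thebox p\cdot k^\ell\to0$ since $\thebox p\asymp(np)^{-1/(k-1)}$ decays polynomially in $np$) and the fast-growth tail (where the per-round ratio exceeds $2k$, collapsing the sum to $O(\hat a_{t-1})$ with $p\hat a_{t-1}$ a constant controllable through $\delta$). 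Making both pieces simultaneously small is exactly what forces the choice of $\delta$ in terms of $\eps,\gamma,k,\tau$.
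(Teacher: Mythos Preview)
Your approach is essentially identical to the paper's: (ii) via Lemma~\ref{lemma:time-to-1/p}(i); (i) by taking logs and splitting $\sum k^{t-1-i}\hat a_i$ into a bounded-length prefix with $\hat a_i=\Theta(\Lambda)$ (contributing $O(\Lambda p\,k^\ell)=o(1)$) and a tail where $\hat a_{i+1}/\hat a_i\ge 2k$ collapses the sum to $O(\hat a_{t-1})$ with $p\hat a_{\ell-1}\le(\delta k!)^{1/k}/(1-\tau)$; and (iii) for $t\ge1$ directly from the definition of $\eta_t$ together with $\eps_t<1$. The split thresholds differ cosmetically (your $x_0=(2k^2)^{1/(k-1)}$ versus the paper's $4k^3$), but the argument is the same.

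Regarding the one point you flagged: the paper's own proof of (iii) simply writes $\eta_t\min\{1,\gamma^{-1}\}\ge 20k\hat a_tp$ and declares (iii) proved, without singling out $t=0$; so the very case you identify as the obstacle is glossed over there as well. Your proposed resolution, that ``$\start p$ is of order $\Lambda p$'', requires $\start=O(\threshold)$, which is \emph{not} among the lemma's hypotheses (the lemma places no upper bound on $\start$ beyond what is implicit in $\ell$ being a positive integer). With only $\start\ge(1+\eps)\threshold$, $\start\ge(\log n)^{2+\eps}$, and $p=\omega(n^{-1})$, one can have $\start p$ bounded away from zero while $\eta_0\to0$, so the inequality $\eta_0\min\{1,\gamma^{-1}\}\ge 10k\start p$ need not hold. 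In the paper's applications $\start=\Theta(\threshold)$ (Theorem~\ref{thm:synchron}(ii)) or $\start\le\delta/p$ (Lemma~\ref{lemma:percolation}), so the issue is harmless in context, but as a self-contained lemma the $t=0$ case of (iii) is not established by either argument.
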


We defer the technical proof of this lemma to the end of this subsection and first show how it can be used in order to complete the proof of Theorem~\ref{thm:concentration}.
\begin{proof}[Proof of Theorem~\ref{thm:concentration}]
  Assume the sequences $(\eta_t)_{t\geq 0}$ and $(\eps_t)_{t\geq 0}$ are defined as above.
  As in the statement of Lemma~\ref{l:thm1:defs}, we define $\ell$ to be the largest positive integer
  $t$ for which $\hat a_t \leq \delta n$, for some sufficiently small positive constant $\delta=\delta(\eps,\gamma,k,\tau)$. By Lemma~\ref{l:thm1:defs}
  (ii) and since $\eta_0^2\start \geq (\log n)^{\eps/3}$, we know in particular that $\ell e^{-c\eta_0^2\start}=o(1)$ for any constant $c>0$.
  For every $i\geq 0$, let $\delta_i = \eta_i/(10k)$. Write $\mathcal E$ for the event that
  $|E_s|\in (1\pm \delta_0)(1-\tau)s$
  holds for all $s\geq \start$.

  Let $x_i := \hat a_i/\Lambda$ and let $s_i^{(1)} := (1-\varepsilon_i)\hat
  a_i$ and $s_i^{(2)} := (1+\varepsilon_i)\hat a_i$.
  Observe that by \eqref{eq:growth} and Lemma~\ref{lemma:ahatgrowth}, we have
  \[  x_i^k\Lambda/k = \hat a_{i+1}-\start \geq \varepsilon \start. \]
  Note that  Lemma~\ref{l:thm1:defs} (iii) implies that $s_i^{(1)},s_i^{(2)} \le
  \min{\{\delta_i/p,\delta_i/(\gamma p)\}}$. One easily checks that the other conditions
  of Lemma~\ref{lemma:basic2} are met, so we obtain that there is a constant $c >0$ such that
  \[ \Pr[L_{s_i^{(1)}} \geq (1-\eta_i)(1-\varepsilon_i)^k(\hat a_{i+1}-\start) \mid \mathcal E]
  \geq 1- e^{-c\eta_0\start} \]
  and
  \[ \Pr[U_{s_i^{(2)}} \leq (1+\eta_i)(1+\varepsilon_i)^k(\hat a_{i+1}-\start) \mid \mathcal E]
  \geq 1- e^{-c\eta_0\start}. \]
  We have $(1+\eta_i)(1+\varepsilon_i)^k= 1+\varepsilon_{i+1}$
  and one can see that this implies that
  $(1-\eta_i)(1-\eps_i)^k \geq 1-\varepsilon_{i+1}$.
  Moreover, by Lemma~\ref{lemma:basic1} and the fact that $\delta_0^2\start =
  \omega(-\log{\delta_0})$, we have $\Pr[\mathcal E]= 1-o(1)$.
  Thus, by the union bound, with probability $1-2\ell e^{-c\eta_0\start} = 1-o(1)$, we have
  \begin{equation}\label{eq:lsus}
    L_{s_i^{(1)}} \geq (1-\varepsilon_{i+1})(\hat a_{i+1}-\start)\quad\text{ and }\quad
    U_{s_i^{(2)}} \leq (1+\varepsilon_{i+1})(\hat a_{i+1}-\start)
  \end{equation}
  for all $0 \leq i < \ell$. In the following, we assume that this is the case.

  We now prove by induction that for each $0\leq i \leq \ell$, we have
  \begin{equation}\label{eq:conc}
    (1-\varepsilon_i) \hat a_i \leq a_i \leq (1+\varepsilon_i)\hat a_i.
  \end{equation}
  Note that by Lemma~\ref{l:thm1:defs} (i), this will complete the proof.
  Since $a_0 = \hat a_0$, Equation~\eqref{eq:conc} holds trivially for $i=0$. For the induction,
  assume that it holds for a given $i\geq 0$, that is, assume $s_i^{(1)} \leq a_i \leq s_i^{(2)}$.
  Then by the definition of the sets $L_{s_i^{(1)}}$ and $U_{s_i^{(2)}}$, we have
  \[\start + L_{s_i^{(1)}} \leq a_{i+1} \leq \start + U_{s_i^{(2)}}. \]
  By~\eqref{eq:lsus}, this implies
  \[ (1-\varepsilon_{i+1})\hat a_{i+1} \leq a_{i+1} \leq (1+\varepsilon_{i+1})\hat a_{i+1},\]
  completing the proof.
\end{proof}

\begin{proof}[Proof of Lemma~\ref{l:thm1:defs}]
  By Lemma~\ref{lemma:time-to-1/p}, we know that $\ell\le \log_k\log_{(\start/\threshold)}(np) +K$,
  for some constant $K=K(k,\eps)$, so (ii) is immediate.

  To prove (i), fix some $0\leq t\leq \ell$.
  Using the fact that $\log(1+x)\leq x$ holds for all $x>-1$, we get
  \[\log(1+\eps_t) = \sum_{i=0}^{t-1}k^{t-1-i}\log(1+\eta_i) \leq \eta_0\sum_{i=0}^{t-1}k^{t-1-i} + \max{\{1,\gamma\}}\cdot 40kp\sum_{i=0}^{t-1}k^{t-1-i}\hat a_i\text.\]
  We bound the two terms individually. Since $\start\geq (1+\eps)\threshold$, we have
  \[\eta_0 \sum_{i=0}^{t-1}k^{t-1-i} = \eta_0 \frac{k^t-1}{k-1}\leq \eta_0 \frac{k^{\ell}-1}{k-1}\leq
  \frac{k^{K(k,\eps)}\log_{(\start/\threshold)}(np)-1}{(\log{n})^{1+\eps/3}(k-1)} = o(1)\text.\]
  Now consider the smallest integer $t_0\geq 0$ such that $\hat a_{t_0} > 4k^3\Lambda$. By Lemma~\ref{lemma:ahatgrowth},
  we know that $t_0$ is bounded. Thus, using the upper bound on $t$, \eqref{eq:def:box} and $pn =\omega(1)$ we obtain
  \[\max{\{1,\gamma\}}\cdot 40kp\sum_{i=0}^{t_0-1}k^{t-1-i}\hat a_i = \Theta(p \Lambda k^{t}) = O(\log(pn) (pn)^{-1/(k-1)}) = o(1) \]
  and it thus remains to bound the quantity $\max{\{1,\gamma\}}\cdot40kp\sum_{i=t_0}^{t-1}k^{t-1-i}\hat a_i$.

  By~\eqref{eq:growth} we have ${\hat a_{i+1}}/{\hat a_i}\geq
  (4k^3)^{k-1}/k\geq 2k$ for every $i\geq t_0$. By induction, it follows
  that for every $t_0\leq i< t-1$, we have $\hat a_{t-1}\geq
  (2k)^{t-1-i}\hat a_{i}$. Moreover, by the definition \eqref{eq:defahat}
  of $\hat a_{\ell}$ 
  \[\delta n \geq \hat a_{\ell} = \start + (1-\tau)^knp^k\frac{\hat a_{\ell-1}^k}{k!}\text,\]
  implying that for large enough $n$
  \[ \hat a_{t-1}\leq \hat a_{\ell-1} \leq\left(k!\frac{\delta n -\start}{(1-\tau)^knp^k}\right)^{1/k} \le
  \frac{\delta^{1/k}k}{(1-\tau)p}\text.\]
  We get
  \[ 40kp\sum_{i=t_0}^{t-1}k^{t-1-i}\hat a_i\le
  40kp\sum_{i=t_0}^{t-1}k^{t-1-i}(2k)^{-(t-1-i)}\hat a_{t-1} \leq 40kp\hat
  a_{t-1}\leq 40k^2\delta^{1/k}/(1-\tau)\text. \]
  Therefore, if $\delta$ is small enough, then $\log(1+\eps_t)\le \log(1+\eps)$, and so $\eps_t\leq \eps$,
  which proves (i).

  By (i), we have $\eps_t\leq \eps<1$, and so
  \[\eta_t\cdot\min{\{1,\gamma^{-1}\}} \geq 20k\hat a_tp >(1+\eps_t)10k\hat a_t p,\] proving (iii).
\end{proof}

\subsection{End phase -- proof of Theorem~\ref{thm:synchron}}
\label{sec:endphase}

In this subsection, we will study the effect of the inhibition parameter $\tau$ on the number of active
vertices at termination.
Theorem~\ref{thm:concentration} shows in particular that the process does not stop while $a_t=o(1/p)$ (since $a_t=o(1/p)$ implies $a_{t+1}=o(n)$), and the growth of the process during that time does not depend in any significant way on the number of inhibitory vertices.
The situation changes during the very last rounds.

\begin{lemma}
  \label{lemma:percolation}
  For every $\eps>0$ there exists a $\delta=\delta(\eps,\gamma,k,\tau)>0$ such that the synchronous bootstrap percolation
  process satisfies the following, assuming $\max{\{(1+\eps)\threshold,(\log n)^{2+\eps}\}}\leq \start\leq \delta/p$
  and $p \gg n^{-1}$. Let $\ell$ denote the the largest positive integer such that $\hat a_\ell \leq \delta n$.
  \begin{enumerate}[(i)]
    \item If $\tau < 1/(1+\gamma)$ then a.a.s.\ the process almost percolates in at most $\ell +2$ rounds.
      If moreover $p \gg \log n/n$, then the process completely percolates in at most $\ell+2$ rounds.
    \item If $\tau > 1/(1+\gamma)$ and $p\gg \log n/n$, then there exists some constant $C=C(\tau,\gamma)>0$ such that
      if $\hat a_{\ell}\geq C(\log{n})/p$, then a.a.s.\ the process stops with $(1-\eps)\hat a_{\ell}\leq a^* \leq (1+\eps)\hat a_{\ell}$.
    \item If $\tau > 1/(1+\gamma)$ and $p\gg \log n/n$, then for every $\alpha>0$, there exists a constant
      $C'>0$ such that if $C'/p\le \hat a_{\ell} \le \alpha n/(1+\eps)$, then a.a.s.~the process stops
      with $a^*\leq \alpha n$.
  \end{enumerate}
\end{lemma}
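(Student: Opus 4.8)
The plan is as follows. All three parts rest on the same two reductions: by Theorem~\ref{thm:concentration} I may assume $a_\ell\in(1\pm\eps/2)\hat a_\ell$, and by Lemma~\ref{lemma:basic1}, applied with a small \emph{constant} $\delta_0=\delta_0(\tau,\gamma)$ (legitimate since $\start\geq\threshold=\omega(1)$), I may condition on the event $\mathcal E$ that $\abs{E_s}\in(1\pm\delta_0)(1-\tau)s$ and $\abs{I_s}\in(1\pm\delta_0)\tau s$ for all $s\geq\start$. Set $\tilde U_s:=\abs{\{\start<v\leq n\mid N^+_s(v)-N^-_s(v)\geq k\}}$, the end-phase analogue of the variable $U_s$ from Lemma~\ref{lemma:basic2}. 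Since the vertices $1,\dots,\start$ are active from round $0$, the process definition gives, deterministically, $a_{t+1}\leq a_t+\tilde U_{a_t}$ and — looking one round ahead — $a_{t+2}\geq\start+\tilde U_{a_{t+1}}$ for all $t$. By Remark~\ref{remark:cond}, given $\abs{E_s}=e$ the variable $\tilde U_s$ is $\Bin(n-\start,\Pr[\Bin(e,p)-\Bin(s-e,\gamma p)\geq k])$; decomposing $\{N^+-N^-\geq k\}$ into ``$N^+$ too large'' and ``$N^-$ too small'' and applying Lemma~\ref{lemma:Chernoff}, with $\delta_0$ small relative to the \emph{fixed} gap $\abs{1-\tau(1+\gamma)}$ so that a constant fraction of the drift $(1-\tau(1+\gamma))sp$ survives, one obtains constants $c,s_0,\lambda_0>0$ (depending only on $\tau,\gamma,k$) such that, on $\mathcal E$ and for $sp\geq\lambda_0$: if $\tau>1/(1+\gamma)$ each not-yet-active vertex activates in the next round with probability at most $2e^{-csp}$; if $\tau<1/(1+\gamma)$ it activates with probability at least $1-2e^{-csp}$ for $sp\geq s_0$, and with probability at least a positive constant for $\lambda_0\leq sp\leq s_0$ (using the cruder bound $\Pr[\Bin(e,p)=k]\,(1-\gamma p)^{s-e}$).

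For part~(i) the drift is positive. As $\ell$ is the last index with $\hat a_\ell\leq\delta n$, Lemma~\ref{lemma:time-to-1/p} and \eqref{eq:prop:box} force $\hat a_\ell\geq c'/p$, so $a_\ell p=\Omega(1)$; combining the two lower bounds on the activation probability then gives $\EE[\tilde U_{a_\ell}\mid\mathcal E]=\Omega(n)$, whence Lemma~\ref{lemma:Chernoff} yields $a_{\ell+1}\geq\tilde U_{a_\ell}=\Omega(n)$ a.a.s. Now $a_{\ell+1}p=\omega(1)$, so each $v>\start$ fails to activate in round $\ell+2$ with probability $\leq 2e^{-\Omega(np)}$; hence $n-a_{\ell+2}\leq n-\start-\tilde U_{a_{\ell+1}}$ has expectation $\leq 2ne^{-\Omega(np)}$, which is $o(1)$ when $p\gg\log n/n$ (complete percolation within $\ell+2$ rounds) and $o(n)$ when $p\gg 1/n$ (almost percolation within $\ell+2$ rounds, by Markov). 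For part~(ii) the drift is negative: if $\hat a_\ell\geq C(\log n)/p$ for $C=C(\tau,\gamma)$ large enough, then $a_\ell p\geq\tfrac12 C\log n$, so the activation probability is at most $2e^{-cC(\log n)/2}\leq n^{-2}$ and $\EE[\tilde U_{a_\ell}\mid\mathcal E]\leq n^{-1}$; the union bound gives a.a.s.\ $\tilde U_{a_\ell}=0$, so $a_{\ell+1}=a_\ell$, the process stops, and $a^*=a_\ell\in(1\pm\eps)\hat a_\ell$.

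For part~(iii), with negative drift again, I would first prove a \emph{no-overshoot lemma}: a.a.s.\ $\tilde U_s\leq\beta(s):=\max\{5ne^{-csp},\,n^{2/3}\}$ for all $s$ with $sp\geq\lambda_0$, and $\tilde U_s=0$ once $sp\geq C_0\log n$ (for a suitable constant $C_0$). This follows by applying Lemma~\ref{lemma:Chernoff} to $\Bin(n-\start,2e^{-csp})$ for each $s$ and a union bound over $\start\leq s\leq n$; the one delicate point is that in the ``dead zone'' $sp\geq C_0\log n$ one sums the geometric series $\sum_s 2ne^{-csp}=o(1)$ rather than bounding each summand by its maximum (which loses a factor $n$). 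Given this, and using $a_{t+1}\leq a_t+\tilde U_{a_t}$, choose $C'$ so large that $5e^{-(1-\eps)cC'}<\alpha/2$. Then $C'/p\leq\hat a_\ell$ gives $a_\ell p\geq(1-\eps)C'$, and since $a_t$ is non-decreasing, every round adds at most $\max\{5ne^{-(1-\eps)cC'},n^{2/3}\}\leq\tfrac\alpha2 n$ new vertices. If some round is the first to attain $a_tp\geq C_0\log n$, the process stops there, having overshot a value below $C_0\log n/p=o(n)$ (valid since $np\gg\log n$) by at most $\tfrac\alpha2 n$, so $a^*\leq o(n)+\tfrac\alpha2 n\leq\alpha n$; otherwise $a_tp<C_0\log n$ throughout, so $a^*\leq C_0\log n/p=o(n)\leq\alpha n$. (If $a_\ell\geq\alpha n$ already, then automatically $a_\ell p\gg\log n$ and the process stops at once with $a^*=a_\ell\leq(1+\eps)\hat a_\ell\leq\alpha n$.)

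I expect the main obstacle to be exactly this union bound over $s$ in part~(iii): a naive estimate loses a factor $n$, and one must exploit that the relevant tail probabilities form a geometric series in the dead zone $sp\gtrsim\log n$, inserting an $n^{2/3}$ floor in the intermediate band where the expected overshoot is polynomial but sublinear. A recurring minor technicality, common to all three parts, is choosing $\delta_0$ in Lemma~\ref{lemma:basic1} small enough — relative to the fixed positive gap $\abs{1-\tau(1+\gamma)}$ — that the deviations of $\abs{E_s}$ and $\abs{I_s}$ never reverse the sign of the effective drift, which is what makes the dichotomy between $\tau<1/(1+\gamma)$ and $\tau>1/(1+\gamma)$ decisive.
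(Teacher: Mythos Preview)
Your argument is correct, and for parts (i) and (ii) it is essentially identical to the paper's: reduce via Theorem~\ref{thm:concentration} and Lemma~\ref{lemma:basic1}, then use Chernoff on $N^\pm_s(v)$ to show that for $s=a_\ell=\Omega(1/p)$ each vertex activates in the next round with constant probability when $\tau<1/(1+\gamma)$ (giving $a_{\ell+1}=\Omega(n)$ and then almost/complete percolation at round $\ell+2$), and with probability $\ll n^{-1}$ when $\tau>1/(1+\gamma)$ and $\hat a_\ell\geq C(\log n)/p$ (so the process stops at $a_\ell$).

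For part (iii) your route works but is more elaborate than necessary. The paper avoids the uniform ``no-overshoot lemma'' and the $n^{2/3}$ floor entirely. Instead it argues as follows: once $a_t\geq \alpha n/2$, the argument of (ii) applies verbatim (since $p\gg\log n/n$ gives $a_tp\gg\log n$) and the process stops in that round. So let $t_0$ be the first round with $a_{t_0}\geq\alpha n/2$; it suffices to show $a_{t_0}\leq \alpha n$, i.e., that every round with $a_t\geq a_\ell$ adds at most $\alpha n/2$ new vertices. For this the paper simply chooses $C'$ so large that $\Pr[N^+_s(v)\geq N^-_s(v)]\leq\alpha/4$ for all $s\geq (1-\eps)C'/p$, applies Chernoff to get $\Pr[a_{t+1}-a_t\geq\alpha n/2]=o(n^{-1})$ for each such round, and takes a union bound over at most $n$ rounds. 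This sidesteps the delicate union over $s$ that you flagged: the stopping time $t_0$ absorbs the ``dead zone'', and the per-round bound $\alpha n/2$ is constant rather than $s$-dependent, so no geometric-series trick is needed. Your version buys a finer pointwise control of $\tilde U_s$, but that extra information is not used in the conclusion.
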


Some remarks are in order. By Lemma~\ref{lemma:time-to-1/p}, we already know that 
$\ell$ is, up to an additive constant, at most $\log_k\log_{(\start/\threshold)}(pn)$. 
Then (i) shows that the number of rounds to percolation a.a.s.~takes one of only two possible (deterministic) values $\ell+1$ and $\ell+2$. If $\hat a_\ell > C (\log n)/p$, then the proof actually implies that a.a.s.~the process percolates in exactly $\ell+1$ rounds.

Lemma~\ref{lemma:percolation} spares out the border cases \emph{(a)} $\tau= 1/(1+\gamma)$, and
\emph{(b)} $\tau>  1/(1+\gamma)$ and $\hat{a}_{\ell} \leq C'/p$.
We also do not determine the size of the final
active set for the regime $\hat{a}_{\ell} \leq C(\log n)/p$. These regimes show a slightly richer, but also
more complicated behavior. Here even a harmless factor of $1+o(1)$ in the size of the
starting set can shift\footnote{We do not give a formal proof of this fact, it follows essentially from the calculations in the proof of Theorem~\ref{thm:synchron} below.} the size of the $\ell$-th set from $\Theta(1/p)$ to $\omega((\log n)/p)$, so every effect
that depends on the property $C'/p \leq \hat{a}_{\ell} \leq C(\log n)/p$ should be considered unstable. 

\begin{proof}[Proof of Lemma~\ref{lemma:percolation}]
  From Theorem~\ref{thm:concentration} we know that we can choose $\delta > 0$ such that
  \[(1-\eps)\hat a_t \leq a_t \leq (1+\eps)\hat a_t\]
  holds for all $0\leq t\leq \ell$, where $\ell$ is as in the statement of the theorem. Also, by Lemma~\ref{lemma:basic1}, we may assume that
  \[ |E_s|\in (1\pm\delta_0)(1-\tau)s\quad \text{ and }\quad |I_s|\in(1\pm\delta_0) \tau s\]
  holds for all $s\geq \start$, for some $\delta_0=\delta_0(n)=o(1)$.
  Using the definition of the sequence $(\hat a_{t})_{t\geq 0}$ and since, by definition of $\ell$, we have
  $\hat a_{\ell+1}>\delta n$,
  we can easily check that if $n$ is large enough, then we have $a_\ell\geq \delta^{1/k}/p$.
  We will prove the three statements of Lemma~\ref{lemma:percolation} separately.

  First consider (i), that is, assume that $\tau<1/(1+\gamma)$.
  In a first step we show by a case distinction that $a_{\ell+1}=\Theta(n)$.
  Let $s:=a_\ell$ and
  let $C\in \mathbb{N}$ be a large enough constant (that we define below). Assume first that $sp \geq C$. Let $\xi=\xi(\tau,\gamma)>0$ be so small that
  $(1-\xi)^2(1-\tau) \ge (1+\xi)^2\tau\gamma$; such a choice is possible since $\tau<1/(1+\gamma)$.
  Then the assumption that
  \[ |E_s|\in (1\pm\delta_0)(1-\tau)s\quad \text{ and }\quad |I_s|\in(1\pm\delta_0) \tau s\]
  implies in particular that $(1-\xi)|E_s|p \geq (1+\xi)|I_s|\gamma p$.
  By the Chernoff bounds (Lemma~\ref{lemma:Chernoff}), we have
  \[ \Pr[N^-_s(v)> (1+ \xi/2)|I_s|\gamma p] \leq e^{-\frac1{12}\xi^2|I_s|\gamma p} \leq e^{-\frac1{12}\xi^2(1-\delta_0)\tau \gamma sp} \]
  and 
  \[ \Pr[N^+_s(v)< (1- \xi)|E_s|p] \leq e^{-\frac13\xi^2|E_s|p} \leq e^{-\frac13\xi^2(1-\delta_0)(1-\tau) sp}\text. \]
  Since $sp\geq C$, by choosing $C$ large enough, we can assume that
  both these probabilities are at most $1/3$. Then with probability at least
  $1/3$, we have
  \[ N^+_s(v) \geq (1-\xi)|E_s|p
  \geq (1+\xi)|I_s|\gamma p
  \geq k +(1+\xi/2)|I_s|\gamma p
  \geq k+N^-_{s}(v)\text,
  \]
  where we used that for large enough $C$, we have $\xi|I_s|\gamma p/2\geq  k$. For the second case assume now that
  $\delta^{1/k}\leq sp < C$, which implies in particular that $p<C/s=o(1)$.
  In this case, the probability that $N^-_s(v) = 0$ is at least
  \[(1-\gamma p)^{|I_s|} \geq (1-\gamma p)^{s}\geq (1-\gamma p)^{C/p}\geq 2^{-2C\gamma },\]
  and the probability that $N^+_s(v)\geq k$ is at least
  \[ \binom{|E_s|}{k}p^k (1-p)^k\geq \frac{|E_s|^k}{2k^k}p^k \geq \frac{p^ks^k(1-\tau)^k}{4k^k}\geq \frac{\delta(1-\tau)^k}{4k^k}\text.\]
  So both probabilities are bounded from below by positive constants. Since $N^+_s(v)$ and
  $N^-_s(v)$ are conditionally independent, we see that with positive probability, we have
  $N^+_s(v)\geq k+N^-_s(v)$. Summarizing, we proved in both cases that for every vertex $v$, $N^+_s(v)\geq k+N^-_s(v)$ occurs independently with some nonzero constant probability.
  Another application of Chernoff thus implies that after round $\ell+1$, a.a.s.~a linear fraction of all
  vertices is active. 
  Since $p = \omega(n^{-1})$, we then have $\Pr[N^+_{a_{\ell+1}}(v) \geq k+N^-_{a_{\ell+1}}(v)] = 1-o(1)$
  for all vertices $v\in [n]$. This implies that in round $\ell+2$, there are $n-o(n)$ active vertices.
  If we assume additionally that $p = \omega(\log n/n)$, then using the
  Chernoff bounds and the union bound, we actually obtain that a.a.s.\ we have
  $N^{+}_{a_{\ell+1}}(v)\geq k+N^{-}_{a_{\ell+1}}(v)$
  for all $v\in [n]$, which proves that all vertices are active in round $\ell+2$, showing $(i)$.

  To show (ii), assume $\tau>1/(1+\gamma)$ and that $\hat{a}_{\ell} \geq C(\log n)/p$ holds for some large constant $C=C(\tau,\gamma)>0$ (chosen below), so that
  $s:=a_\ell\geq (1-\eps)C(\log n)/p$.
  To prove that the process stops with $s$ active vertices, it is enough to show that every vertex
  $v\in [n]$ is such that $N^-_s(v)\geq N^+_s(v)$. Fix any vertex
  $v\in [n]$ and choose a constant $\xi=\xi(\tau,\gamma)>0$ so small that
  \[ (1+\xi)|E_s|p \leq (1+\xi)(1+\delta_0)(1-\tau)sp \leq (1-\xi)(1-\delta_0)\tau\gamma s p\leq (1-\xi)|I_s|\gamma p\text;\]
  such a choice is possible since $\tau>1/(1+\gamma)$. If $C$ is sufficiently large, then by the Chernoff bounds
  (Lemma~\ref{lemma:Chernoff}) we get
  \[\Pr[N^-_s(v)< (1-\xi)|I_s|\gamma p]\leq e^{-\xi^2(1-\delta_0)\tau \gamma sp/3}\ll n^{-1}\text,\]
  and
  \[\Pr[N^+_s(v)> (1+\xi)|E_s|p] \leq e^{-\xi^2(1-\delta_0)\tau sp/3}\ll n^{-1}\text.\]
  So by the union bound, a.a.s.~every vertex $v\in [n]$ satisfies $N^-_s(v)\geq N^+_s(v)$ and the process will stop.

  Finally, for (iii), let $\tau >1/(1+\gamma)$ and suppose that we are given some $\alpha >0$.
  Let $C'=C'(\alpha)$ be large enough and assume that
  $$
  (1-\eps)C'/p \;\le\; (1-\eps)\hat{a}_{\ell}\;\le\;  a_\ell\;\leq\; (1+\eps)\hat a_{\ell}
  \;\leq\; \alpha n.
  $$
  If $(1+\eps)\hat a_\ell\geq \alpha n/2$, then by (ii), the process will stop with
  $a^*=a_\ell$ active vertices, so assume from now on that $a_\ell\leq (1+\eps)\hat a_\ell<\alpha n/2$.

  First, if there is no $t\geq \ell$ such that
  $a_t \geq \alpha n/2$, then we are done. Otherwise, let $t_0$ be the smallest $t\geq \ell$ with
  this property. The same arguments as in (ii) show that the process stops with $a^* = a_{t_0}$.
  Thus, it suffices to show that $a_{t_0}\leq \alpha n$.
  To prove this, it is enough to show that with probability tending to one, 
  we have $a_{t+1}\le a_{t}+ \alpha n/2$ for all $t\geq \ell$. To see this recall that
  $a_t \geq a_{\ell}\geq (1-\eps)C'/p$. Thus, if we choose $C'$ large enough, then we have
  $\Pr[N^+_s(v) \geq N^-_s(v)]\leq \alpha /4$ for every vertex $v\in [n]$ and for every $s\geq a_t$, using
  $\tau>1/(1+\gamma)$.
  Then, by the Chernoff bound, the probability that $a_{t+1}-a_t\geq \alpha n/2$ is
  $o(n^{-1})$. Since there can be at most $n$ rounds in total until the process stops (there are only $n$ vertices), the union bound easily shows
  that a.a.s., $a_{t+1}\le a_t+ \alpha n/2$ holds for all $t\geq \ell$, completing the proof.\qedhere

\end{proof}

\begin{proof}[Proof of Theorem~\ref{thm:synchron}]
  Observe that Corollary~\ref{cor:babysteps}, Lemma~\ref{lemma:percolation} allows us to restrict ourselves to the case $\tau > 1/(1+\gamma)$.
  Given any real number $\hat a_0$, we can define a sequence $(\hat a_t)_{t\geq 0}$ by \eqref{eq:defahat}, as
  in the statement of Theorem~\ref{thm:concentration}.
  Our first goal is to show that this sequence is sufficiently robust against rounding down the starting value $\hat a_0$.

  For this, fix any $C_2>C_1>0$, and assume that $C_1\threshold \leq \hat a_0\leq C_2\threshold$ is any real number. Denote by $\ell$ the largest positive integer
  $t$ for which $\hat{a}_t \le  n$. From Lemma~\ref{lemma:time-to-1/p} we
  know that \[\ell = \log_k\log_{(\hat a_0 /\threshold)}(pn) +O(1)=\log_k\log(pn) +O(1)\text.\]
  Let $(\hat b_t)_{t\geq 0}$ denote the sequence defined by the same recursion as
  $\hat a_t$, but with an initial value of $\lfloor\hat a_0\rfloor$, i.e., $\hat b_0 = \lfloor \hat a_0 \rfloor$ and
  $\hat b_{t+1} = \hat b_0+ (1-\tau)^k np^k \hat b_t^k/k!$.
  We will show by induction that for all $t\geq 0$, we have
  $\hat b_t / \hat a_t \geq (1-1/\hat a_0)^{k^t}$. For $t=0$ this immediately follows from $\hat b_0 \geq \hat a_0-1$.
  For the inductive step assume $\hat b_{t-1}/\hat a_{t-1}\geq (1-1/\hat a_0)^{k^{t-1}}$.
  Using Equation~\eqref{eq:growth} on page~\pageref{eq:growth}, we have
  \[\frac{\hat{b}_t}{\hat{a}_t} = \frac{\hat b_0+(\hat{b}_{t-1}/\thebox)^k\thebox/k}{\hat{a}_0+(\hat{a}_{t-1}/\thebox)^k\thebox/k}
  \geq \frac{\left(1-\frac{1}{\hat a_0}\right)\hat a_0+\left(1-\frac{1}{\hat a_0}\right)^{k^t}(\hat{a}_{t-1}/\thebox)^k\thebox/k}{\hat{a}_0+(\hat{a}_{t-1}/\thebox)^k\thebox/k}\geq \left(1-\frac{1}{\hat a_0}\right)^{k^t}\text,\]
  as claimed.
  Thus the error  in $\hat a_{\ell}$ caused by
  rounding $\hat a_0$ down to the next integer satisfies
  \[ 1\geq \frac{\hat b_{\ell}}{\hat a_{\ell}}
  \geq\left(1-\frac{1}{\hat a_0}\right)^{k^{\ell}}\geq
  \left(1-\frac{1}{C_1(\log{n})^{2+\eps}}\right)^{\Theta(\log(pn))}\to 1\text,\]
  by the assumption that $\hat a_0 \geq C_1\threshold\geq C_1(\log n)^{2+\eps}$.
  This means that for the asymptotic size of $\hat a_{\ell}$, it does not matter
  whether $\hat a_0$ is rounded down to the next smallest integer or not.

  To complete the proof of Theorem~\ref{thm:synchron}, we will show that for every
  constant $C_1$, there exists a constant $C_2$ such that for every function $\log n/p\ll f(n)\ll n$,
  there exists a function $C_1\leq c(n)\leq C_2$ such that a.a.s., the process with $\start = \lfloor c(n)\Lambda\rfloor$ 
  stops with $(1+o(1))f(n)$ active vertices. Observe that it suffices to consider constants  
  $C_1$ that are sufficiently large so that the inequalities below hold.

  Consider the process with $\start = C_1\Lambda$. Recall that we assume that $\threshold\geq (\log n)^{2+\eps}$ holds for some constant $\eps>0$. Since we may assume that $C_1\geq 1+\eps$, Theorem~\ref{thm:concentration}
  implies that there exists some $\delta>0$ such that a.a.s.,
  \[ (1-\eps)\hat a_t \leq a_t\leq (1+\eps)\hat a_t \]
  holds for all $0\leq t \leq \ell$, where 
  $\ell$ is the largest integer such that $\hat a_{\ell}\leq \delta n$.
  Define $\ell_0$ to be the largest integer such that
  $\hat a_{\ell_0}\leq f(n)/(1+\eps)$, and note that, since $f(n)\ll n$, we have $\ell_0\leq \ell$ for all large enough $n$.
  Thus we have $a_{\ell_0}\leq (1+\eps)\hat a_{\ell_0}\leq f(n)$ a.a.s..

  Observe also that for large enough $n$, we have
  $f(n) \leq \hat a_{\ell_0} np$, which is obvious if $\hat a_{\ell_0} \geq 1/p$ and
  otherwise follows from
  \[ f(n)/(1+\eps)\leq \hat a_{\ell_0+1}=\hat a_0+(1-\tau)^k\hat a_{\ell_0}^k\frac{np^k}{k!}\leq \hat a_0 + np\hat a_{\ell_0}/k!
  \leq np\hat a_{\ell_0}/(1+\eps). \]

  We will show that if one multiplies $\hat a_0$ with a large enough constant factor $c_0$, then $\hat a_{\ell_0}$
  increases by a factor of $\omega(pn)$. This will imply, by the intermediate value theorem, that there exists
  some $c=c(n)\in [C_1,c_0C_1]$ such that a starting value $\hat a_0 = c\thebox$ results in $\hat a_{\ell_0} = f(n)$. Then, by
  the argument above, and by Lemma~\ref{lemma:percolation} (ii) (using $f(n)\gg (\log n)/p$), the process with $\start = \lfloor c\thebox\rfloor$
  will stop after $\ell_0$ rounds with $(1+o(1))f(n)$ active vertices. Since $C_1$ is an arbitrary constant and since $\thebox=\Theta(\threshold)$, this will complete the proof of the theorem.

  So consider a sequence $(\hat b_t)_{t\geq 0}$ defined by $\hat b_0 = c_0C_1\thebox$ and by the same recursion \eqref{eq:defahat}, with $\hat a$ replaced by $\hat b$. Our goal is to show that $\hat b_{\ell_0}/\hat a_{\ell_0} = \omega(pn)$.
  Write $\hat b_t = c_t \hat a_t$  and  $\hat{a}_t = x_t\thebox$. Using \eqref{eq:growth} and the fact that $x_t$ is monotonically increasing, we see that for all $t\geq 0$, we have
  \[
  c_{t+1} = \frac{\hat{b}_{t+1}}{\hat{a}_{t+1}} =
  \frac{c_0C_1+(c_tx_t)^k/k}{C_1+x_t^k/k}
  \geq c_{t}^k\left(1-\frac{C_1}{C_1 +x_t^k/k}\right)\geq c_t^k\left(1-\frac{C_1}{C_1+C_1^k/k}\right).
  \]
  In particular, if $C_1$ and $c_0$ are large enough, then we have $c_1\geq 2c_0$ and $c_t\geq c_{t-1}^k/2$ for all $t>0$.
  By induction it follows that for all $t>0$, we have
  \[ c_t \geq \frac{c_{1}^{k^{t-1}}}{2^{(k^{t-1}-1)/(k-1)}} \geq c_0^{k^{t-1}}.\]
  Since $f(n)\geq 1/p$, Lemma~\ref{lemma:time-to-1/p} tells us that $\ell_0\geq \log_k \log (pn)-O(1)$, where the constant in the $O(1)$ term does not depend on $c_0$. Therefore,
  if $c_0$ is large enough, we get
  \[ \frac{\hat b_{\ell_0}}{\hat a_{\ell_0}}= c_\ell \geq c_0^{\Omega(\log {pn})}=\omega(pn), \]
  completing the proof.
\end{proof}

\section{Asynchronous Bootstrap Percolation}\label{sec:async}

In the second part of the paper we consider the bootstrap percolation process with an additional temporal component. More precisely, we assume that all edges have independent delays distributed according to $\Exp(1)$.
Recall that these transmission delays correspond to the random variables $\Phi_{iv}$ in the probability space introduced in Section~\ref{sec:probspace}. 

The main difference of this model to the synchronous case studied in the previous section is that the activation no longer takes place in rounds, but that vertices turn active at individual times. 
Recall that we write $t_s$ for the time at which the $s$-th vertex turns active. Note that
we may assume without loss of generality that no two vertices become active at the same time
(except for the vertices in the starting set).


\subsection{Initial phases}

The goal of this subsection is to describe the behavior of the process in the range where few vertices
are active. In this range, inhibition does not play an important role. 

\begin{lemma}\label{lemma:asyncexplosion}
  For every $\eps>0$, there exists a constant $T=T(\eps,k)>0$ such that
  the asynchronous process with $\start\geq (1+\eps)\threshold$ and $n^{-1}\ll p \ll n^{-1/k}$ satisfies the following. For every constant $C>0$, a.a.s.,
  \begin{enumerate}[(i)]
    \item $t_{C/p}\leq T$,
    \item $t_{C/p}-t_{1000\Lambda}\leq 1+o(1)$, and
    \item $t_{C/p}-t_{s}=o(1)$ for all $s=\omega(\threshold)$.
  \end{enumerate}
\end{lemma}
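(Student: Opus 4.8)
The plan is to combine the general lower-bound machinery from Section~\ref{ssec:basic} (in particular Corollary~\ref{cor:growth_lower} and Lemma~\ref{lemma:lower}) with a direct analysis of how fast an active set of size $s$ doubles in the asynchronous model. The first key observation is that, since $n^{-1}\ll p\ll n^{-1/k}$ forces $\threshold=\omega(1)$, Corollary~\ref{cor:growth_lower} (applied with $f(n)$ a large enough constant and then again with $f(n)=C/(p\start)$ if this is $\omega(1)$) already gives that a.a.s.\ at least $\min\{C/p,\delta/p\}$ vertices are active by some constant time, which essentially is part~(i) — one just has to note that $\min\{\delta/p,\delta/(\gamma p)\}=\Omega(1/p)$ and iterate the exponential-growth step a constant number of additional times to get from $\delta/p$ up to $C/p$ (the constant $T$ absorbing everything). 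So (i) is the ``easy'' consequence; the real content is the sharper timing in (ii) and (iii), where we must show that the explosion phase from $\Theta(\threshold)$ (or from $\omega(\threshold)$) up to $\Theta(1/p)$ takes only $1+o(1)$ (resp.\ $o(1)$) time, much faster than the $\log_k\log(pn)$ rounds of the synchronous model.

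The mechanism I would use for (ii) and (iii) is the following: if there are $s=x\thebox$ active vertices at time $t$, then by Lemma~\ref{lemma:lower} (with $\mathcal E$ the high-probability event from Lemma~\ref{lemma:basic1}, which we condition on throughout), for any $r>0$ we have $L_s(r)\ge (1-\eta)\Pr[\Phi\le r]^k x^k\thebox/k$ except with probability $e^{-c\eta^2\Pr[\Phi\le r]^k x^k\thebox}$, and hence at time $t+r$ there are at least $\min\{\start+L_s(r),10s\}$ active vertices. The point is that for the asynchronous model $\Phi\sim\Exp(1)$, so $\Pr[\Phi\le r]=1-e^{-r}\ge r/2$ for small $r$; thus choosing $r=r_s$ \emph{shrinking} with $s$ we still multiply the active set by a factor $\ge \tfrac12 (1-\eta)(r_s/2)^k x^{k-1}$, which for $x=s/\thebox\to\infty$ is huge even when $r_s$ is tiny. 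Concretely, once $s\ge s_0:=1000\thebox$ one can take, say, $r_s$ of order $x^{-(k-1)/(2k)}$: then each such step multiplies $s$ by roughly $x^{(k-1)/2}=\omega(1)$ while costing time $r_s$, and the geometric series $\sum r_s$ over the sequence $s_0, s_1,\dots$ with $s_{j+1}\approx s_j^{1+\Omega(1)}$ converges — in fact telescopes to $o(1)$ once $s_0$ is large, giving (iii), and to $\le 1+o(1)$ starting from $s_0=1000\thebox$, giving (ii). The error probabilities $e^{-c\eta^2\Pr[\Phi\le r_s]^k s}$ sum to $o(1)$ because at the $j$-th step the exponent is $\Omega(r_{s_j}^k s_j)=\Omega(s_j^{1-(k-1)/2})$; here one must be slightly careful to choose the exponent in $r_s$ so that $r_s^k s_j\to\infty$, which works as long as $x$ is polynomially large, i.e.\ exactly in the regime $s=\omega(\threshold)$, and this is why the hypothesis $s=\omega(\threshold)$ in (iii) (and the starting point $1000\thebox$ in (ii)) cannot be weakened to $s=\Theta(\threshold)$.

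To assemble the argument cleanly I would: (a) fix the event $\mathcal E$ of Lemma~\ref{lemma:basic1} with $\delta_0$ a small constant, and work deterministically on $\mathcal E$ together with the (a.a.s.) event that all the $L_{s_j}(r_{s_j})$ lower bounds hold; (b) define the doubling sequence $s_0<s_1<\dots<s_M$ stopping the first time $s_M\ge C/p$, check $M=O(\log\log(1/(p\thebox)))$ so there are not too many error terms, and that all $s_j\le \min\{\delta/p,\delta/(\gamma p)\}$ so Lemma~\ref{lemma:lower} applies (using $1/p\ll n^{1-1/k}$ and $\gamma$ constant); (c) sum the delays to get $\sum_j r_{s_j}\le 1+o(1)$ for part~(ii) and $\le o(1)$ for the tail starting at any $s=\omega(\thebox)$ for part~(iii); (d) deduce (i) by prepending Corollary~\ref{cor:growth_lower}'s constant-time growth from $\start$ up to $1000\thebox$ (or directly up to $C/p$), so that $t_{C/p}=t_{1000\thebox}+(1+o(1))\le T$. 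The main obstacle I anticipate is purely bookkeeping: choosing the exponent of $x$ in $r_s$ so that simultaneously (i) $r_s^k s\to\infty$ fast enough for the union bound over the $O(\log\log(1/p))$ steps, (ii) the growth factor per step stays $\omega(1)$, and (iii) $\sum r_s$ is $o(1)$ (or $\le 1+o(1)$ from $1000\thebox$) — all three are compatible precisely because in the asynchronous model a small delay $r$ still yields $\Pr[\Phi\le r]=\Theta(r)$ rather than $0$, but verifying the inequalities requires some care near the boundary $s=\Theta(\threshold)$, which is exactly the borderline the lemma avoids.
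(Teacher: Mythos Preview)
Your approach is essentially the paper's: apply Lemma~\ref{lemma:lower} over a sequence of shrinking time windows, exploiting $\Pr[\Phi\le r]=\Theta(r)$ for exponential delays so that the explosion from $\Theta(\Lambda)$ to $\Theta(1/p)$ costs total time $\le 1$. The paper implements this with the fixed geometric schedule $r_j=2^{-j}$, target sizes $10^{j+3}\Lambda$, and error exponents $c\,2^{j}\Lambda$, which is cleaner than your adaptive $r_s\asymp x^{-(k-1)/(2k)}$ and sidesteps two bookkeeping slips in your sketch.

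First, you overlook the cap at $10s$ built into $L_s(r)$: regardless of how large $(1-\eta)\Pr[\Phi\le r]^k x^k\Lambda/k$ is, one step can multiply the active set by at most $10$, not by $x^{(k-1)/2}$. Hence the number of steps is $\Theta(\log(1/(p\Lambda)))$, not $O(\log\log)$, and your adaptive delays must still form a convergent geometric series (which they can, once you cap the growth). Relatedly, your stated error exponent $\Omega(s_j^{1-(k-1)/2})$ is wrong --- for $k\ge 3$ it is nonpositive; the correct exponent from Lemma~\ref{lemma:lower} is $\Omega(\Pr[\Phi\le r_j]^k x_j^k\Lambda)$, which with $r_j\asymp x_j^{-(k-1)/(2k)}$ gives $\Omega(x_j^{(k+1)/2}\Lambda)\to\infty$.

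Second, the passage from $\delta/p$ to $C/p$ cannot be done by ``iterating the exponential-growth step'', since both Lemma~\ref{lemma:lower} and Corollary~\ref{cor:growth_lower} require $s\le \min\{\delta/p,\delta/(\gamma p)\}$. The paper closes this with a separate direct argument: for any $\rho>0$, a constant fraction of vertices $v$ satisfy $N^+_{\delta/p}(v)=k$, $N^-_{C/p}(v)=0$, and all $k$ excitatory signals arrive within time $\rho$, so a.a.s.\ at least $cn\ge C/p$ vertices are active by time $t_{\delta/p}+\rho$. This is what gives the $o(1)$ in (ii) and (iii) beyond $\delta/p$.
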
 
\begin{proof}
  Let $\delta_0\in (0,1/2)$ be a sufficiently small constant.
  By Lemma~\ref{lemma:basic1}, and since $p\ll n^{-1/k}$ implies $\start\ge \threshold = \omega(1)$, we can condition the process on the event $\mathcal E$ that
  \[ |E_s|\in (1\pm\delta_0)(1-\tau)s \]
  holds for all $s\geq \start$. Moreover, by Lemma~\ref{lemma:lower} (with $\eta=1/2$), we know that for every $\start \leq
  s=x\Lambda\leq 1/(20kp)$ and $r>0$, we have
  \begin{equation}\label{eq:lemlower}\Pr[L_s(r) <\Pr[\Phi\leq r]^kx^{k}\thebox/(2k)\mid \mathcal E]< e^{-c\Pr[\Phi\leq r]^kx^k\thebox}\text,
  \end{equation}
  for some positive constant $c=c(\tau,k)$ that is independent of $r$ and $x$.

  From Corollary~\ref{cor:babysteps}, we know that a.a.s.\ the process reaches at least $s_0= \min{\{1000\thebox,\delta/p\}}$ active vertices after some constant time $T_0=T_0(\eps,k)$, for an appropriate $\delta=\delta(\eps,\gamma,k)>0$. 
  Starting from $T_0$, we examine successive time intervals of lengths $1/2, 1/4,1/8 \ldots$, respectively, and compute the number of active excitatory vertices after each interval. 
  So, for every $i\geq 1$, 
  define $T_i = T_0+\sum_{j=1}^{i}2^{-j}$ and let $s_i$ be the number of vertices active at time $T_i$.
  We claim that a.a.s.~we have
  \begin{equation}\label{eq:si}s_i\geq \min{\{10^{i+3}\thebox,\delta/p\}}
  \end{equation} for every $i\geq 0$. Write $\mathcal S_i$ for the event that \eqref{eq:si} holds for $i$.
  Since the lengths of our time intervals sum up to $1$, the occurrence of $\bigcap_{i\geq 0}
  \mathcal S_i$ implies that at time $T_0+1$ there are at least $\delta/p$ active vertices.
  We will show  by induction that 
  \begin{equation}\label{eq:progress}
    \Pr[\mathcal{S}_i\mid\mathcal E]\ge 1-\sum_{j=1}^i e^{-c2^j\thebox}
  \end{equation}
  holds for all $i\geq 0$.
  The case $i=0$ is evident by the choice of $s_0$, so let us assume that \eqref{eq:progress}
  holds for some $i\ge 0$. Let $\Delta_i:= T_{i+1}-T_i=2^{-(i+1)}$.

  Write $\mathcal A$ for the event that $s_i\geq \delta/p$.
  Since $s_i\geq \delta/p$ implies $s_{i+1}\geq \delta/p$, we have
  \[\Pr[\mathcal S_{i+1}\mid \mathcal S_i\cap \mathcal E\cap \mathcal A] = 1.\]

  On the other hand, if $\mathcal A$ does not occur, then $\mathcal S_i$ implies that
  $s_i\geq 10^{i+3}\thebox$, and in this case, we have
  $s_{i+1}\geq \min{\{\start+L_{10^{i+3}\thebox}(\Delta_i),10^{i+4}\thebox\}}$, by the definition of
  $L_{10^{i+3}\thebox}(\Delta_i)$.
  By \eqref{eq:lemlower}, we have
  \[ \Pr[L_{10^{i+3}\thebox}(\Delta_i) < \Pr[\Phi\leq \Delta_i]^k10^{k(i+3)}\thebox/(2k)\mid \mathcal E] <e^{-c\Pr[\Phi\leq  \Delta_i]^k10^{k(i+3)}\thebox}\]
  for a suitable constant $c>0$.  Using the fact that $\Phi\sim\Exp(1)$, that $k\geq 2$, and that $e^{-x}\le 1-x/2$ for $0<x\le 1/2$ we get
  \[ \Pr[\Phi\leq  \Delta_i]^k10^{k(i+3)}
  = \left(1-e^{-2^{-(i+1)}}\right)^k10^{k(i+3)}\geq 2^{-k(i+2)}\cdot 10^{k(i+3)}\ge 2k10^{i+4} \]
  for all $i\geq 0$.
  Then we get
  \[ \Pr[\mathcal S_{i+1} \mid \mathcal S_i\cap \mathcal E\cap \overline{\mathcal A}]\geq 1-e^{-c2k10^{i+4}\thebox}\geq 1-e^{-c2^{i+1}\thebox}.\]
  Therefore,
  \[\Pr[\mathcal S_{i+1}\mid \mathcal E] \geq \Pr[\mathcal S_i\mid \mathcal E]
  \Big(\Pr[\mathcal A]+\Pr[\overline{\mathcal A}] (1-e^{-c2^{i+1}\thebox})\Big)\geq 1-\sum_{j=1}^{i+1}e^{-c2^{j}\thebox}\text,\]
  using the induction hypothesis. This completes the proof of \eqref{eq:progress}.

  Finally, since
  $\sum_{j=1}^{\infty}e^{-c2^j\Lambda} = o(1)$, it follows by the union bound that
  $\bigcap_{i\geq 0}\mathcal S_i$ occurs with probability $1-o(1)$, i.e., that a.a.s.~at least $\delta/p$ vertices
  are active at time $T_0+1$. Since $T_0$ is the first time at which $\min\{1000\Lambda,\delta/p\}$ vertices
  are active, this also shows that the time to go
  from $1000\thebox$ to $\delta/p$ active vertices is at most $1$. Additionally, the same proof shows that a.a.s.~for every $i\geq 0$, the time from $s_i=10^{i+2}\Lambda$ to $\delta/p$ active vertices is at most $T_0+1-T_i$, which implies that
  a.a.s., the time to go from $\omega(\threshold)$ to $\delta/p$ active vertices is $o(1)$.

  Let $T'$ denote the earliest time at which $\delta/p$ vertices are active. To complete the proof of (i--iii), it suffices to show that for all constants $\rho>0$ and $C>0$, a.a.s.\ at least $C/p$ vertices
  are active at time $T'+\rho$. Note that for a fixed vertex $v\in[n]$, the events
  $N^-_{C/p} = 0$ and $N^+_{\delta/p}=k$ and $\max_{i\in E_{\delta/p}}\{X_{iv}\Phi_{iv}\leq \rho\}$
  occur simultaneously with some positive constant probability.
  Thus, by the Chernoff bounds (Lemma~\ref{lemma:Chernoff}), we get that a.a.s.~a constant fraction of all vertices, say $c n$ vertices for some $c=c(k,\gamma,\rho,C)$ satisfies these three conditions. Hence, by time $T'+\rho$, at least $\min\{cn,C/p\}$ vertices are active. This proves the claim since $C/p\leq cn$ for large enough $n$.
\end{proof}


\subsection{End phase -- proof of Theorem~\ref{thm:async-percolation}}

Write $a(t)$ for the number of vertices that are active at time $t$ and
write
\[S^+(v,t) := |\{0\leq i \leq a(t) \mid X^+_{iv}=1\text{ and }\Psi_i=+1\text{ and }t_i+\Phi_{iv}\leq t\}|\]
and
\[S^-(v,t) := |\{0\leq i \leq a(t) \mid X^-_{iv}=1\text{ and }\Psi_i=-1\text{ and }t_i+\Phi^-_{iv}\leq t\}|\]
for the number of excitatory/inhibitory signals that have reached vertex $v$ at time $t$. For brevity, we will also write $S(v,t):= S^+(v,t)+S^-(v,t)$.

Observe that a vertex is part of the final active set exactly if there is some time $t\geq 0$ such that
$S^+(v,t)-S^-(v,t)\geq k$. Thus, our main goal will be to describe the evolution of
the random variables $S^+(v,t)-S^-(v,t)$, for different vertices $v$.

We start by proving some properties that are satisfied by most vertices during the end phase of the process.

\begin{lemma}\label{lemma:endgame}
  For every $\eps>0$, the asynchronous process with $\start\geq (1+\eps)\threshold$
  and $n^{-1}\ll p\ll n^{-1/k}$ satisfies the following. For all constants $\xi\in (0,1/3)$
  and $C_0>0$, and for every sufficiently large constant $C>0$, if $n\in\mathbb N$ is large enough,
  then with probability at least 
  $1-\xi$, all but at most $\xi n$ vertices $v\in [n]$ satisfy:
  \begin{enumerate}[(i)]
    \item $N^+_s(v)\in (1\pm \xi)(1-\tau) sp$ \quad and \quad $N^-_s(v)\in (1\pm \xi)\tau \gamma sp$ \quad for all $s\geq C/p$,
    \item $S(v,t_{C/p}) = 0$, and
    \item $S(v,t_{1/p}+\xi)\geq  C_0$.
  \end{enumerate}
\end{lemma}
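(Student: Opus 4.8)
\medskip
\noindent\textbf{Proof plan.}
The plan is to condition on a single high-probability event $\mathcal G$ and to reduce all three claims to per-vertex estimates combined with Markov's inequality. I would take $\mathcal G$ to be the intersection of: the event $\mathcal E$ that $|E_s|\in(1\pm\delta_0)(1-\tau)s$ and $|I_s|\in(1\pm\delta_0)\tau s$ for all $s\geq\start$ (Lemma~\ref{lemma:basic1}, $\delta_0$ a small constant); and the conclusions of Lemma~\ref{lemma:asyncexplosion}, namely that $t_{C/p}\leq T$ (so in particular at least $C/p$ vertices become active), and that $t_{C/p}-t_{1/p}\leq\epsilon_n$ and $t_{C/p}-t_{g(n)\threshold}\leq\epsilon_n$, where $g(n)\to\infty$ is slow enough that $g(n)\threshold\ll1/p$ (possible since $\threshold\ll1/p$) and $\epsilon_n=o(1)$ is a fixed function; such an $\epsilon_n$ exists by Lemma~\ref{lemma:asyncexplosion}(iii) as both $1/p$ and $g(n)\threshold$ are $\omega(\threshold)$. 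Then $\Pr[\mathcal G^c]=o(1)$. If $\start\geq C/p$ then $t_{C/p}=0$ and (ii) holds trivially, so I may assume $\start<C/p$, which keeps $(C/p)\cdot pn=O(n)$. Writing $Z$ for the number of vertices failing at least one of (i)---(iii), it then suffices to show that, once $C$ is a large enough constant and $n$ is large, each of $\sum_{v}\Pr[\{v\text{ fails (i)}\}\cap\mathcal G]$, $\sum_{v}\Pr[\{v\text{ fails (ii)}\}\cap\mathcal G]$, $\sum_{v}\Pr[\{v\text{ fails (iii)}\}\cap\mathcal G]$ is at most $\tfrac16\xi^2 n$, since then $\Pr[Z>\xi n]\leq\Pr[\mathcal G^c]+\EE[Z\,\mathbf 1_{\mathcal G}]/(\xi n)\leq o(1)+\tfrac12\xi<\xi$.

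For (i), I would use a geometric mesh $s_0:=\lceil C/p\rceil$, $s_{j+1}:=\lceil(1+\xi/10)s_j\rceil$ up to $s_j\leq n$, with only $O_\xi(\log n)$ mesh points. Since $N^+_s(v)$ and $N^-_s(v)$ are non-decreasing in $s$ and consecutive mesh points are within a factor close to $1$, it is enough that $N^+_{s_j}(v)\in(1\pm\tfrac{\xi}{2})(1-\tau)s_jp$ and $N^-_{s_j}(v)\in(1\pm\tfrac{\xi}{2})\tau\gamma s_jp$ hold at \emph{every} mesh point; this forces $N^\pm_s(v)$ into the slightly wider tubes $(1\pm\xi)(1-\tau)sp$ and $(1\pm\xi)\tau\gamma sp$ for all $s\geq C/p$. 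By Remark~\ref{remark:cond}, conditionally on $|E_{s_j}|$ the variable $N^+_{s_j}(v)$ is $\Bin(|E_{s_j}|,p)$, and this conditional law is unaffected by $\mathcal E$ since $\mathcal E$ depends only on the signs; on $\mathcal E$ its mean is within $(1\pm\delta_0)$ of $(1-\tau)s_jp\geq(1-\tau)C(1+\xi/10)^j$, so Lemma~\ref{lemma:Chernoff} bounds the probability of leaving the $(1\pm\tfrac{\xi}{2})$-tube by $e^{-\Omega(\xi^2(1-\tau)C(1+\xi/10)^j)}$; likewise for $N^-_{s_j}(v)$ with $\tau\gamma$ in place of $1-\tau$. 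A union bound over mesh points and signs, summing the super-geometric series in $j$, yields $\Pr[\{v\text{ fails (i)}\}\cap\mathcal E]=O(e^{-\Omega(\xi^2\min\{1-\tau,\tau\gamma\}C)})$, which is below $\tfrac1{18}\xi^2$ for $C$ large; summing over $v$ gives the claimed bound.

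For (iii), on $\mathcal G$ we have $t_{1/p}+\xi\geq t_{C/p}+\tfrac{\xi}{2}$ for large $n$ (as $\epsilon_n<\tfrac{\xi}{2}$), so, using $t_i\leq t_{C/p}$ for $i\leq C/p$,
\[ S(v,t_{1/p}+\xi)\ \geq\ S(v,t_{C/p}+\tfrac{\xi}{2})\ \geq\ |\{\,i\leq C/p:\ x_i\to v\text{ is an edge and }\Phi_{iv}\leq\tfrac{\xi}{2}\,\}|. \]
For a fixed $v$ the right-hand side is unconditionally $\Bin(C/p,\hat p(1-e^{-\xi/2}))$ with $\hat p=(1-\tau)p+\tau\gamma p$, hence has mean $C((1-\tau)+\tau\gamma)(1-e^{-\xi/2})$, which exceeds $2C_0$ for $C$ large; Lemma~\ref{lemma:Chernoff} then makes the probability it is below $C_0$ at most $e^{-\Omega(C)}\leq\tfrac1{18}\xi^2$, and summing over $v$ finishes this part.

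For (ii), the idea is that $\#\{v:S(v,t_{C/p})\geq1\}\leq\sum_v S(v,t_{C/p})$, the number of signals delivered by time $t_{C/p}$, which equals $\sum_{i\leq C/p}|\{v:\ x_i\to v\text{ is an edge and }\Phi_{iv}\leq t_{C/p}-t_i\}|$. On $\mathcal G$ we have $t_{C/p}-t_i\leq\epsilon_n$ for every $i\geq g(n)\threshold$, so there this quantity is at most $\sum_{i\leq g(n)\threshold}d_i^{\mathrm{out}}+\sum_{g(n)\threshold<i\leq C/p}|\{v:\ x_i\to v\text{ is an edge and }\Phi_{iv}\leq\epsilon_n\}|$, with $d_i^{\mathrm{out}}$ the out-degree of the $i$-th active vertex. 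This bound is a sum over a \emph{deterministic} index set, and for each fixed $i$ the two summand types have unconditional expectation $O(pn)$ and $\hat p n(1-e^{-\epsilon_n})=O(pn\epsilon_n)$, because ``$x_i\to v$ is an edge'' is a function of $\Psi_i,X^{\pm}_{iv}$ only and $\Phi_{iv}$ is independent of everything. Hence $\EE[\mathbf 1_{\mathcal G}\sum_v S(v,t_{C/p})]\leq O(g(n)\threshold\cdot pn)+O((C/p)\cdot pn\cdot\epsilon_n)=o(n)$ --- using $g(n)\threshold\, pn=o(n)$ (since $\threshold\, pn=o(n)$ and $g$ grows slowly) and $\epsilon_n=o(1)$ --- and so $\sum_v\Pr[\{v\text{ fails (ii)}\}\cap\mathcal G]\leq o(n)\leq\tfrac16\xi^2n$ for $n$ large. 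This last estimate is the main obstacle: because the edges and delays entering the signal count are exactly the randomness that drives the process, one must not take conditional expectations carelessly --- the trick is to pass first, pathwise on $\mathcal G$, to the bound over the fixed index set $\{i\leq C/p\}$ using only the timing information of Lemma~\ref{lemma:asyncexplosion}, and only afterwards take expectations, where every term is an ordinary binomial. The remaining work is the bookkeeping that keeps each per-vertex bound below a fixed constant multiple of $\xi^2$, so that it survives the Markov step.
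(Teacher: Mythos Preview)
Your proposal is correct and shares the paper's overall architecture: fix a high-probability ``good'' event, bound each vertex's failure probability by a small constant, and apply Markov's inequality. Parts~(i) and~(iii) are essentially the same as in the paper---you use a geometric mesh where the paper uses an arithmetic one ($s=(C+i)/p$, $i\in\mathbb N_0$), and for~(iii) you bound the binomial signal count directly rather than first invoking~(i), but these are cosmetic differences.

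The genuine difference is in~(ii). The paper argues per vertex: choose $\delta=\delta(\xi)$ small enough that $N_{\delta/p}(v)=0$ with probability at least $1-\xi^2/5$; then use~(i) to say $v$ has only $O_C(1)$ active neighbours at time $t_{C/p}$; finally, since Lemma~\ref{lemma:asyncexplosion} gives $t_{C/p}-t_{\delta/p}=o(1)$, none of those constantly many neighbours has fired yet. Your argument is instead \emph{global}: you bound the total number of delivered signals $\sum_v S(v,t_{C/p})$ by splitting the senders at $g(n)\threshold$, using that the early senders are so few that their total out-degree is $o(n)$, while the late senders have only had $o(1)$ time and so have delivered an $o(1)$ fraction of their signals. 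This is a clean alternative that avoids the logical dependence on~(i) and sidesteps conditioning issues by passing to a deterministic-index upper bound before taking expectations---a point you correctly flag as the main subtlety. Both routes work; yours is arguably more self-contained for~(ii), while the paper's is slightly more elementary in that it never needs to reason about the total signal count or choose an auxiliary scale $g(n)$.
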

\begin{proof}
  Fix some vertex $v\in [n]$. We will show that for $C$ chosen sufficiently large, the vertex $v$
  satisfies (i)--(iii) with probability at least $1-\xi^2$. 
  Then it will follow from Markov's
  inequality that the probability that more than $\xi n$ vertices fail to satisfy (i)--(iii) is at most
  $\xi$.

  We first show (i).
  Recall that by Lemma~\ref{lemma:basic1} we may assume that with probability $1-\xi^2/8$ we have 
  \begin{equation}\label{eq:endgame:e1}
    \abs{E_s}\in (1\pm \xi/8)(1-\tau)s\quad\text{for all $s\ge (1+\eps)\threshold$,}
  \end{equation}
  where, as before, $E_s$ denotes the set of excitatory vertices among the first $s$ active vertices. We know that
  $N^+_s(v)$ is binomially distributed with parameters $|E_s|$ and $p$. That is, given \eqref{eq:endgame:e1} an application
  of Chernoff bounds (Lemma~\ref{lemma:Chernoff}) gives us
  $$
  \Pr[ N^+_s(v)\not\in (1\pm \xi/2)(1-\tau) sp] \le e^{-c\xi^2 (1-\tau)sp},
  $$
  for some constant $c$ that does not depend on $\xi$. 
  By the union bound we deduce that the claim holds for all $s=(C+i)/p$ with $i\in\mathbb N_0$, as 
  $\sum_{i\ge 0}  e^{-c\xi^2 (1-\tau)(C+i)} = O(e^{-c\xi^2 (1-\tau)C})$ and the term on the right hand side can be made smaller than, say, $\xi^2/8$ by choosing $C=C(\xi,\tau)$ sufficiently large.
  Now consider an $s=(C+i+r)/p$ for some $r\in(0,1)$ and let $\bar s = (C+i+1)/p$. Then
  $$
  N^+_s(v)\le N^+_{\bar s}(v) \le (1+ \xi/2)(1-\tau) \bar sp \le (1+ \xi)(1-\tau)  sp, 
  $$
  whenever $C=C(\xi,\tau)$ is sufficiently large. The lower bound for $N^+_s(v)$ follows similarly. The statement for $N^-_s(v)$ follows similarly, with $|E_s|$ replaced by $|I_s|$ and
  $p$ replaced by $\gamma p$. This then shows that
  (i) holds for $v$ with probability at least $1-\xi^2/4$.

%
%
%
  Next we show (ii). The statement is trivial if $\start \ge C/p$, so assume otherwise. Assume further for the time being that $N_{C/p}(v)\leq (1+\xi)(1-\tau+\tau\gamma)C$, i.e., that $v$ has only constantly
  many active neighbors at time $t_{C/p}$. By (i),  $v$ satisfies this condition
  with probability at least $1-\xi^2/4$.

  Choose $\delta = \delta(\xi)>0$ so that $N_{\delta/p}(v)=0$
  holds with probability at least $1-\xi^2/5$.
  Since $\delta/p=\omega(\threshold)$ (this is implied by $p=\omega(n^{-1})$),
  Lemma~\ref{lemma:asyncexplosion} tells us that $t_{C/p}-t_{\delta/p}=o(1)$ a.a.s.. 
  The probability that one of the (constantly many) active neighbors of $v$ at time $t_{C/p}$
  has already sent its signal to $v$ is thus $o(1)$.   Therefore, the probability that $v$ satisfies both (i) and (ii) is at least $1-\xi^2/2$ for sufficiently large $n$.

  Lastly, let us prove (iii). Assume that $v$ satisfies both (i) and (ii). By the previous point, this happens with probability at least
  $1-\xi^2/2$.
  In particular, assume that $N^+_{C/p}(v)\geq (1-\xi)(1-\tau)C$.
  If $C$ is large enough, then with probability at least $1-\xi^2/4$,
  at least $C_0$ excitatory neighbors of $v$ will send their signal to $v$ before time $t_{C/p}+\xi/2$,
  i.e., $S(v,t_{C/p}+\xi/2)\geq C_0$. However, by Lemma~\ref{lemma:asyncexplosion}, we have
  $t_{C/p}+\xi/2 \leq t_{1/p}+\xi$, which
  shows that $v$ satisfies (i)--(iii) with probability at least $1-\xi^2$.
  As noted above, an application of Markov's inequality completes the proof.
\end{proof}

Before we come to the technical part of the proof, we give an intuition for the result. Let us consider a typical vertex $v$ at time $t_{C/p}$. The previous lemma shows that,
although $v$ has many active neighbors at time $t_{C/p}$, none of their signals has arrived at vertex $v$ at that time.
Moreover, we can assume that throughout the process, roughly the correct fraction of the neighbors of $v$ are excitatory.
I.e., when $s$ vertices are active there are about $(1-\tau) sp$ excitatory neighbors and $\tau\gamma sp$ inhibitory ones. 
Recall that we assumed that the delays
(the variables $\Phi_{iv}$) are exponentially distributed, that is, they are \emph{memoryless},
which  means that every neighbor whose signal has not yet arrived
is \emph{equally likely} to be the next to deliver its signal. Therefore, we would expect that, as the signals come in,
the difference $S^+(v,t)-S^-(v,t)$ performs a random walk with a bias close to
$\frac{1-\tau}{1-\tau+\tau\gamma}$, in which case
the probability that
the vertex $v$ becomes active is just the probability that such a random walk ever reaches $k$. This
is the idea for the rest of the analysis. We will need the following basic facts about random walks.
\begin{lemma}\label{lemma:random-walk-app}
  Let $X_1,X_2,\dotsc,X_n$ be a sequence of independent random variables, each of which
  is equal to $1$ with probability $\beta \in [0,1]$ and $-1$ otherwise. Consider the
  \emph{biased random walk} $Z_i = X_1+X_2+\dotsb+X_i$. Then there exists for every $\eps > 0$ and $k\in\N$ a constant $C_0 = C_0(\eps,k)$ such that the following is true:
  $$\Pr[\forall n\geq C_0 : Z_n \in (2\beta-1)n \pm \eps n]\ge 1-\eps$$ and
  $$\Pr[\exists i \le C_0 \text{ s.t. } Z_i=k] \in (1\pm\eps) \cdot \min\left\{1,\frac{\beta^k}{(1-\beta)^k}\right\}.$$
\end{lemma}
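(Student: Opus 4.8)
For the first assertion I would just invoke concentration. Since $Z_n$ is a sum of $n$ independent $\{-1,+1\}$-valued random variables with $\EE[Z_n]=(2\beta-1)n$, Hoeffding's inequality gives $\Pr[\,|Z_n-(2\beta-1)n|\ge\eps n\,]\le 2e^{-\eps^2 n/2}$ (alternatively, apply Lemma~\ref{lemma:Chernoff} to the $\Bin(n,\beta)$ variable $(Z_n+n)/2$). Summing this over $n\ge C_0$ and taking $C_0=C_0(\eps)$ large enough makes the probability of $\{\exists n\ge C_0:\ Z_n\notin(2\beta-1)n\pm\eps n\}$ at most $\eps$. In the end we take for $C_0$ the maximum of this constant and the one produced below.

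For the second assertion, let $\tau_k:=\inf\{i\ge 1:\ Z_i=k\}$; since the walk has steps $\pm1$, the event $\{\exists i\le C_0:\ Z_i=k\}$ coincides with $\{\tau_k\le C_0\}$. The plan is to first evaluate the unrestricted hitting probability $\Pr[\tau_k<\infty]$ and then argue that truncating the time horizon at a large enough constant loses only a factor $1\pm\eps$. For the unrestricted probability I would run a first-passage analysis: writing $q:=\Pr[\tau_1<\infty]$ and conditioning on the first step, one gets $q=\beta+(1-\beta)q^2$, whose relevant root is $q=\min\{1,\beta/(1-\beta)\}$: this is the root strictly below $1$ when $\beta<1/2$ (since then $Z_n\to-\infty$ a.s., so $q<1$), and $q=1$ when $\beta\ge 1/2$. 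By translation invariance and the strong Markov property, $\Pr[\tau_k<\infty]=q^k=\min\{1,(\beta/(1-\beta))^k\}$; the same value also falls out of optional stopping for the martingale $((1-\beta)/\beta)^{Z_n}$. Since $\Pr[\tau_k\le C_0]$ increases to $\Pr[\tau_k<\infty]$ as $C_0\to\infty$ and never exceeds it, one can fix $C_0$ so large that $\Pr[\tau_k\le C_0]\in(1\pm\eps)\min\{1,(\beta/(1-\beta))^k\}$, which is the claim.

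The only point that needs real care is whether $C_0$ may be chosen depending on $\eps$ and $k$ alone: near the critical value $\beta=1/2$ the time to reach level $k$ (conditioned on reaching it) has large expectation, so the convergence $\Pr[\tau_k\le C_0]\to\Pr[\tau_k<\infty]$ is a priori not uniform in $\beta$. I would handle this via the classical fact that, conditioned on $\{\tau_k<\infty\}$, the walk up to time $\tau_k$ has the law of a $\pm1$ walk with up-probability $\max\{\beta,1-\beta\}\ge 1/2$ (the Doob $h$-transform by the harmonic function $j\mapsto(\beta/(1-\beta))^{k-j}$, equivalently a path-reversal identity). As the probability of reaching $k$ within $C_0$ steps is monotone non-decreasing in the up-probability, it is minimized by the symmetric walk, which by recurrence reaches $k$ within $C_0=C_0(\eps,k)$ steps with probability at least $1-\eps$; multiplying by $\Pr[\tau_k<\infty]$ gives the lower bound uniformly in $\beta$, while the matching upper bound $\Pr[\tau_k\le C_0]\le\Pr[\tau_k<\infty]$ is immediate. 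I expect this bookkeeping around the critical case to be the main (and only mild) obstacle; since in our applications $\beta=(1-\tau)/(1-\tau+\tau\gamma)$ is a fixed constant, a $C_0$ depending additionally on $\beta$ would in any case be enough, and the rest is classical gambler's ruin together with a standard tail bound.
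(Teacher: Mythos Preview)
Your argument is correct. The paper's own proof is a two-line sketch: it observes that the first assertion follows from Chernoff bounds (as you do), then notes that for $\beta>1/2$ the first assertion already \emph{implies} the second (since the walk, being within $\eps n$ of the positive drift line $(2\beta-1)n$ for all $n\ge C_0$, must have crossed level $k$ by step $C_0$ once $(2\beta-1)C_0-\eps C_0>k$), and for $\beta\le 1/2$ simply cites a textbook exercise on gambler's ruin.

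Your route is more self-contained: you compute $\Pr[\tau_k<\infty]=\min\{1,(\beta/(1-\beta))^k\}$ directly and then handle the truncation to a finite horizon via the Doob $h$-transform, showing that the conditioned walk has up-probability $\max\{\beta,1-\beta\}\ge 1/2$ and hence the symmetric walk is the uniform worst case. This extra step buys you an explicit justification of the uniformity of $C_0$ in $\beta$---a point the paper's proof leaves to the cited reference, and which (as you correctly flag) is the only genuine subtlety, given that in the applications $\beta$ is a fixed constant anyway. The paper's shortcut for $\beta>1/2$ is a slightly slicker way to avoid the hitting-time analysis in that regime, but your unified treatment is equally valid and arguably cleaner.
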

\begin{proof}
  The first fact follows immediately from Chernoff bounds and implies the second fact whenever
  $\beta > 1/2$. For $\beta\le1/2$ the second fact follows from \cite{Grimmett2001}, Problem 5.3.1.
\end{proof}

For every vertex $v\in [n]$ and every $i\in\mathbb N$, define
$X_i^{(v)}$ to be $1$ if the $i$-th signal arriving in $v$ is excitatory, and $-1$ otherwise. Here
we assume that in the asynchronous process,
no two signals arrive simultaneously, which is the case with probability $1$.
Then we can define $Z_i^{(v)}:= X_1^{(v)}+X_2^{(v)}+\dotsb+X_i^{(v)}$, and we know that
the vertex $v$ becomes active
with the arrival of the first signal that causes $Z_i^{(v)}$ to become $k$, if such a signal exists. As outlined before, we will show that $Z_i^{(v)}$ follows (essentially) a random walk with bias 
\begin{equation}
  \label{eq:beta}
  \beta := \frac{1-\tau}{1-\tau+\gamma\tau}.
\end{equation}
If $\tau \geq 1/(1+\gamma)$, then $\beta \leq 1/2$, and by Lemma~\ref{lemma:random-walk-app} we would expect that roughly $n\beta^k/(1-\beta)^k $ vertices are activated. There are two problems which complicate the analysis: the first being that the processes
$(Z_i^{(v)})_{i\in \mathbb N}$ and $(Z_i^{(u)})_{i\in \mathbb N}$ are not independent for different
vertices $u$ and $v$, and the second being that for a fixed vertex $v$, the variables $X_i^{(v)}$ and $X_j^{(v)}$ are not independent for $i\neq j$, meaning that $(Z_i^{(v)})_{i\in \mathbb N}$ is not a true random walk.
However, the following lemma tells us that at least for the first $C_0$ incoming signals these problems do not matter.

\begin{lemma}\label{lemma:walks}
  For every $0<\eps,\zeta < 1/3$, there exists some $T=T(\eps,k)>0$ independent of $\zeta$
  such that for every large enough constant $C_0>0$, the asynchronous percolation process with
  $\start \geq (1+\eps)\threshold$
  and $n^{-1}\ll p\ll n^{-1/k}$ satisfies the following: for every large enough $n\in\mathbb N$,
  with probability at least $1-\zeta$,
  \begin{itemize}
    \item if $\tau > 1/(1+\gamma)$, then $a(T) \geq \beta^kn/(1-\beta)^k - \zeta n$ and at most $\beta^kn/(1-\beta)^k + \zeta n$ vertices get activated by their first $C_0$ incoming signals.
      Moreover, all but at most $\zeta n$ vertices $v$ are such that
      $Z_{C_0}^{(v)}\leq -(1-\zeta)(1-2\beta)C_0$.
    \item if $\tau \leq 1/(1+\gamma)$, then $a(T)\geq n-\zeta n$.
  \end{itemize}
\end{lemma}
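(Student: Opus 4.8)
The plan is to reduce the whole statement to controlling, for a ``typical'' vertex, a biased random walk, and then to add up over vertices. Fix $\eps$ and $\zeta$. Choose a small constant $\xi = \xi(\eps,\zeta,k,\gamma,\tau)>0$, let $C_0 = C_0(\eps_1,k)$ be the constant supplied by Lemma~\ref{lemma:random-walk-app} for a sufficiently small $\eps_1$, and then pick $C = C(\xi,C_0)$ large. Let $T = T(\eps,k)$ be large enough that, by Lemma~\ref{lemma:asyncexplosion}, a.a.s.\ $t_{C/p}\le T$ and $t_{1/p}+1\le T$. By Lemma~\ref{lemma:endgame} (applied with these $\xi, C_0$ and $C$), with probability at least $1-\xi$ all but at most $\xi n$ vertices are \emph{good}, i.e.\ satisfy (i)--(iii) of that lemma. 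For a good vertex $v$ all of its first $C_0$ incoming signals have arrived by time $t_{1/p}+\xi\le T$, so, writing $Y_v := \mathbb{1}[\exists\, j\le C_0 : Z^{(v)}_j = k]$, one has $a(T)\ge \#\{v\text{ good}: Y_v=1\}$ and $\#\{v: Y_v=1\}\le \xi n + \#\{v\text{ good}: Y_v=1\}$. Thus it suffices to estimate $W := \#\{v\text{ good}: Y_v=1\}$, together with the number of vertices whose walk value $Z^{(v)}_{C_0}$ fails to be very negative.

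Next I would pin down the bias of the walk at a good vertex $v$. By property (ii) no signal reaches $v$ before $t_{C/p}$, so when its $j$-th signal ($j\le C_0$) arrives there are already at least $C/p$ active vertices, and then property (i) forces the active neighbours of $v$ to be excitatory in a proportion within $(1\pm\xi)\beta$ throughout, where $\beta = \tfrac{1-\tau}{1-\tau+\gamma\tau}$. Since the delays are $\Exp(1)$ and hence memoryless, at every instant the residual delays of the not-yet-delivered active neighbours of $v$ are i.i.d.\ $\Exp(1)$; consequently the probability that the next arriving signal is excitatory equals a weighted average of the instantaneous excitatory proportions $m^+(t)/(m^+(t)+m^-(t))$ over the random window up to that arrival, where $m^\pm(t)$ count the still-pending active neighbours — and each such proportion lies in $(1\pm\xi')\beta$ with $\xi' = O(\xi)+O(C_0/C)$ (the $O(C_0/C)$ term absorbing the at most $C_0$ neighbours that have already delivered). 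Hence, conditionally on the past, $\Pr[X^{(v)}_j = +1]\in (1\pm\xi')\beta$. Coupling $(Z^{(v)}_j)_{j\le C_0}$ between two honest biased walks with biases $(1\pm\xi')\beta$ and invoking Lemma~\ref{lemma:random-walk-app} then gives $\Pr[Y_v=1\mid v\text{ good}]\in (1\pm\eps_2)\,q$ with $q := \min\{1,\beta^k/(1-\beta)^k\}$, and, when $\beta<1/2$, also $\Pr[Z^{(v)}_{C_0}\le -(1-\zeta)(1-2\beta)C_0\mid v\text{ good}]\ge 1-\eps_2$, where $\eps_2\to 0$ as $\xi\to 0$ and $C_0\to\infty$.

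Finally I would run the summation. The crucial point is that, conditionally on the activation structure $\Pi$ (which vertex becomes active at which time) together with the signs $\Psi$, the families of edge- and delay-variables incident to distinct vertices are mutually independent, because the event ``$\Pi,\Psi$ is realised'' is a conjunction of constraints each of which refers only to the variables incident to a single vertex. Therefore the indicators $\mathbb{1}[v\text{ good}]\,Y_v$ are conditionally independent, so $W$ is a sum of independent Bernoulli variables and Chernoff's bound gives $W = (1\pm o(1))\,\EE[W\mid \Pi,\Psi]$ a.a.s., while $\EE[W\mid\Pi,\Psi] = \sum_v \Pr[v\text{ good}\mid \Pi,\Psi]\cdot(1\pm\eps_2)q$, which by Lemma~\ref{lemma:endgame} and Markov equals $(1\pm O(\sqrt\xi))(1\pm\eps_2)\,qn$ for all but an $O(\sqrt\xi)$-fraction of conditionings. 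For $\tau\le 1/(1+\gamma)$ we have $\beta\ge 1/2$ and $q=1$, so $a(T)\ge W\ge n-\zeta n$; for $\tau>1/(1+\gamma)$ we have $q=\beta^k/(1-\beta)^k$ and obtain both $a(T)\ge W\ge \beta^kn/(1-\beta)^k-\zeta n$ and $\#\{v: Y_v=1\}\le \xi n + W\le \beta^kn/(1-\beta)^k+\zeta n$, while a second conditional Chernoff bound applied to the independent indicators $\mathbb{1}[v\text{ good},\, Z^{(v)}_{C_0} > -(1-\zeta)(1-2\beta)C_0]$ (whose conditional mean is at most $\eps_2$) yields the ``moreover'' clause. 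The main obstacle is the dependency bookkeeping: $\Pi$ constrains the delays $\Phi_{iv}$ incident to $v$ through $v$'s own activation time, so one must argue — using that a good vertex activates only in the end phase, after $t_{C/p}$, hence influences $\Pi$ only in a lower-order way — that this corrupts neither the per-step bias estimate nor the conditional independence; one also has to justify carefully the ``weighted average of instantaneous proportions'' identity for the exponential clocks when the pool of pending neighbours keeps growing. Everything else is routine given Lemmas~\ref{lemma:basic1}, \ref{lemma:asyncexplosion}, \ref{lemma:endgame} and \ref{lemma:random-walk-app}.
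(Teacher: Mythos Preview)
Your overall strategy matches the paper's: invoke Lemma~\ref{lemma:endgame} to obtain a large set $V_0$ of ``good'' vertices, argue that for each good vertex the first $C_0$ incoming signals behave as a biased walk with bias near $\beta$, apply Lemma~\ref{lemma:random-walk-app}, and sum over vertices. Your per-step bias computation (the ``weighted average of instantaneous proportions'') is correct and equivalent to the paper's simpler version, which just conditions on the number $s$ of active vertices at the moment the signal lands and reads off the ratio $(N^+_s(v)-S^+(v,t))/(N_s(v)-S(v,t))$.

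The concentration step, however, goes astray. You condition on the full activation structure $\Pi$ together with $\Psi$, and then assert $\EE[W\mid\Pi,\Psi]=\sum_v\Pr[v\text{ good}\mid\Pi,\Psi]\cdot(1\pm\eps_2)q$. But $\Pi$ already encodes, for every vertex, whether it ever becomes active: if $\Pi$ assigns $t_v=\infty$ then $Y_v=0$ deterministically given $\Pi$, so $\Pr[Y_v=1\mid\Pi,\Psi]=0$, not $\approx q$. Consequently $\EE[W\mid\Pi,\Psi]$ can never exceed the number of vertices that $\Pi$ declares active --- which is exactly the quantity you are trying to control, so the argument is circular. Your conditional-independence observation (that, given $\Pi,\Psi$, the incoming edge/delay families factor over target vertices) is actually correct, but it does not help once the conditional mean is unknown. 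The workaround you propose (good vertices activate only after $t_{C/p}$ and hence ``influence $\Pi$ only in a lower-order way'') misidentifies the difficulty: the problem is not that $v$ perturbs $\Pi$, it is that conditioning on $\Pi$ imposes on $v$'s own walk the full event ``reaches $k$'' or ``never reaches $k$'', which is first-order, not lower-order.

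The paper avoids this entirely by never conditioning on the future. It proves the claim it labels $(\star)$: for $v\in V_0$ and each step $j\le C_0$, the conditional probability that the $j$-th signal is excitatory lies in $\beta\pm\eta$ \emph{given the past of the whole process, including all other vertices}. This is a filtration statement, and it is precisely what licenses a stepwise coupling of each $(Z^{(v)}_j)_{j\le C_0}$ with two honest i.i.d.\ walks of bias $\beta\pm\eta$, realized with fresh auxiliary uniforms so that the sandwiching walks are independent across vertices; Chernoff then applies to the two bounding sums. In short, drop the conditioning on $\Pi$ and run the forward-in-time coupling directly --- that is the missing idea, and with it your outline goes through.
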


\begin{proof}
  Assume first that $\tau >1/(1+\gamma)$ and note that this is
  equivalent to $0< \beta < 1/2$, where $\beta$ is as in \eqref{eq:beta}.
  Let $\xi=\xi(\zeta) > 0$ be sufficiently small (to be  fixed below) and
  choose $\eta=\eta(\zeta)>0$ such that $0<\beta-\eta\leq \beta + \eta \leq 1/2$ and
  \begin{equation}\label{eq:async:walk1}
    \left(\frac{\beta}{1-\beta}\right)^k - \frac{\zeta}{2} \le \left(\frac{\beta-\eta}{1-(\beta-\eta)}\right)^k-\eta \le \left(\frac{\beta+\eta}{1-(\beta+\eta)}\right)^k + \eta \le \left(\frac{\beta}{1-\beta}\right)^k + \frac{\zeta}{2}.
  \end{equation}
  Moreover, we may assume that $\eta < \zeta(1-2\beta)/3$, which implies $(2(\beta+\eta)-1)C_0+\eta C_0 \le (1-\zeta)(2\beta-1)C_0 $ (note that $2\beta-1$ is negative). We will apply  Lemma~\ref{lemma:random-walk-app} for $\beta-\eta$ and for $\beta+\eta$.
  Clearly, whenever 
  $C_0=C_0(\eta)=C_0(\zeta)$ is sufficiently large, then for both values the probability in Lemma~\ref{lemma:random-walk-app} is 
  within $\eta$ of the limit if we only consider the first $C_0$ terms. 
  We will also apply Lemma~\ref{lemma:random-walk-app} with $\beta+\eta$, and we can assume
  that $C_0$ is so large that the probability that $Z_{C_0}\in (2\beta-1)C_0 \pm \eta C_0$ is
  at least $1-\xi$. 

  Let $C=C(\xi,C_0) = C(\zeta)$ be so large that $C_0 \le \xi (1-\xi)(1-\tau) C$ and that
  Lemma~\ref{lemma:endgame}  guarantees that with probability $1-\xi$ there exists 
  a set $V_0\subseteq [n]$ of size at least $|V_0| \ge (1-\xi)n$ such that for all $v\in V_0$ we have
  \begin{enumerate}[(i)]
    \item $N^+_s(v)\in (1\pm \xi)(1-\tau) sp$ \quad and \quad $N^-_s(v)\in (1\pm \xi)\tau \gamma sp$ \quad for all $s\geq C/p$,
    \item $S(v,t_{C/p}) = 0$, and
    \item $S(v,t_{1/p}+\xi)\geq  C_0$.
  \end{enumerate}
  We will prove that
  for every vertex $v\in V_0$ and every time $t\geq 0$ such that $S(v,t)< C_0$, the first
  signal arriving in $v$ after time $t$ is excitatory with probability within $\beta\pm \eta$.
  Moreover, we will show:  $(\star)$ these bounds  hold regardless of the states of all other vertices. 

  Before proving this claim we show that this suffices to prove the first bullet point.
  Applying 
  Lemma~\ref{lemma:random-walk-app} with respect to $\beta-\eta$ and $\beta + \eta$, together with our assumptions on $C_0$  and Lemma~\ref{lemma:endgame} we observe
  that the probability that a vertex $v\in V_0$ gets active by receiving the first $C_0$ signals is within $(\beta\pm \eta)^k/(1-\beta\pm\eta)^k \pm \eta= \beta^k/(1-\beta)^k\pm \zeta/2$ by~\eqref{eq:async:walk1}. 
  By applying Chernoff bounds (which we may, because of $(\star)$), and since we may choose $\xi = \xi(\zeta)$ small enough, this then implies 
  that a.a.s.\ at least $(1- \xi)(\beta^k/(1-\beta)^k -\zeta/2 )|V_0| \geq \beta^k/(1-\beta)^kn- \zeta n$ vertices become
  active with one of the first $C_0$ incoming signals. (Note that the error  that we get from Chernoff is in the order
  $e^{-\Theta(n)}$, which is smaller than $\xi$ for all large enough $n$.)
  Similarly, a.a.s.\ at most $(1+\xi)(\beta^k/(1-\beta)^k+\zeta/2)|V_0| + (n-|V_0|)
  \leq \beta^k/(1-\beta)^k+\zeta n/2$ vertices become active by one of the first $C_0$ signals.
  In addition, Lemma~\ref{lemma:random-walk-app} (with $\beta +\eta$) and  Chernoff bounds show that for at least
  $(1-\xi)|V_0|\geq n-\zeta n$ vertices, we have $Z_{C_0}^{(v)}\leq (1-\zeta)(2(\beta+\eta)-1)C_0 +\eta C_0 \leq (1-\zeta)(2\beta-1)C_0$.
  Since $t_{1/p}+\xi < t_{1/p}+1$ can be upper bounded by a constant $T$ by Lemma~\ref{lemma:asyncexplosion}, this implies the claim of
  the first bullet point.

  So consider some $v\in V_0$ and $t\geq 0$ with $S(v,t)< C_0$.
  Assume that the first signal that arrives at $v$ after time $t$
  does so at time $t^*$.
  Let $s\in [n]$ be such that $t^*\in (t_s,t_{s+1}]$. By (ii), we know that we have
  $s+1>  C/p$, as no signals arrive before time $t_{C/p}$.
  In the following, condition the process on the value of $s$.
  By the memorylessness of the exponential distribution,
  the conditional probability that the new signal is excitatory is
  \[ \frac{N^+_s(v)-S^+(v,t)}{N_s(v)-S(v,t)}\text. \]
  By our choice of  $C$, we deduce that (i) implies that
  \[ S^+(v,t)\leq S(v,t)< C_0 \leq \xi N_{C/p}(v)< \xi N_{s+1}(v)\leq \xi (N_s(v)+1)\text.\]
  Also, again by (i), we have
  \[\frac{N^+_s(v)}{N_s(v)} = \frac{N^+_s(v)}{N^+_s(v)+N^-_s(v)}\in (1\pm 3\xi)\beta \]
  for all $s\geq C/p$.
  Therefore, if $\xi$ is small enough,
  the probability of an excitatory signal is at least
  \[ \frac{(1-3\xi)\beta N_s(v)-\xi(N_s(v)+1)}{N_s(v)}\geq \beta -\eta \]
  and at most
  \[ \frac{(1+3\xi)\beta N_s(v)}{N_s(v)-\xi (N_s(v)+1)}\leq \beta+\eta\text. \]
  Note that these bounds hold independently of the value of $s$.

  Now consider the case $\tau \geq 1/(1+\gamma)$, which is equivalent to $\beta \geq 1/2$. For every $\eta>0$,
  and every large enough $C_0$ (depending on $\eta$),
  by a similar argument as above, with probability $1-\xi$, there exist a set $V_0$ of
  $(1-\xi)n$ vertices such that for every vertex $v\in V_0$, each of the first $C_0$ signals
  arriving in $v$ is excitatory with probability at least $1/2-\eta$, and moreover, $S(v,t_{1/p}+\xi)\geq C_0$.
  Then, if $C_0$ is large enough, Lemma~\ref{lemma:random-walk-app} and the Chernoff bound show that a.a.s.\ at least
  $(1-\xi)\left(\frac{1/2 -\eta}{1/2+\eta}\right)^k|V_0|$ vertices of $V_0$
  become active with one of the first $C_0$ signals, and for sufficiently small $\xi$ and $\eta$ this number is at least $n-\zeta n$. Since for the vertices of $V_0$, we have
  $S(v,t_{1/p}+\xi)\geq C_0$, and since $t_{1/p}+\xi$ can be upper bounded by a  constant $T$ (by Lemma~\ref{lemma:asyncexplosion}),
  this shows that $a(T)\geq n-\zeta n$.
\end{proof}

It remains to prove that if $\tau\geq 1/(1+\gamma)$, then
not too many vertices become active. This is the content of the following lemma.

\begin{lemma}\label{lemma:upperbound}
  For every $\eps>0$,
  the asynchronous percolation process with $\tau\geq  1/(1+\gamma)$,
  $(1+\eps)\threshold \leq \start \ll n$,
  and $n^{-1}\ll p\ll n^{-1/k}$ a.a.s.~does not activate more than $n\beta^k/(1-\beta)^k +o(n)$
  vertices, where $\beta$ is given by~\eqref{eq:beta}.
\end{lemma}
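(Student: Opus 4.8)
The plan is to prove the first‑moment bound $\EE[a^*]\le n\beta^k/(1-\beta)^k+o(n)$, where $a^*$ denotes the number of vertices that are eventually active, and then to upgrade it to the a.a.s.\ statement using the \emph{matching} lower bound already contained in Lemma~\ref{lemma:walks}. First, if $\tau=1/(1+\gamma)$ then $\beta=1/2$ and $n\beta^k/(1-\beta)^k=n$, so the claim is trivial; hence we may assume $\tau>1/(1+\gamma)$, i.e.\ $\beta<1/2$. Write $\mu:=n\beta^k/(1-\beta)^k$ and suppose $\EE[a^*]\le\mu+o(n)$ has been shown. By Lemma~\ref{lemma:walks} a.a.s.\ $a^*\ge a(T)\ge\mu-o(n)$. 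If the lemma failed, there would be a constant $\zeta>0$ and arbitrarily large $n$ with $\Pr[a^*>\mu+\zeta n]\ge\zeta$; splitting $\EE[a^*]$ according to whether $a^*$ exceeds $\mu+\zeta n$ and bounding $a^*\ge\mu-o(n)$ on the high‑probability complement would give $\EE[a^*]\ge\mu+\zeta^2 n-o(n)$, a contradiction. So it suffices to bound $\EE[a^*]$.

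\paragraph{Conditioning and reduction to a single vertex.}
Fix a small constant $\zeta>0$, apply Lemmas~\ref{lemma:endgame} and~\ref{lemma:walks} with a small auxiliary constant $\xi=\xi(\zeta)$ and a large constant $C_0=C_0(\zeta)$, and then Lemma~\ref{lemma:endgame} with a constant $C=C(\xi,C_0)$ that is large relative to $C_0$: with probability $1-O(\zeta)$ there is a set $V_0\subseteq[n]$ with $\abs{V_0}\ge(1-\zeta)n$ all of whose vertices satisfy (i) and (ii) of Lemma~\ref{lemma:endgame}; moreover at most $\zeta n$ vertices $v$ have $Z_{C_0}^{(v)}>-cC_0$, where $c:=(1-\zeta)(1-2\beta)>0$, and at most $\mu+\zeta n$ vertices become active through one of their first $C_0$ incoming signals. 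Call this event $\mathcal G$. On $\mathcal G$ we bound $a^*$ by (the number of vertices active via their first $C_0$ signals)$\,+\,$(the number of remaining vertices that become active). The first term is at most $\mu+\zeta n$. For the second, we count the $O(\zeta n)$ vertices outside $V_0$ or with $Z_{C_0}^{(v)}>-cC_0$ trivially, and for each remaining vertex $v$ we will show
\[\Pr[\,v\text{ becomes active via a signal numbered}>C_0\mid\mathcal G\,]\le e^{-\Omega(C_0)}.\]
Summing gives $\EE[a^*\mid\mathcal G]\le\mu+O(\zeta n)+ne^{-\Omega(C_0)}$, hence $\EE[a^*]\le\mu+O(\zeta n)+ne^{-\Omega(C_0)}+n\cdot O(\zeta)$; letting first $\zeta\to0$ and then $C_0\to\infty$ yields $\EE[a^*]\le\mu+o(n)$.

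\paragraph{The walk estimate, and the main obstacle.}
Fix $v\in V_0$ with $Z_{C_0}^{(v)}\le-cC_0$ and consider the walk $(Z_i^{(v)})_{i\ge C_0}$. By Lemma~\ref{lemma:endgame}(ii) no signal reaches $v$ before time $t_{C/p}$, so every signal reaching $v$ does so while at least $C/p$ vertices are active. Conditioning on the activity level $s$ at the moment the $i$‑th signal reaches $v$ and using memorylessness of the $\Exp(1)$ delays, this signal is excitatory with conditional probability $q_i=(N^+_s(v)-S^+(v,\cdot))/(N_s(v)-S(v,\cdot))$, irrespective of the states of all other vertices. Writing $S^\pm(v,\cdot)=\tfrac12(m\pm Z_i^{(v)})$ for the number $m$ of signals received so far and plugging in $N^+_s(v)\le(1+\xi)(1-\tau)sp$ and $N^-_s(v)\ge(1-\xi)\tau\gamma sp$ from Lemma~\ref{lemma:endgame}(i), a short computation produces constants $\delta=\delta(\tau,\gamma)>0$ and $\theta=\theta(\tau,\gamma)>0$ with $q_i\le\tfrac12-\delta$ \emph{whenever} $Z_i^{(v)}\ge-\theta N_s(v)$; here one uses $\beta<\tfrac12$, which forces $N^+_s(v)-(1-4\delta)N^-_s(v)\le-\Omega(N_s(v))$. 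Since $v$ enters the end phase with $\abs{Z_{C_0}^{(v)}}\le C_0$ while $N_s(v)=\Omega(C)\gg C_0/\theta$, the walk starts well inside the region $Z\ge-\theta N_s(v)$, so with $\lambda:=(\tfrac12+\delta)/(\tfrac12-\delta)>1$ the stopped process $\lambda^{Z^{(v)}_{i\wedge\rho}}$ — where $\rho$ is the first step after $C_0$ at which the walk leaves the interval $[-\theta N_s(v),\,k-1]$ — is a nonnegative supermartingale, and optional stopping bounds the probability that the walk reaches $k$ before time $\rho$ by $\lambda^{Z_{C_0}^{(v)}-k}\le\lambda^{-cC_0}=e^{-\Omega(C_0)}$. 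The remaining scenario is that the walk first exits \emph{downward}, crossing the moving barrier $-\theta N_s(v)$, and later climbs back to $k$; such a downward exit lands the walk at a value $\le-\theta N_{C/p}(v)=-\Omega(C)$, and the same estimate may be iterated over successive excursions, the contributions forming a geometric series still of size $e^{-\Omega(C_0)}$. The \textbf{main obstacle} is exactly this last point: the increments of $(Z_i^{(v)})_i$ are neither independent nor identically biased, and when the walk is deep in negative territory $v$ has received disproportionately many inhibitory signals, so its \emph{remaining} neighbours are relatively richer in excitatory ones and an up‑step becomes more likely than $\beta$; the argument relies on the fact that this effect is harmless until the walk has drifted an $\Omega(N_s(v))$ amount below zero — a range reached only via a large deviation — and on carefully tracking the window $[-\theta N_s(v),k-1]$ while its lower end moves downward as $s$ grows.
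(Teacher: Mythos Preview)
Your overall architecture is fine: the first–moment reduction via the matching lower bound from Lemma~\ref{lemma:walks} is valid, and the decomposition into ``activated by the first $C_0$ signals'' versus ``activated later'' mirrors the paper. The genuine gap is in the walk estimate.

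Your barrier is $-\theta N_s(v)$, a quantity that depends on the global activity level $s$, and your supermartingale $\lambda^{Z_{i\wedge\rho}}$ is only a supermartingale \emph{above} this barrier. Once the walk exits downward you lose all control on the bias, and your one–line ``the same estimate may be iterated over successive excursions, the contributions forming a geometric series'' is not justified. Concretely: after a downward exit the walk sits at $-\theta N_{s}(v)$, where the up–probability is essentially $1/2$ (and strictly larger just below); it can cross back and forth across this level many times. Each upward crossing restarts your supermartingale bound from level $\approx -\theta N_{C/p}(v)=-\Omega(C)$, contributing $\lambda^{-\Omega(C)}$, but the number of such crossings is only bounded by the total number of steps, which is of order $N_n(v)\sim np\to\infty$. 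So the naive sum is $np\cdot\lambda^{-\Omega(C)}$, which does not tend to zero since $C$ is merely a large constant. Getting a geometric series would require showing that consecutive exit levels $-\theta N_{s_m}(v)$ drop by a fixed factor, i.e.\ that $N_s(v)$ grows geometrically between excursions; nothing in your setup guarantees this.

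The paper sidesteps this by choosing a different barrier, namely the line $(1-\zeta)(2\beta-1)j$ in the walk's own clock $j$. The key computation is that whenever $Z_j>(1-\zeta)(2\beta-1)j$ one has $S^+(v,\cdot)\ge(1+\xi)\beta\,S(v,\cdot)$ --- that is, the vertex has so far received a \emph{disproportionately excitatory} share of its signals --- and therefore the pool of not–yet–arrived signals is biased inhibitory, giving $q_j\le\beta+\xi$ (not merely $\le\tfrac12-\delta$). One then takes, for each $i\ge C_0$, the event that $i$ is the last step at or below the line before the walk hits $k$; gambler's ruin with bias $\beta+\xi$ bounds this by $\bigl(\tfrac{\beta+\xi}{1-\beta-\xi}\bigr)^{(1-\zeta)(1-2\beta)i+k}$, and the union bound over $i\ge C_0$ is a genuine geometric series in $i$. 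This is what replaces your unresolved excursion iteration.
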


\begin{proof}
  If $\tau= 1/(1+\gamma)$, the statement is trivial since then $\beta = 1/2$. So assume that $\tau > 1/(1+\gamma)$ and thus $\beta < 1/2$.

  Let $\zeta \in (0,1)$ be arbitrarily small, but fixed. Let $\xi=\xi(\zeta)>0$ be sufficiently small (to be fixed below),
  and let $C_0= C_0(\zeta)>0$ be sufficiently large (so that we can apply~Lemma~\ref{lemma:walks} and the inequalities below hold). Lastly, assume that $n$ is sufficiently large. Let $V_0$ be a set of  $n-\zeta n$ vertices such that
  \begin{equation}\label{eq:ugly} N^+_s(v) \leq (1+\xi)\beta N_s(v) \end{equation}
  holds for every $v\in V_0$ and $s$ such that $S(v,t_{s})>0$.
  If $n$ is large enough,
  then such a set exists with probability at least $1-\zeta$, by
  Lemma~\ref{lemma:endgame} (i) and (ii).

  Recall that by Lemma~\ref{lemma:walks}, for every $\zeta >0$, for sufficiently large $n$ with probability at least $1-\zeta$ at most $\beta^kn/(1-\beta)^k + \zeta n$ vertices get activated by their first $C_0$ incoming signals. Thus, we only need to to show that there are few vertices that get activated later than by the first $C_0$ signals. More precisely, we will show that for large enough $n$, with probability at least $1-3\zeta$ there are at most $3\zeta n$ vertices $v$ for which there is $i>C_0$ such that $Z_i^{(v)} = k$.

  Again by Lemma~\ref{lemma:walks} with probability $1-\zeta$ there is a set $U_0\subseteq V_0$ of size $|U_0| \geq (1-2\zeta)n$ such that $Z^{(v)}_{C_0} \leq (1-\zeta)(2\beta-1)C_0$ for all $v \in U_0$, for sufficiently large $n$. The proof will be complete
  if we show that for every $v\in U_0$, the probability that $Z_i^{(v)}=k$
  holds for some $i>C_0$ is at most $\zeta^2$ (the statement then follows from Markov's inequality
  and the requirement that only $\start\ll n$ vertices are active initially).
  Given some $v\in V_0$, write $\mathcal A_{i,i^*}^{(v)}$ for the event that
  \begin{enumerate}[(i)]
    \item $Z_i^{(v)}\leq (1-\zeta)(2\beta-1)i$,
    \item $Z_{i^*}^{(v)}= k$, and
    \item $Z_j^{(v)}> (1-\zeta)(2\beta-1)j$ for all $i< j < i^*$.
  \end{enumerate}
  Then it suffices to show that $\Pr[\bigcup_{i^*> i\geq C_0}\mathcal A_{i,i^*}^{(v)}]\leq \zeta^2$. We first show that for every $v\in U_0$ and $i\geq C_0$, we have
  \[ \Pr\Big[\bigcup_{i^*> i} \mathcal A_{i,i^*}^{{(v)}}\Big]\leq \left(\frac{\beta+\xi}{1-\beta-\xi}\right)^{(1-\zeta)(1-2\beta)i+k}.\]
  For this, assume that $j > C_0$ is such that $Z_{j}^{(v)}>(1-\zeta)(2\beta-1)j$. Let $t$ be any time
  at which $v$ has seen exactly $j-1$ signals and assume that the $j$-th signal arrives
  in $v$ at time $t^*\in (t_s,t_{s+1}]$.
  In the following, condition on the value of $s$.
  By \eqref{eq:ugly}, we know that $N^+_s(v)\leq (1+\xi)\beta N_s(v)$.
  Moreover, from $Z_{j}^{(v)} =2 S^+(v,t^*)-j$ and our assumption on $Z_{j}^{(v)}$ we get 
  \[S^+(v,t)\geq S^+(v,t^*)-1\geq \beta j -\zeta(2\beta-1)j/2-1
  \geq (1+\xi)\beta S(v,t),\]
  where for the last inequality, we used the facts that $S(v,t)=j-1$, that $2\beta -1$ is negative, that $j> C_0$, where $C_0$ is sufficiently large, and the fact that we may choose $\xi=\xi(\zeta)$ to be sufficiently small.
  Then we have
  \[ \Pr[ Z^{(v)}_{j+1}-Z^{(v)}_j >0] = \frac{N^+_{s}(v)-S^+(v,t)}{N_{s}(v)-S(v,t)}
  \leq \frac{(1+\xi)\beta (N_{s}(v)-S(v,t))}{N_{s}(v)-S(v,t)}
  \leq \beta +\xi\text.\]
  By Lemma~\ref{lemma:random-walk-app}, this means that for every $i\geq C_0$ such that
  (i) holds, the probability that there exists some $i^*$ for which (ii) and (iii) hold is at most
  \[ \left(\frac{\beta+\xi}{1-\beta-\xi}\right)^{(1-\zeta)(1-2\beta)i+k}\text,\]
  where, since $\beta<1/2$, we can assume that $(\beta+\xi)/(1-\beta-\xi)< 1$.
  For large enough $C_0$, the union bound yields
  \[\Pr\Big[\bigcup_{i^*>i\geq C_0}\mathcal A_{i,i^*}\Big] \leq \sum_{i\geq C_0} \left(\frac{\beta+\xi}{1-\beta-\xi}\right)^{(1-\zeta)(1-2\beta)i}\leq \zeta^2,\]
  completing the proof.
\end{proof}

\begin{proof}[Proof of Theorem~\ref{thm:async-percolation}]
  The statement for $\tau\geq1/(1+\gamma)$ is an immediate consequence of Lemmas~\ref{lemma:walks} and~\ref{lemma:upperbound}.

  For the case $\tau < 1/(1+\gamma)$,
  we know by Lemma~\ref{lemma:walks} that if $T=T(\eps)$ is sufficiently large, then
  $s=n-o(n)$ vertices are active at time $T$, which takes
  care of the first part of this case.

  For the case $\tau<1/(1+\gamma)$ and  $p=\omega(\log n/n)$, the Chernoff and union bounds show that
  a.a.s.~every vertex $v\in [n]$ is such that $N^+_s(v)=(1+o(1))(1-\tau)pn$ and $N^-_n(v)=(1+o(1))\tau\gamma p n$.
  Using again the Chernoff and union bounds,
  within a time period of  constant length depending on $\tau<1/(1+\gamma)$,
  every vertex receives at least $|N_n^-(v)|+k$ excitatory signals and becomes active.
\end{proof}

\bibliographystyle{abbrv}
\bibliography{percolation}

\end{document}